\documentclass[11pt,a4paper]{amsart}

\usepackage{graphicx}
\usepackage{xcolor}
\usepackage{amssymb}
\usepackage{physics}
\usepackage{mathtools}
\usepackage{booktabs}

\usepackage[
    margin=25mm,
    bottom=30mm
]{geometry}

\usepackage{hyperref}
\usepackage{orcidlink}

\hypersetup{
    hidelinks,
    bookmarksnumbered=true,
    pdftitle={Parameter Estimation for Diffusive Stochastic Master Equations in Continuously Observed Quantum Systems},
    pdfauthor={Mitsuki Kobayashi, Shohei Nakajima},
    pdfkeywords={parameter estimation, stochastic master equations, homodyne detection, consistency, asymptotic normality},
}

\theoremstyle{plain}
\newtheorem{theorem}{Theorem}[section]
\newtheorem{proposition}[theorem]{Proposition}
\newtheorem{lemma}[theorem]{Lemma}

\theoremstyle{definition}
\newtheorem{assumption}{Assumption}

\theoremstyle{remark}
\newtheorem*{remark}{Remark}
\newtheorem{numericalremark}{Remark}[section]

\mathtoolsset{showonlyrefs,showmanualtags}
\numberwithin{equation}{section}
\usepackage{zref-clever}
\zcsetup{S,nameinlink=true}

\AddToHook{env/proposition/begin}{\zcsetup{countertype={theorem=proposition}}}
\AddToHook{env/lemma/begin}{\zcsetup{countertype={theorem=lemma}}}
\AddToHook{env/corollary/begin}{\zcsetup{countertype={theorem=corollary}}}

\zcRefTypeSetup{assumption}{
    Name-sg=Assumption,
    Name-pl=Assumptions,
    name-sg=assumption,
    name-pl=assumptions
}
\zcRefTypeSetup{theorem}{
    Name-sg=Theorem,
    Name-pl=Theorems,
    name-sg=theorem,
    name-pl=theorems
}
\zcRefTypeSetup{proposition}{
    Name-sg=Proposition,
    Name-pl=Propositions,
    name-sg=proposition,
    name-pl=propositions
}
\zcRefTypeSetup{lemma}{
    Name-sg=Lemma,
    Name-pl=Lemmas,
    name-sg=lemma,
    name-pl=lemmas
}
\zcRefTypeSetup{corollary}{
    Name-sg=Corollary,
    Name-pl=Corollaries,
    name-sg=corollary,
    name-pl=corollaries
}
\zcRefTypeSetup{numericalremark}{
    Name-sg=Remark,
    Name-pl=Remarks,
    name-sg=remark,
    name-pl=remarks
}

\usepackage{etoolbox}

\newcommand{\Real}{\operatorname{Re}}

\newcommand{\Lvec}{\mathbf{L}}

\NewDocumentCommand{\Wvec}{o}{%
    \mathbf{W}%
    \IfValueT{#1}{^{(#1)}}%
}

\NewDocumentCommand{\Yvec}{o}{%
    \mathbf{Y}%
    \IfValueT{#1}{^{(#1)}}%
}

\newcommand{\Wbar}{\overline{\mathbf{W}}^N}
\newcommand{\Ybar}{\overline{\mathbf{Y}}^N}

\DeclarePairedDelimiterX{\innerp}[1]{\langle}{\rangle}{\innpargs{#1}}

\NewDocumentCommand{\innpargs}{>{\SplitArgument{1}{,}}m}{\innpargsaux#1}

\NewDocumentCommand{\innpargsaux}{mm}{%
    \ifblank{#1}{%
        \ifblank{#2}
            {{\,\cdot\,}{,}{\,\cdot\,}}%
            {{\,\cdot\,}{,}{\mkern2mu#2}}%
    }{%
        {{#1\mkern2mu}{,}\ifblank{#2}{\,\cdot\,}{\mkern2mu#2}}%
    }%
}

\NewDocumentCommand{\innerpHS}{s m}{%
    \IfBooleanTF{#1}
        {\innerp*{#2}_{\mathrm{HS}}}
        {\innerp{#2}_{\mathrm{HS}}}%
}

\DeclarePairedDelimiterX{\bformparen}[1]{(}{)}{\innpargs{#1}}

\NewDocumentCommand{\bform}{s m}{%
    b%
    \IfBooleanTF{#1}
        {\bformparen*{#2}}
        {\bformparen{#2}}%
}

\NewDocumentCommand{\rhosol}{>{\SplitArgument{2}{,}}m}{\rhosolaux#1}

\NewDocumentCommand{\rhosolaux}{m m m}{%
    \rho_{#1}^{#2\IfNoValueF{#3}{,(#3)}}%
}

\NewDocumentCommand{\rhobar}{>{\SplitArgument{2}{,}}m}{\rhobaraux#1}

\NewDocumentCommand{\rhobaraux}{m m m}{%
    \bar{\rho}_{#1}^{#2\IfNoValueF{#3}{,#3}}%
}

\NewDocumentCommand{\rhoave}{>{\SplitArgument{1}{,}}m}{\rhoaveaux#1}

\NewDocumentCommand{\rhoaveaux}{m m}{%
    \bar{\rho}_{#1}^{#2}%
}

\DeclareMathOperator*{\argmax}{arg\,max}
\newcommand{\convas}{\xrightarrow{\mathrm{a.s.}}}
\newcommand{\convp}{\xrightarrow{P}}

\begin{document}

\title[Parameter Estimation for Diffusive SMEs]{Parameter Estimation for Diffusive Stochastic Master Equations in Continuously Observed Quantum Systems}

\author[Mitsuki Kobayashi]{Mitsuki Kobayashi\,\orcidlink{0000-0003-0083-3187}}
\address{Faculty of Science and Technology, Seikei University}
\email{mitsuki-kobayashi@st.seikei.ac.jp}

\author[Shohei Nakajima]{Shohei Nakajima\,\orcidlink{0000-0002-0407-7625}}
\address{Faculty of Management, Tokyo University of Science}
\email{snakajima@rs.tus.ac.jp}

\subjclass[2020]{Primary 62M05; Secondary 62M20, 62F12, 60H10, 81P15}
\keywords{parameter estimation, stochastic master equations, homodyne detection, consistency, asymptotic normality}

\begin{abstract}
    Continuous measurement of quantum systems gives rise to stochastic dynamics of
    the conditional quantum state, described by diffusive stochastic master
    equations. In this paper, we study parameter estimation for such equations
    when the Hamiltonian and measurement operators depend on unknown parameters.
    Based on multiple independent observed trajectories with a known initial state, we construct a
    contrast function using the deterministic averaged state and define a maximum
    contrast estimator for the unknown parameter. We prove strong consistency and
    asymptotic normality of the parameter estimator in a fixed-time,
    many-trajectory asymptotic regime. A key point is that the covariance matrix
    appearing in the asymptotic normality is given in a form that naturally leads
    to a consistent covariance estimator. This covariance estimator is
    computable from the observed data together with the deterministic averaged
    dynamics, so the asymptotic normality result can be used to construct
    standard errors and assess uncertainty for the parameter estimator.
\end{abstract}

\maketitle

\section{Introduction}

We consider the stochastic master equation on the time interval \([0,T]\):
\begin{equation}
    d\rhosol{t,\theta}
    =
    \mathcal{L}^{\theta}\bigl(\rhosol{t,\theta}\bigr)\,dt
    +
    \mathcal{H}^{\beta}\bigl(\rhosol{t,\theta}\bigr)\cdot d\Wvec_t,
    \qquad
    \theta\in\Theta.
    \label{eq:sMasterEq}
\end{equation}
Here, \(\Theta=\overline{\Theta^\circ}\), where \(\Theta^\circ\) is a bounded convex open subset of a finite-dimensional Euclidean space. We write \(\theta=(\alpha,\beta)\), where \(\alpha\) and \(\beta\) are the parameters entering the Hamiltonian and the measurement operators, respectively, and each component is vector-valued. The true value is denoted by \(\theta_0=(\alpha_0,\beta_0)\in\Theta^\circ\). The process \(\rhosol{t,\theta}\) is a density matrix in \(M_d(\mathbb{C})\), and
\(\Wvec=(\Wvec_t)_{0\leq t\leq T}\), with \(\Wvec_t=(W_t^{(1)},\dots,W_t^{(r)})^\top\), is an \(r\)-dimensional standard Wiener process. The drift term is
\begin{equation}
    \mathcal{L}^{\theta}(\rho)
    :=
    -i[H^\alpha,\rho]
    +
    \sum_{k=1}^{r}
    \left(
        L_k^\beta\rho(L_k^\beta)^*
        -
        \frac{1}{2}
        \left\{
            (L_k^\beta)^*L_k^\beta,\rho
        \right\}
    \right),
\end{equation}
where the Hamiltonian \(H^\alpha\in M_d(\mathbb{C})\) is self-adjoint and
\(L_1^\beta,\dots,L_r^\beta\in M_d(\mathbb{C})\) are measurement operators. For \(A,B\in M_d(\mathbb{C})\),
\begin{equation}
    [A,B]
    :=
    AB-BA,
    \qquad
    \{A,B\}
    :=
    AB+BA.
\end{equation}
The stochastic term is interpreted componentwise as
\begin{equation}
    \mathcal{H}^{\beta}(\rho)\cdot d\Wvec_t
    =
    \sum_{k=1}^{r}
    \left(
        L_k^\beta\rho
        +
        \rho(L_k^\beta)^*
        -
        \Tr\!\left(
            L_k^\beta\rho
            +
            \rho(L_k^\beta)^*
        \right)\rho
    \right)dW_t^{(k)}.
\end{equation}

Equations of the form \eqref{eq:sMasterEq} arise naturally in quantum filtering theory for continuously monitored quantum systems \cite{barchielli2009quantum,barchielli1995constructing,belavkin1989new, belavkin1992quantum,bouten2007introduction,wiseman2010quantum}. The deterministic part \(\mathcal{L}^{\theta}\) is of Lindblad type and describes the averaged open-system dynamics generated by the Hamiltonian and the measurement operators. The structure of such generators goes back to the theory of quantum dynamical semigroups \cite{lindblad1976generators}. The stochastic part describes the measurement back-action and is driven by the innovation process associated with the observation record.

More explicitly, if
\begin{equation}
    \Yvec_t
    :=
    (Y_t^{(1)},\dots,Y_t^{(r)})^\top
\end{equation}
denotes the observed measurement record, then its \(k\)-th component is formally written as
\begin{equation}
    dY_t^{(k)}
    =
    \Tr\!\left(
        L_k^\beta\rhosol{t,\theta}
        +
        \rhosol{t,\theta}(L_k^\beta)^*
    \right)dt
    +
    dW_t^{(k)},
    \qquad
    k=1,\dots,r.
    \label{eq:innovation}
\end{equation}
The stochastic master equation \eqref{eq:sMasterEq} describes the evolution of the
conditional state \(\rhosol{t,\theta}\) given the observation record
\((\Yvec_s)_{0\leq s\leq t}\), with the corresponding innovation process as the driving noise.
For the physical background of the unit-noise normalization in
\eqref{eq:innovation}, see, \emph{e.g.}, \cite[Eq.~(5.5)]{bouten2007introduction}.

Identifying quantum mechanical models with unknown parameters from observational data is a fundamental and practical problem in quantum information science. In many experimental settings, precise knowledge of system parameters, such as Hamiltonian parameters, measurement efficiencies, and coupling strengths, is unavailable, although these quantities determine the behavior of quantum devices. Since experimentally realized systems generally interact with their environments, a statistical theory is needed for estimating parameters of open quantum dynamics from actual measurement records.

An early contribution to the statistical identification of continuously monitored open quantum systems is due to Mabuchi \cite{mabuchi1996dynamical}, who studied estimation of Hamiltonian parameters from measurement records. Since then, parameter estimation for continuously monitored quantum systems has often been studied through likelihood, Bayesian, filtering-based, or recursive numerical procedures. See, \emph{e.g.}, \cite{gambetta2001state, gammelmark2013bayesian,kiilerich2016bayesian,ralph2017multiparameter} and the references therein. 

 A characteristic feature of continuous monitoring is that information about the unknown parameter is accumulated over time through the measurement record.  At the same time, the observation equation \eqref{eq:innovation} is not a standard diffusion model with a drift given explicitly as a function of the parameter. Its drift is determined through the conditional state, which is itself obtained from the stochastic master equation driven by the same observed record. Consequently, a full likelihood for the observation record must be evaluated together with the quantum filter, or equivalently with the corresponding unnormalized filtering equation. This feature distinguishes the problem from standard parameter estimation for classical diffusion processes, where the drift coefficient is usually specified directly as a function of the state and the parameter. See, \emph{e.g.}, \cite{kutoyants2004statistical,prakasa1999statistical}.

Motivated by these considerations, the present work focuses on the large-sample statistical properties of a parameter estimator constructed from multiple independent trajectories. We consider a setting in which \(N\) independent realizations of the measurement record are obtained under the same experimental configuration, where the initial state \(\rho_0\) is part of the experimental preparation and is therefore known. Although this setting requires repeated experiments and therefore entails an experimental cost, it provides a natural large-sample regime in which consistency, asymptotic normality, and asymptotic covariance can be studied rigorously. A further advantage of the proposed estimator is its computational simplicity. In the full likelihood approach, the drift of the observation process is evaluated through the conditional state, and hence the quantum filtering equation has to be solved along each observed trajectory and for each candidate value of the parameter. This can be computationally demanding, especially when numerical optimization over the parameter space is required. The proposed estimator is constructed by replacing the filter-dependent drift by a quantity computed from the averaged state \(\rhoave{t,\theta}\). See \eqref{eq:mce} below for its precise definition. It is not necessary to solve the stochastic master equation for each measurement record. Instead, one only needs to solve the deterministic master equation for the averaged dynamics. This substantially reduces the computational cost. We also derive a covariance formula in the limiting normal distribution and show that it leads to a consistent empirical covariance estimator computed from the observed trajectories and the deterministic averaged dynamics. Hence the asymptotic theory goes beyond a formal normal approximation: it provides standard errors and a componentwise Studentized statistic for uncertainty quantification. We also consider estimation from discretely sampled measurement records and establish the corresponding asymptotic properties as the sampling mesh is refined.

The rest of the paper is organized as follows. Section~\ref{sec:preliminaries} collects the notation and assumptions used throughout the paper. 
Section~\ref{sec:main-results} presents our main results.
Section~\ref{sec:proof-main-results} is devoted to the proofs. The proof begins with the differential structure of the contrast function and several auxiliary convergence lemmas for the averaged observation. These lemmas lead to the uniform convergence of the contrast and therefore to strong consistency. 
 Section~\ref{sec:numerical-example} presents a numerical illustration for a standard continuously monitored qubit model and describes the accompanying implementation. Appendix~A contains auxiliary regularity and uniform vanishing lemmas.

\section{Preliminaries}
\label{sec:preliminaries}

\subsection{Notation}
\paragraph{(N1)}
We use \(\abs{\cdot}\) for the Euclidean norm of vectors. When the
ambient Euclidean space is to be made explicit, we write
\(\norm{\cdot}_{\mathbb{R}^q}\) for the Euclidean norm on \(\mathbb{R}^q\).
We use \(\norm{\cdot}_{\mathrm{HS}}\) and \(\norm{\cdot}_{\mathrm{op}}\) for the
Hilbert--Schmidt and operator norms of matrices, respectively. We denote by \(\innerpHS{,}\) the
Hilbert--Schmidt inner product on \(M_d(\mathbb{C})\):
\begin{equation*}
    \innerpHS{A,B}
    :=
    \Tr(A^*B),
    \qquad
    A,B\in M_d(\mathbb{C}).
\end{equation*}
We also define the associated real bilinear form by
\begin{equation*}
    \bform{A,B}
    :=
    2\Real\innerpHS{A,B},
    \qquad
    A,B\in M_d(\mathbb{C}).
\end{equation*}

\paragraph{(N2)}
For any \(a,b\geq0\), the notation \(a\lesssim b\) means that
\(a\leq Cb\) for some constant \(C>0\) independent of the relevant
varying parameters.

\paragraph{(N3)}
We write
\begin{equation*}
    \Lvec^\beta
    :=
    (L_1^\beta,\dots,L_r^\beta)
    \in M_d(\mathbb{C})^r,
\end{equation*}
and, for \(\rho\in M_d(\mathbb{C})\),
\begin{equation*}
    \bform{\Lvec^\beta,\rho}
    :=
    \bigl(
        \bform{L_1^\beta,\rho},
        \dots,
        \bform{L_r^\beta,\rho}
    \bigr)^\top
    \in\mathbb{R}^r.
\end{equation*}
For \(k=1,\dots,r\), let
\begin{equation*}
    \mathcal H_k^\beta(\rho)
    :=
    L_k^\beta\rho
    +
    \rho(L_k^\beta)^*
    -
    \bform{L_k^\beta,\rho}\rho,
\end{equation*}
and write
\begin{equation*}
    \mathcal H^\beta(\rho)
    :=
    \bigl(
        \mathcal H_1^\beta(\rho),
        \dots,
        \mathcal H_r^\beta(\rho)
    \bigr).
\end{equation*}

\paragraph{(N4)}
For \(i=1,\dots,N\), let \(\rhosol{t,\theta,i}\) be the \(i\)-th independent solution of 
\begin{equation}
\label{eq:sample_path}
    d\rhosol{t,\theta,i}
    =
    \mathcal L^\theta\bigl(\rhosol{t,\theta,i}\bigr)\,dt
    +
    \mathcal H^\beta\bigl(\rhosol{t,\theta,i}\bigr)
    \cdot d\Wvec[i]_t,
\end{equation}
where
\begin{equation*}
    \Wvec[i]_t
    :=
    (W_t^{(i,1)},\dots,W_t^{(i,r)})^\top,
    \qquad
    i=1,\dots,N,
\end{equation*}
and \(\Wvec[1],\dots,\Wvec[N]\) are independent \(r\)-dimensional standard
Wiener processes on \([0,T]\).

Under the parameter \(\theta\), the corresponding measurement record is
denoted by
\begin{equation*}
    \Yvec[i]_t
    :=
    \bigl(
        Y_t^{(i,1)},\dots,Y_t^{(i,r)}
    \bigr)^\top,
    \qquad
    i=1,\dots,N,
\end{equation*}
and satisfies, componentwise,
\begin{equation*}
    dY_t^{(i,k)}
    =
    \bform{L_k^\beta,\rhosol{t,\theta,i}}\,dt
    +
    dW_t^{(i,k)},
    \qquad
    k=1,\dots,r.
\end{equation*}

Results on existence and uniqueness of the solution
\eqref{eq:sample_path} can be found in
\cite{barchielli2009quantum,barchielli1995constructing,pellegrini2010markov}.

\begin{theorem}\label{thm:existence-sme}
Let \(\rho_0\in M_d(\mathbb{C})\) be a density operator. Then, for every
\(\theta\in\Theta\) and \(i=1,\dots,N\), equation
\eqref{eq:sample_path} with initial condition
\(\rhosol{0,\theta,i}=\rho_0\) admits a unique solution.
Moreover, \(\rhosol{t,\theta,i}\) is a density operator for all
\(t\geq0\).
\end{theorem}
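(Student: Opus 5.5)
The plan is the standard linearization argument for diffusive stochastic master equations (cf. \cite{barchielli2009quantum,pellegrini2010markov}). The coefficients \(\mathcal L^\theta\) and \(\mathcal H^\beta\) are only locally Lipschitz, because of the cubic term \(\bform{L_k^\beta,\rho}\rho\). So instead of solving \eqref{eq:sample_path} directly, I would first solve the \emph{linear} (unnormalized) equation
\begin{equation*}
    d\sigma_t
    =
    \mathcal L^\theta(\sigma_t)\,dt
    +
    \sum_{k=1}^r\bigl(L_k^\beta\sigma_t+\sigma_t(L_k^\beta)^*\bigr)\,dY^{(i,k)}_t,
    \qquad
    \sigma_0=\rho_0,
\end{equation*}
driven by a process \(\Yvec[i]\) that is a standard Wiener process under a reference measure \(Q\). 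This equation has globally Lipschitz linear coefficients on the finite-dimensional space \(M_d(\mathbb C)\), so it has a unique strong solution. Next I would check the structural properties of \(\sigma_t\).
\begin{itemize}
    \item Self-adjointness is preserved, since the generator maps self-adjoint matrices to self-adjoint matrices.
    \item Positivity is preserved. One way is to write \(\sigma_t=\Psi_t\rho_0\Psi_t^*\) summed over a Kraus-type representation, or to approximate by a Trotter/Euler scheme whose one-step maps are completely positive, of the form \(\rho\mapsto M\rho M^*+\sum_k L_k\rho L_k^*dt\) with \(M=I+(-iH-\tfrac12\sum L_k^*L_k)dt+\sum L_k\,dY^{(k)}\). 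Alternatively, for pure \(\rho_0\), one can use the linear stochastic Schrödinger equation and then average over a mixture.
    \item Itô's formula gives \(d\Tr\sigma_t=\sum_k\bform{L_k^\beta,\sigma_t}\,dY^{(i,k)}_t\), because \(\Tr\mathcal L^\theta(\sigma)=0\). Hence \(\Tr\sigma_t\) is a positive \(Q\)-martingale; it is positive because it is a stochastic exponential of \(\int\bform{L_k^\beta,\rho_s}dY_s^{(k)}\), where \(\rho_s=\sigma_s/\Tr\sigma_s\).
\end{itemize}

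I then define \(\rho_t:=\sigma_t/\Tr\sigma_t\). A direct Itô computation on the quotient shows that \(\rho_t\) satisfies \eqref{eq:sample_path} with \(\Wvec[i]_t:=\Yvec[i]_t-\int_0^t\bform{\Lvec^\beta,\rho_s}\,ds\). Since \(\abs{\bform{L_k^\beta,\rho_s}}\le 2\norm{L_k^\beta}_{\mathrm{op}}\) is bounded, Novikov's condition holds. So \(dP=\Tr\sigma_T\,dQ\) defines a probability measure, and by Girsanov \(\Wvec[i]\) is a standard Wiener process under \(P\). This yields a weak solution that is a density operator for all \(t\). To obtain a solution driven by a \emph{given} Wiener process, I would instead argue directly on the SDE.

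For that direct argument, I truncate the coefficients outside a ball containing the set of density matrices, so that they become globally Lipschitz. The truncated equation has a unique strong solution up to the exit time from the ball. The key point is to show that the truncated solution never leaves the compact convex set \(\mathcal S\) of density matrices. Then the truncation is inactive, the solution is global, and pathwise uniqueness follows from local Lipschitz continuity via a standard stopping argument. To show invariance of \(\mathcal S\), I would use the linearization above with pathwise uniqueness: run the linear equation driven by \(Y=W+\int\bform{\Lvec^\beta,\rho^{\mathrm{trunc}}_s}\,ds\), which is legitimate on \([0,\tau]\) by Girsanov. Uniqueness then identifies \(\rho^{\mathrm{trunc}}\) with \(\sigma/\Tr\sigma\) on that interval. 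Trace one and self-adjointness also follow directly from Itô's formula: \(d(\Tr\rho_t-1)\) is a linear SDE in \(\Tr\rho_t-1\) with zero initial value.

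\textbf{Main obstacle.} I expect the preservation of positivity to be the hardest step. It is the only property not visible from a linear identity. My first attempt would be the completely positive Euler-scheme approximation for the linear equation, combined with convergence of the scheme in \(L^2\) and closedness of the positive cone.
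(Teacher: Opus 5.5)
Your proposal is correct, and it is essentially the standard reference-probability argument from the works the paper cites: linear SME, positivity, normalization with Girsanov, then truncation plus pathwise uniqueness of the normalized equation driven by $\Yvec[i]$, which gives a strong solution for a given $\Wvec[i]$; the paper itself offers only those citations in place of a proof. One slip: your Euler step $\rho\mapsto M\rho M^*+\sum_k L_k^\beta\rho(L_k^\beta)^*\,dt$ double-counts, because $M\rho M^*$ alone already produces $\sum_k L_k^\beta\rho(L_k^\beta)^*\,dt$ in the limit via $\sum_{k,l}L_k^\beta\rho(L_l^\beta)^*\Delta Y^{(k)}\Delta Y^{(l)}$, so either keep only $M\rho M^*$ or, more simply, use the linear stochastic Schr\"odinger route you mention (pure $\rho_0$, then the spectral decomposition of $\rho_0$ and linearity), which makes positivity immediate rather than the main obstacle.
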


\paragraph{(N5)}
We define
\begin{equation*}
    \rhobar{t,\theta,N}
    :=
    \frac{1}{N}
    \sum_{i=1}^N
    \rhosol{t,\theta,i},
    \qquad
    \Wbar_t
    :=
    \frac{1}{N}
    \sum_{i=1}^N
    \Wvec[i]_t,
    \qquad
    \Ybar_t
    :=
    \frac{1}{N}
    \sum_{i=1}^N
    \Yvec[i]_t.
\end{equation*}
By averaging \eqref{eq:sample_path}, we have
\begin{equation*}
    d\rhobar{t,\theta,N}
    =
    \mathcal L^\theta\bigl(\rhobar{t,\theta,N}\bigr)\,dt
    +
    \frac{1}{N}
    \sum_{i=1}^N
    \mathcal H^\beta\bigl(\rhosol{t,\theta,i}\bigr)
    \cdot d\Wvec[i]_t.
\end{equation*}
We denote the theoretical average state by
\begin{equation*}
    \rhoave{t,\theta}
    :=
    \mathbb E\left[
        \rhosol{t,\theta,i}
    \right],
\end{equation*}
which is independent of \(i\). We further define
\begin{equation}
\label{eq:h-definition}
    h(t,\theta)
    :=
    \bform{\Lvec^\beta,\rhoave{t,\theta}}
    =
    \mathbb E\left[
        \bform{\Lvec^\beta,\rhosol{t,\theta,i}}
    \right].
\end{equation}

\subsection{Assumptions}

\begin{assumption}\label{ass:parameter-space}
Let \(\Theta^\circ\) be a bounded convex open subset of a finite-dimensional
Euclidean space, and set \(\Theta=\overline{\Theta^\circ}\). We assume that
\(\Theta\) is compact and that the true parameter \(\theta_0\) belongs to
\(\Theta^\circ\). We write \(\theta=(\alpha,\beta)\) and
\(\theta_0=(\alpha_0,\beta_0)\), where \(\alpha\) and \(\beta\) represent the
parameters entering the Hamiltonian and the measurement operators,
respectively. 
\end{assumption}

\begin{assumption}\label{distinctiveness}
If \(\theta\neq\theta_0\), then
\begin{equation}
    \bform{\Lvec^\beta,\rhoave{t,\theta}}
    \neq
    \bform{\Lvec^{\beta_0},\rhoave{t,\theta_0}}
\end{equation}
for at least one value of \(t\in[0,T]\).
\end{assumption}

\begin{assumption}\label{differentiability}
 Let \(\ell\ge0\). The coefficient matrices \(H^\alpha\) and \(L^\beta\) 
 are \(\ell\)-times continuously differentiable in \(\theta\).
 (i) $\ell=1$ (ii) $\ell=2$ (iii) $\ell=3$.
\end{assumption}

\begin{remark}
Under Assumption~\ref{differentiability},
Lemma~\ref{lem:averaged-trajectory-time-regularity} gives the
corresponding parameter and time regularity of \(\rhoave{t,\theta}\) and
\(h(t,\theta)\).
\end{remark}

Let \(p\) be the dimension of the parameter space. When a single parameter
coordinate is fixed and the argument is differentiated in that coordinate, we
write \(\partial_\theta\). Thus \(\partial_\theta\) is a componentwise notation:
it represents one of the ordinary partial derivatives with respect to a
coordinate of \(\theta\), but the coordinate is not displayed. Similarly,
\(\partial_\theta^\ell\) denotes an \(\ell\)-th order componentwise derivative.

For a scalar-valued function \(f(\theta)\), we write
\begin{equation}
    \nabla_\theta f(\theta)
    :=
    \left(
        \partial_{\theta_1}f(\theta),\ldots,
        \partial_{\theta_p}f(\theta)
    \right)^\top
\end{equation}
for the gradient column vector, namely the vector obtained by collecting all
componentwise first derivatives. Its Hessian matrix is denoted by
\(\nabla_\theta^2 f(\theta)\). In contrast to \(\partial_\theta\), the notation
\(\nabla_\theta\) is used only when the whole vector of first derivatives is
needed, and \(\nabla_\theta^2\) is used when the whole second-derivative matrix
is needed. If the differentiated quantity is vector-valued, these conventions
are applied componentwise to each scalar component.

\begin{assumption}\label{positiveDefinite}
The matrix
\begin{equation}
    I(\theta_0)
    :=
    \int_0^T
        \nabla_\theta h(t,\theta_0)
        \nabla_\theta h(t,\theta_0)^{\top}
    \,dt
\end{equation}
is positive definite.
\end{assumption}

\section{Main results}
\label{sec:main-results}

\subsection{Continuously observed case}
 For each \(i=1,\ldots,N\), suppose that one realization
\(\{\Yvec[i]_t:0\leq t\leq T\}\) is available. We estimate
\(\theta\) from these \(N\) realizations. For a fixed
\(\theta\in\Theta\), Girsanov's theorem gives the Radon--Nikodym
derivative of the distribution of \(\Yvec[i]\) with respect to the
probability measure induced by \(\Wvec[i]\) as
\begin{equation}
    \exp\qty{
        \int_0^T
        \bform{\Lvec^\beta,\rhosol{t,\theta,i}}
        \cdot d\Yvec[i]_t
        -
        \frac{1}{2}
        \int_0^T
        \norm{\bform{\Lvec^\beta,\rhosol{t,\theta,i}}}_{\mathbb{R}^r}^2\,dt
    }.
\end{equation}
Since the \(N\) realizations are independent, the normalized
log-likelihood is
\begin{equation}
    \frac{1}{N}
    \sum_{i=1}^N
    \left\{
        \int_0^T
        \bform{\Lvec^\beta,\rhosol{t,\theta,i}}
        \cdot d\Yvec[i]_t
        -
        \frac{1}{2}
        \int_0^T
        \norm{\bform{\Lvec^\beta,\rhosol{t,\theta,i}}}_{\mathbb{R}^r}^2\,dt
    \right\}.
\end{equation}
However, the state trajectory
\(\rhosol{t,\theta,i}\) appearing in the likelihood is not directly
observed and has to be computed from each measurement record for each
candidate value of \(\theta\). We instead replace
\(\rhosol{t,\theta,i}\) by the averaged trajectory
\(\rhoave{t,\theta}\) and consider the contrast function
\begin{equation}\label{eq:contrast}
    \Phi_N(\theta)
    :=
    \int_0^T h(t,\theta)\cdot d\Ybar_t
    -
    \frac{1}{2}\int_0^T \norm{h(t,\theta)}_{\mathbb{R}^r}^2\,dt.
\end{equation}
We define the maximum likelihood type estimator by
\begin{equation}\label{eq:mce}
    \hat{\theta}_N
    =
    \argmax_{\theta\in\Theta}\Phi_N(\theta).
\end{equation}

The strong consistency of our estimator is given as follows.
\begin{theorem}\label{consistency}
Suppose that Assumptions~\ref{ass:parameter-space}, \ref{distinctiveness},and \ref{differentiability}(i) hold. Then
\begin{equation}
    \hat{\theta}_N\convas\theta_0,
\end{equation}
as \(N\to\infty\).
\end{theorem}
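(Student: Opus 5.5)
The plan is the standard argument for M-estimators: show that $\Phi_N$ converges almost surely, uniformly on $\Theta$, to a deterministic limit; show that this limit has a unique maximum at $\theta_0$; then conclude using compactness.

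\textbf{The limit contrast.} Write $h_0(t):=h(t,\theta_0)$. Under the true parameter,
\begin{equation*}
    d\Ybar_t
    =
    \frac1N\sum_{i=1}^N \bform{\Lvec^{\beta_0},\rhosol{t,\theta_0,i}}\,dt
    +
    d\Wbar_t .
\end{equation*}
Set $\bar h_N(t):=\frac1N\sum_{i=1}^N \bform{\Lvec^{\beta_0},\rhosol{t,\theta_0,i}}$. Then
\begin{equation*}
    \Phi_N(\theta)
    =
    \int_0^T h(t,\theta)\cdot \bar h_N(t)\,dt
    +
    \int_0^T h(t,\theta)\cdot d\Wbar_t
    -
    \frac12\int_0^T \norm{h(t,\theta)}_{\mathbb{R}^r}^2\,dt .
\end{equation*}
The candidate limit is
\begin{equation*}
    \Phi(\theta)
    :=
    \int_0^T h(t,\theta)\cdot h_0(t)\,dt
    -
    \frac12\int_0^T \norm{h(t,\theta)}_{\mathbb{R}^r}^2\,dt
    =
    \frac12\int_0^T\norm{h_0(t)}^2\,dt
    -
    \frac12\int_0^T\norm{h(t,\theta)-h_0(t)}_{\mathbb{R}^r}^2\,dt .
\end{equation*}

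\textbf{Uniqueness of the maximizer of $\Phi$.} The function $\theta\mapsto\rhoave{t,\theta}$ is continuous, uniformly in $t$. This follows from Assumption~\ref{differentiability}(i) and Lemma~\ref{lem:averaged-trajectory-time-regularity}, since $\rhoave{t,\theta}$ solves the linear Lindblad equation $\partial_t\rhoave{t,\theta}=\mathcal L^\theta(\rhoave{t,\theta})$ with $\rhoave{0,\theta}=\rho_0$. Hence $h$ is jointly continuous on $[0,T]\times\Theta$.

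Take $\theta\ne\theta_0$. Assumption~\ref{distinctiveness} gives a time $t$ with $h(t,\theta)\ne h_0(t)$, and by continuity in $t$ they differ on a neighbourhood of $t$. Therefore $\Phi(\theta)<\Phi(\theta_0)$, so $\theta_0$ is the unique maximizer. Since $\Phi$ is continuous and $\Theta$ is compact, for every $\varepsilon>0$
\begin{equation*}
    \sup_{\abs{\theta-\theta_0}\ge\varepsilon}\Phi(\theta)<\Phi(\theta_0).
\end{equation*}

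\textbf{Uniform a.s.\ convergence.} This is the main step. There are two terms to control.

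\emph{The drift term.} The summands $\bform{\Lvec^{\beta_0},\rhosol{t,\theta_0,i}}$ are i.i.d.\ in $i$ and bounded, because the states are density operators by Theorem~\ref{thm:existence-sme}. We have
\begin{equation*}
    \Bigl|\int_0^T h(t,\theta)\cdot(\bar h_N-h_0)(t)\,dt\Bigr|
    \le
    \sup_{t,\theta}\abs{h}\,\int_0^T\abs{\bar h_N(t)-h_0(t)}\,dt .
\end{equation*}
The integral $\int_0^T\abs{\bar h_N-h_0}\,dt$ tends to $0$ a.s. The simplest route is the strong law of large numbers in the Hilbert space $L^2([0,T];\mathbb R^r)$, which applies because the summands are i.i.d.\ and bounded, hence Bochner integrable. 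Alternatively, apply the SLLN pointwise in $t$ and use Fubini with dominated convergence.

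\emph{The martingale term.} The deterministic integrand $h(\cdot,\theta)$ is of bounded variation in $t$ by Lemma~\ref{lem:averaged-trajectory-time-regularity}. Integrating by parts gives
\begin{equation*}
    \int_0^T h(t,\theta)\cdot d\Wbar_t
    =
    h(T,\theta)\cdot\Wbar_T
    -
    \int_0^T \partial_t h(t,\theta)\cdot \Wbar_t\,dt .
\end{equation*}
Hence
\begin{equation*}
    \sup_\theta\Bigl|\int_0^T h(t,\theta)\cdot d\Wbar_t\Bigr|
    \lesssim
    \sup_{t\le T}\abs{\Wbar_t} .
\end{equation*}
The right-hand side tends to $0$ a.s. by the SLLN in the Banach space $C([0,T];\mathbb R^r)$. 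Alternatively, use Doob's maximal inequality with $\mathbb E\sup_t\abs{\Wbar_t}^4\lesssim N^{-2}$ and Borel--Cantelli.

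If only $h\in C^1$ in $\theta$ is available (as under (i)) and not in $t$, I would instead use the Lipschitz dependence of $h$ on $\theta$ together with a finite net of $\Theta$. At the net points, pointwise a.s. convergence holds. Between net points, one needs the bound
\begin{equation*}
    \Bigl|\int_0^T (h(t,\theta)-h(t,\theta'))\cdot d\Wbar_t\Bigr|,
\end{equation*}
which is harder; this is why I prefer the integration-by-parts route. The time regularity of $h$ in the appendix lemma is exactly what is needed there.

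\textbf{Conclusion.} Combining the two terms gives $\sup_\theta\abs{\Phi_N-\Phi}\to0$ a.s. Since $\Phi_N(\hat\theta_N)\ge\Phi_N(\theta_0)$, on this full-probability event
\begin{equation*}
    \Phi(\hat\theta_N)\ge\Phi(\theta_0)-2\sup_\theta\abs{\Phi_N-\Phi}.
\end{equation*}
The right-hand side converges to $\Phi(\theta_0)$, so the well-separatedness of the maximum forces $\hat\theta_N\to\theta_0$. A measurable selection of the argmax exists because $\Phi_N$ is continuous on the compact set $\Theta$.
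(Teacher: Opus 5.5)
Your proof is correct. Its skeleton matches the paper's: uniform a.s.\ convergence of $\Phi_N$ to $\Phi$, identifiability from Assumption~\ref{distinctiveness} plus continuity in $t$, then the argmax argument.

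The genuine difference is how you control the martingale term $\sup_\theta\bigl|\int_0^T h(t,\theta)\cdot d\Wbar_t\bigr|$.

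The paper (Lemma~\ref{lem:StochIntVanishesGeneral}) works in the parameter variable. It combines the $\theta$-derivative of $h$, the Burkholder--Davis--Gundy inequality and the Sobolev--Morrey embedding $W^{1,2m}(\Theta^\circ)\hookrightarrow C(\Theta)$. This lets it treat $\theta\mapsto\int_0^T h(t,\theta)\cdot d\Wvec[1]_t$, after passing to a continuous modification, as an integrable mean-zero $C(\Theta)$-valued random element. It then applies the strong law of large numbers in $C(\Theta)$.

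You work in the time variable instead. Because $\rhoave{t,\theta}$ solves the Lindblad ODE, $\partial_t h$ is bounded on $[0,T]\times\Theta$. Pathwise integration by parts then gives the deterministic bound $\sup_\theta\bigl|\int_0^T h\cdot d\Wbar_t\bigr|\lesssim\sup_{t\le T}|\Wbar_t|$, so everything reduces to $\sup_t|\Wbar_t|\to0$ a.s.

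What your route buys:
\begin{itemize}
\item It is more elementary, and it yields a canonical jointly continuous version of the stochastic integral without any modification argument.
\item It never uses $\theta$-differentiability. Lemma~\ref{lem:averaged-trajectory-time-regularity} with $\ell=0$ already gives joint continuity of $h$ and a Lipschitz bound in $t$ that is uniform in $\theta$. Your argument therefore proves consistency under mere continuity of $H^\alpha$ and $L^\beta$ in $\theta$, which is weaker than Assumption~\ref{differentiability}(i).
\end{itemize}

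What the paper's route buys: its lemma needs only sup-norm bounds on the integrand and its $\theta$-derivative. That is exactly the form reused in Lemma~\ref{lem:StochIntVanishesCombined} for the step integrands $h_n-h$ of the discrete case. There the sup-norm is $O(\Delta_n)$ but the total variation in $t$ is not small. A single global integration by parts would therefore lose the factor $\Delta_n$ needed for the $\sqrt{N}$-scaled comparisons, and you would have to localize it to each mesh interval.

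Your $L^2$ or Fubini treatment of the drift term, in place of the paper's strong law of large numbers in $C([0,T];\mathbb{R}^r)$, is an inessential variation.
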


For each \(\theta\in\Theta\) and $i=1,\dots,N$, define
\begin{equation}\label{eq:eta-i-theta-definition}
    \eta_i(\theta)
    :=
    \int_0^T
        \nabla_\theta h(t,\theta)
        \left\{d\Yvec[i]_t-h(t,\theta)\,dt\right\},
    \qquad 
    \Sigma
:=
E[\eta_i(\theta_0)\eta_i(\theta_0)^\top]
\end{equation}
The next theorem gives the asymptotic normality of our estimator $\hat{\theta}_N$.
\begin{theorem}\label{asymptotic normality}
Suppose that Assumptions~\ref{ass:parameter-space}, \ref{distinctiveness}, \ref{positiveDefinite}, and \ref{differentiability}(iii) hold. Then
\begin{equation}
    \sqrt{N}\left(\hat{\theta}_N-\theta_0\right)
    \xrightarrow{d}
    \mathcal{N}\left(0,I(\theta_0)^{-1}\Sigma I(\theta_0)^{-1}\right),
\end{equation}
as \(N\to\infty\).
\end{theorem}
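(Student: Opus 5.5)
We follow the classical M-estimation route: a Taylor expansion of the score \(\nabla_\theta\Phi_N\) around \(\theta_0\), with consistency from Theorem~\ref{consistency} localising the argument.

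\textbf{Step 1 (the estimating equation).} Since \(\theta_0\in\Theta^\circ\) and \(\hat\theta_N\convas\theta_0\), almost surely \(\hat\theta_N\in\Theta^\circ\) for all large \(N\). Hence \(\nabla_\theta\Phi_N(\hat\theta_N)=0\) eventually. Under Assumption~\ref{differentiability}(iii), \(h\) is \(C^3\) in \(\theta\), with bounds uniform in \(t\in[0,T]\) (Lemma~\ref{lem:averaged-trajectory-time-regularity}). We may therefore differentiate under the deterministic-integrand stochastic integral and write
\begin{equation*}
\nabla_\theta\Phi_N(\theta)=\int_0^T\nabla_\theta h(t,\theta)\{d\Ybar_t-h(t,\theta)\,dt\}=\frac1N\sum_{i=1}^N\eta_i(\theta).
\end{equation*}
A mean value expansion in each coordinate then gives
\begin{equation*}
0=\nabla_\theta\Phi_N(\theta_0)+\Bigl(\int_0^1\nabla_\theta^2\Phi_N(\theta_0+s(\hat\theta_N-\theta_0))\,ds\Bigr)(\hat\theta_N-\theta_0).
\end{equation*}
Convexity of \(\Theta^\circ\) keeps the segment inside the domain.

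\textbf{Step 2 (CLT for the score).} The \(\eta_i(\theta_0)\) are i.i.d., since the trajectories are independent and \(h\) is deterministic. We check \(E\eta_i(\theta_0)=0\) by writing \(dY^{(i)}_t-h(t,\theta_0)dt=(\bform{\Lvec^{\beta_0},\rhosol{t,\theta_0,i}}-h(t,\theta_0))dt+d\Wvec[i]_t\). The Itô integral of a deterministic bounded integrand has zero mean. The drift part has mean zero by Fubini and the definition \eqref{eq:h-definition}.

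For square integrability, density operators have bounded \(\bform{L,\rho}\), so \(\eta_i\) has all moments. The multivariate CLT then gives \(\sqrt N\nabla_\theta\Phi_N(\theta_0)\xrightarrow{d}\mathcal N(0,\Sigma)\). Note that \(\Sigma\) is not simply \(I(\theta_0)\): the drift fluctuation term is correlated with the noise, which is why the sandwich form appears.

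\textbf{Step 3 (Hessian convergence, the main obstacle).} We compute
\begin{equation*}
\nabla^2_\theta\Phi_N(\theta)=\int_0^T\nabla^2_\theta h(t,\theta)\{d\Ybar_t-h\,dt\}-\int_0^T\nabla_\theta h\,\nabla_\theta h^\top dt.
\end{equation*}
We must show that \(\sup_{\theta\in\Theta}\bigl|\int_0^T\nabla_\theta^2h(t,\theta)\,d\Ybar_t-\int_0^T\nabla^2_\theta h(t,\theta)h(t,\theta_0)\,dt\bigr|\to0\) almost surely, or at least in probability. Write \(d\Ybar_t=\bar b^N_t\,dt+d\Wbar_t\), where \(\bar b^N_t=\bform{\Lvec^{\beta_0},\rhobar{t,\theta_0,N}}\). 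The drift part converges because \(\rhobar{t,\theta_0,N}\to\rhoave{t,\theta_0}\) for each \(t\) by the SLLN, and bounded convergence handles the integral uniformly in \(\theta\) since \(\nabla^2h\) is bounded.

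For the martingale part, we integrate by parts in time to turn \(\int_0^T g(t,\theta)\,d\Wbar_t\) into \(g(T,\theta)\Wbar_T-\int_0^T\Wbar_t\,\partial_tg(t,\theta)\,dt\). This is bounded by \(C\sup_t|\Wbar_t|\to0\) a.s., uniformly in \(\theta\), provided \(\partial_t\nabla^2_\theta h\) is bounded. That time regularity is what the lemma in the Appendix provides; it is presumably the same mechanism used for the uniform contrast convergence in Theorem~\ref{consistency}. If the time regularity were unavailable, I would instead use Kolmogorov--Chentsov on the \(\theta\)-indexed field, using the \(C^3\) bound.

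Combining uniform convergence with continuity in \(\theta\) and \(\hat\theta_N\to\theta_0\), the averaged Hessian converges to \(-I(\theta_0)\). This limit is invertible by Assumption~\ref{positiveDefinite}.

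\textbf{Step 4.} On the event where the averaged Hessian is invertible, whose probability tends to one, we have \(\sqrt N(\hat\theta_N-\theta_0)=-(\text{Hessian})^{-1}\sqrt N\nabla\Phi_N(\theta_0)\). Slutsky's lemma then yields \(\mathcal N(0,I^{-1}\Sigma I^{-1})\).
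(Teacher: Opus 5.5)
Your proof is correct. It follows the paper's skeleton: localisation by consistency, the integral-form Taylor expansion, the i.i.d.\ CLT for $N^{-1/2}\sum_i\eta_i(\theta_0)$, uniform Hessian convergence, and Slutsky. Where you differ is in how you show $\sup_\theta\lvert\int_0^T\nabla_\theta^2h(t,\theta)\cdot d\Wbar_t\rvert\to0$.

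\textbf{The paper's route.} It uses Lemma~\ref{lem:StochIntVanishesGeneral}. That lemma combines Burkholder--Davis--Gundy bounds for the stochastic integral and its $\theta$-derivative, the Sobolev--Morrey embedding $W^{1,2m}(\Theta^\circ)\hookrightarrow C(\Theta)$, and the strong law of large numbers in the separable Banach space $C(\Theta)$. The drift part is handled by the analogous Banach-space SLLN in $C([0,T];\mathbb{R}^r)$, Lemma~\ref{lem:basic-uniform-convergences}. Because this approach needs one $\theta$-derivative beyond the integrand $\nabla_\theta^2h$, the paper works under Assumption~\ref{differentiability}(iii). The same mechanism, not time regularity as you guessed, drives the consistency proof.

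\textbf{Your route.} Pathwise integration by parts uses the uniform-in-$\theta$ Lipschitz bound in $t$ from Lemma~\ref{lem:averaged-trajectory-time-regularity} to get $\sup_\theta\lvert\int_0^T g(t,\theta)\cdot d\Wbar_t\rvert\le C\sup_{t\le T}\lvert\Wbar_t\rvert\to0$ a.s. This has three advantages. It is more elementary. It gives a version of $\Phi_N$ and its $\theta$-derivatives defined simultaneously for all $\theta$, which sidesteps the modification issue of Lemma~\ref{lem:differentiation-stochastic-integral}. And it appears to need only second $\theta$-derivatives of $H^\alpha,L^\beta$.

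One small point on the drift part: a pointwise SLLN plus dominated convergence needs a Fubini step to obtain a single null set valid for a.e.\ $t$. Alternatively, a second-moment bound gives convergence in probability, which is all Slutsky requires.

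\textbf{What the paper's route buys.} It controls stochastic integrals by the sup-size of the integrand and its $\theta$-derivative, not by the integrand's variation in $t$. This matters in the discretely observed case. There the integrand $\nabla_\theta h_n-\nabla_\theta h$ is uniformly $O(\Delta_n)$ but has total variation of order one in $t$. Integration by parts would then give only $\sqrt N\sup_\theta\lvert\int_0^T(\nabla_\theta h_n-\nabla_\theta h)\cdot d\Wbar_t\rvert=O_P(1)$, whereas Lemma~\ref{lem:StochIntVanishesCombined} gives the required $o_P(1)$.
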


Since \(\Sigma\) is defined in terms of the single-trajectory quantity
\(\eta_i(\theta_0)\), we estimate it by the sample covariance of its
plug-in counterparts. To estimate \(\Sigma\), define
\begin{equation}
    \hat\eta_i
    :=
    \eta_i(\hat\theta_N),
    \qquad
    \bar\eta_N
    :=
    \frac{1}{N}\sum_{i=1}^N \hat\eta_i .
\end{equation}
We then define
\begin{equation}\label{eq:Sigma-hat-definition}
    \hat\Sigma_N
    :=
    \frac{1}{N}
    \sum_{i=1}^N
    (\hat\eta_i-\bar\eta_N)
    (\hat\eta_i-\bar\eta_N)^\top .
\end{equation}

We also define the plug-in information matrix by
\begin{equation}\label{eq:I-hat-definition}
    \hat I_N
    :=
    \int_0^T
        \nabla_\theta h(t,\hat{\theta}_N)
        \nabla_\theta h(t,\hat{\theta}_N)^\top
    \,dt.
\end{equation}
Then a covariance estimator for the asymptotic covariance matrix of
\(\sqrt{N}(\hat{\theta}_N-\theta_0)\) is the sandwich matrix
\begin{equation}\label{eq:V-hat-definition}
    \hat V_N
    :=
    \hat I_N^{-1}\hat\Sigma_N\hat I_N^{-1}.
\end{equation}

\begin{theorem}\label{thm:covariance-estimator-consistency}
Suppose that Assumptions~\ref{ass:parameter-space}, \ref{distinctiveness},
\ref{positiveDefinite}, and \ref{differentiability}(ii)
hold. Then
\begin{equation}
    \hat\Sigma_N\convas\Sigma
\end{equation}
and
\begin{equation}
    \hat V_N\convas I(\theta_0)^{-1}\Sigma I(\theta_0)^{-1}.
\end{equation}
\end{theorem}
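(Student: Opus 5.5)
The plan is to write \(\hat\Sigma_N\) as the empirical covariance of \(\eta_i(\theta_0)\) plus a perturbation controlled by \(\hat\theta_N-\theta_0\). Strong consistency of \(\hat\theta_N\) (Theorem~\ref{consistency}) then kills the perturbation, and the strong law of large numbers handles the main term. Continuity of \(\theta\mapsto\nabla_\theta h(t,\theta)\) and the continuous mapping theorem give \(\hat V_N\) from \(\hat\Sigma_N\).

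\textbf{Step 1: expand \(\eta_i(\theta)\).} Since \(d\Yvec[i]_t=\bform{\Lvec^{\beta_0},\rhosol{t,\theta_0,i}}dt+d\Wvec[i]_t\), we can write
\begin{equation}
\eta_i(\theta)=\int_0^T\nabla_\theta h(t,\theta)\,d\Wvec[i]_t+\int_0^T\nabla_\theta h(t,\theta)\bigl\{\bform{\Lvec^{\beta_0},\rhosol{t,\theta_0,i}}-h(t,\theta)\bigr\}dt .
\end{equation}
The Lebesgue integrand is uniformly bounded, because \(\rhosol{t,\theta_0,i}\) is a density operator (Theorem~\ref{thm:existence-sme}). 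By Lemma~\ref{lem:averaged-trajectory-time-regularity} under Assumption~\ref{differentiability}(ii), \(\nabla_\theta h\) and \(\nabla^2_\theta h\) are bounded and continuous on \([0,T]\times\Theta\).

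To handle the stochastic integral uniformly in \(\theta\), integrate by parts in time. This needs \(\partial_t\nabla_\theta h\) bounded, which the same lemma supplies. It gives
\begin{equation}
\int_0^T\nabla_\theta h(t,\theta)\,d\Wvec[i]_t=\nabla_\theta h(T,\theta)\Wvec[i]_T-\int_0^T\partial_t\nabla_\theta h(t,\theta)\Wvec[i]_t\,dt .
\end{equation}
The result is pathwise in \(\theta\), so there are no measurability issues from plugging in the random \(\hat\theta_N\). Moreover \(|\eta_i(\theta)-\eta_i(\theta_0)|\le K_i|\theta-\theta_0|\) with \(K_i\lesssim 1+\sup_{t\le T}|\Wvec[i]_t|\), which follows from the mean value theorem using \(\nabla^2_\theta h\) and \(\partial_t\nabla^2_\theta h\) (or continuity moduli). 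The \(K_i\) are i.i.d. with all moments finite.

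\textbf{Step 2: reduce to \(\theta_0\).} Write \(\hat\Sigma_N=\frac1N\sum\hat\eta_i\hat\eta_i^\top-\bar\eta_N\bar\eta_N^\top\). Also \(|\hat\eta_i\hat\eta_i^\top-\eta_i(\theta_0)\eta_i(\theta_0)^\top|\le K_i|\hat\theta_N-\theta_0|\bigl(2|\eta_i(\theta_0)|+K_i|\hat\theta_N-\theta_0|\bigr)\). Averaging over \(i\), the SLLN gives \(\frac1N\sum K_i^2\) and \(\frac1N\sum K_i|\eta_i(\theta_0)|\) converging a.s. to finite limits, and \(\hat\theta_N\to\theta_0\) a.s. Hence both \(\frac1N\sum\hat\eta_i\hat\eta_i^\top-\frac1N\sum\eta_i(\theta_0)\eta_i(\theta_0)^\top\) and \(\bar\eta_N-\frac1N\sum\eta_i(\theta_0)\) vanish a.s.

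\textbf{Step 3: SLLN at \(\theta_0\).} The \(\eta_i(\theta_0)\) are i.i.d. with finite second moments. Their mean is zero because \(E[\bform{\Lvec^{\beta_0},\rhosol{t,\theta_0,i}}]=h(t,\theta_0)\) and the Itô integral has zero mean. So \(\frac1N\sum\eta_i\eta_i^\top\to\Sigma\) and \(\frac1N\sum\eta_i\to0\) a.s., which gives \(\hat\Sigma_N\to\Sigma\).

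\textbf{Step 4: the sandwich.} \(\hat I_N\to I(\theta_0)\) a.s. by continuity of \(\nabla_\theta h\) and dominated convergence. \(I(\theta_0)\) is invertible by Assumption~\ref{positiveDefinite}, and inversion is continuous near it, so \(\hat V_N\to I(\theta_0)^{-1}\Sigma I(\theta_0)^{-1}\) a.s.

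The main obstacle is Step 1: getting a Lipschitz bound for the stochastic integral uniformly in \(\theta\) with an integrable random constant. The integration-by-parts route is the one to try first. It requires time differentiability of \(\nabla_\theta h\), which should follow since \(\rhoave{t,\theta}\) solves the linear Lindblad ODE. If that fails, a fallback is a Kolmogorov-type continuity estimate for \(\theta\mapsto\int\nabla_\theta h\,d\Wvec[i]\).
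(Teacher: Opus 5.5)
Your proof is correct, but it follows a different route from the paper. The paper never reduces to the single point \(\theta_0\). Instead it proves uniform strong laws over all of \(\Theta\).

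\textbf{The paper's route.} For the second moments (Lemma~\ref{lem:SecondMomentUniformLLN}), it builds a continuous modification of \(B_i(\theta)=\int_0^T\nabla_\theta h(t,\theta)\,d\Wvec[i]_t\) with \(\mathbb{E}\sup_{\theta}|B_i(\theta)|^\gamma<\infty\). This uses Lemma~\ref{lem:differentiation-stochastic-integral}, Burkholder--Davis--Gundy and the Sobolev--Morrey embedding \(W^{1,\gamma}(\Theta^\circ)\hookrightarrow C(\Theta)\); this is where \(\nabla_\theta^2h\), i.e.\ Assumption~\ref{differentiability}(ii), enters. It then applies the strong law of large numbers in the separable Banach space \(C(\Theta;\mathbb{R}^{p\times p})\). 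The mean \(m_N\) is handled uniformly by Lemma~\ref{lem:basic-uniform-convergences}. Finally, \(S_N(\hat\theta_N)\to S(\theta_0)=\Sigma\) and \(m_N(\hat\theta_N)\to m(\theta_0)=0\) follow from consistency and continuity of the limits.

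\textbf{Your route.} You use that the integrand is deterministic and \(C^1\) in \(t\). It\^o's product rule then gives a pathwise, \(\theta\)-continuous version of the stochastic integral with an explicit i.i.d.\ random Lipschitz constant. After that you need only the classical SLLN at \(\theta_0\) and an elementary perturbation bound. The time derivatives you use are legitimate. Differentiating the integral equation for \(\rhoave{t,\theta}\) in \(\theta\), as in the proof of Lemma~\ref{lem:averaged-trajectory-time-regularity}, gives \(\partial_t\partial_\theta^j\rhoave{t,\theta}=\partial_\theta^j\bigl(\mathcal L^\theta(\rhoave{t,\theta})\bigr)\), which is continuous and bounded for \(j\le\ell\).

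\textbf{What each buys.} Your argument is more elementary: it avoids Sobolev embeddings and the Banach-space SLLN. If you replace the Lipschitz bounds by moduli of continuity of \(h\), \(\nabla_\theta h\) and \(\partial_t\nabla_\theta h\) on the compact set \([0,T]\times\Theta\), it appears to need only first-order parameter differentiability. The paper's argument uses no time regularity of the integrand. It also produces uniform-in-\(\Theta\) statements, such as the a.s.\ boundedness of \(\sup_\theta\frac1N\sum_i|\eta_i(\theta)|^2\), which it reuses for the discretely observed case in Proposition~\ref{prop:discrete-score-contribution-comparison}.
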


\subsection{Discretely observed case}
\label{subsec:main-results-discrete}
\label{subsec:discrete-observation-contrast}
In the previous subsection, we constructed an estimator from \(N\)
realizations on \([0,T]\) and established its asymptotic properties.
In practice, however, although the measurement is modeled in continuous
time, measurement records are acquired with finite time resolution.
We therefore consider estimation based on discretely sampled data. Let
\begin{equation}
0=t_0<t_1<\cdots<t_n=T,
\qquad
t_j=j\Delta_n,
\qquad
\Delta_n=\frac{T}{n}.
\end{equation}
Suppose that the available data are
\(\{Y_{t_j}^{(i)}:j=0,\ldots,n\}\), \(i=1,\ldots,N\). For the averaged
data, put
\begin{equation}
\Delta_j\Ybar
:=
\Ybar_{t_j}-\Ybar_{t_{j-1}},
\qquad
j=1,\ldots,n.
\end{equation}
To construct a contrast function from these discrete data, we replace
\(h(t,\theta)\) on each interval \((t_{j-1},t_j]\) by its left-endpoint
value and define
\begin{equation}\label{eq:discrete-step-h}
h_n(t,\theta)
:=
\sum_{j=1}^{n}
h(t_{j-1},\theta)\mathbf{1}_{(t_{j-1},t_j]}(t).
\end{equation}
with \(h_n(0,\theta):=h(0,\theta)\). Accordingly, we discretize the contrast function in the previous
subsection as
\begin{align}
\Phi_{N,n}(\theta)
&:=
\sum_{j=1}^{n}
h(t_{j-1},\theta)\cdot\Delta_j\Ybar
-
\frac{1}{2}
\sum_{j=1}^{n}
\norm{h(t_{j-1},\theta)}_{\mathbb{R}^r}^2\Delta_n
\notag\\
&=
\int_0^T h_n(t,\theta)\cdot d\Ybar_t
-
\frac{1}{2}\int_0^T \norm{h_n(t,\theta)}_{\mathbb{R}^r}^2\,dt.
\label{eq:discrete-contrast}
\end{align}
We define the estimator based on the discrete data by
\begin{equation}\label{eq:discrete-mce}
\hat\theta_{N,n}
:=
\argmax_{\theta\in\Theta}\Phi_{N,n}(\theta).
\end{equation}

\begin{theorem}\label{thm:discrete-consistency}
Suppose that Assumptions~\ref{ass:parameter-space},\ref{distinctiveness}
and \ref{differentiability}(i) hold. Then, as
\(N\to\infty\) and \(n\to\infty\),
\begin{equation}
    \hat\theta_{N,n}\convas\theta_0.
\end{equation}
\end{theorem}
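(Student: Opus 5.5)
We follow the standard argmax (Wald) argument used for Theorem~\ref{consistency}: show that \(\Phi_{N,n}\) converges almost surely, uniformly on \(\Theta\), to the deterministic limit contrast
\begin{equation}
    \Phi(\theta):=\int_0^T h(t,\theta)\cdot h(t,\theta_0)\,dt-\frac12\int_0^T\norm{h(t,\theta)}_{\mathbb{R}^r}^2\,dt
    =-\frac12\int_0^T\norm{h(t,\theta)-h(t,\theta_0)}_{\mathbb{R}^r}^2\,dt+\frac12\int_0^T\norm{h(t,\theta_0)}_{\mathbb{R}^r}^2\,dt .
\end{equation}
\(\Phi\) is continuous and, by Assumption~\ref{distinctiveness}, has \(\theta_0\) as its unique maximizer. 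Since \(h(\cdot,\theta)\) is continuous in \(t\) by Lemma~\ref{lem:averaged-trajectory-time-regularity}, the difference \(h(\cdot,\theta)-h(\cdot,\theta_0)\) is nonzero on an interval, so the integral is strictly positive. Given uniform convergence, compactness of \(\Theta\) yields \(\hat\theta_{N,n}\to\theta_0\) a.s.

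The plan splits the discretization error from the sampling error:
\begin{equation}
    \sup_{\theta}\abs{\Phi_{N,n}(\theta)-\Phi(\theta)}\le\sup_\theta\abs{\Phi_{N,n}(\theta)-\Phi_N(\theta)}+\sup_\theta\abs{\Phi_N(\theta)-\Phi(\theta)} .
\end{equation}
The second term tends to zero almost surely as \(N\to\infty\) by the uniform convergence already established in the proof of Theorem~\ref{consistency}.

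For the first term, write \(d\Ybar_t=\bar b^N_t\,dt+d\Wbar_t\), where \(\bar b^N_t:=\frac1N\sum_i\bform{\Lvec^{\beta_0},\rhosol{t,\theta_0,i}}\). This drift is bounded by a deterministic constant, because each \(\rhosol{t,\theta_0,i}\) is a density operator (Theorem~\ref{thm:existence-sme}). The drift parts are then controlled by
\begin{equation}
    \sup_\theta\int_0^T\abs{h_n-h}\bigl(\abs{\bar b^N_t}+\abs{h_n}+\abs{h}\bigr)dt\lesssim \sup_{\theta,\,\abs{t-s}\le\Delta_n}\abs{h(t,\theta)-h(s,\theta)}\to0,
\end{equation}
using joint continuity of \(h\) on the compact set \([0,T]\times\Theta\). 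This bound is deterministic, so it holds uniformly in \(N\).

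The remaining piece is \(\sup_\theta\abs{\int_0^T(h_n-h)(t,\theta)\cdot d\Wbar_t}\), and this is the main obstacle: it must vanish a.s. jointly in \(N,n\), uniformly in \(\theta\). I would integrate by parts (Wiener integrals with deterministic integrands), giving
\begin{equation}
    \int_0^T(h_n-h)\cdot d\Wbar_t=(h_n-h)(T,\theta)\cdot\Wbar_T-\int_0^T\Wbar_t\cdot d_t(h_n-h)(t,\theta).
\end{equation}
The total variation of \(h_n\) is at most that of \(h\), which is bounded uniformly in \(\theta\) because \(\partial_t h\) is bounded (Lemma~\ref{lem:averaged-trajectory-time-regularity}). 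Hence this term is \(\lesssim\sup_{t\le T}\abs{\Wbar_t}\), and that quantity tends to \(0\) a.s. as \(N\to\infty\) by the strong law of large numbers in \(C([0,T])\). Alternatively one can use the explicit bound \(\sup_t\abs{\Wbar_t}\overset{d}{=}N^{-1/2}\sup_t\abs{\Wvec_t}\) together with Borel--Cantelli along Gaussian tails.

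This gives the required uniform convergence and hence the theorem. If the time regularity of \(h\) were insufficient, I would fall back on the uniform vanishing lemmas of Appendix~A.
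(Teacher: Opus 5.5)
Your argument is correct. Its skeleton is the paper's: you split into $\Phi_{N,n}-\Phi_N$ and $\Phi_N-\Phi$, control the finite-variation pieces deterministically by the modulus of continuity of $h$ in $t$, appeal to the proof of Theorem~\ref{consistency} for $\Phi_N-\Phi$, and finish with the argmax step. You genuinely differ only on the stochastic term $\int_0^T\{h_n-h\}(t,\theta)\cdot d\Wbar_t$.

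The paper treats this term via Lemma~\ref{lem:discrete_vanish}(ii), that is, Lemma~\ref{lem:StochIntVanishesCombined}(ii). Sobolev--Morrey in $\theta$ together with BDG gives $\mathbb{E}\sup_\theta\abs{\cdot}^{2m}\lesssim\Delta_n^{2m}/N^m$, and Borel--Cantelli is then run over the double index using $\sum_{N,n}N^{-m}\Delta_n^{2m}<\infty$. This requires $\sup\abs{\partial_\theta(h_n-h)}\lesssim\Delta_n$, hence the time-Lipschitz property of $\partial_\theta h$ coming from Assumption~\ref{differentiability}(i).

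You instead use that the integrand is deterministic and its $t$-variation is bounded uniformly in $(n,\theta)$. Abel summation handles the step function $h_n$ and It\^o's product rule handles $h$, so the whole term is $\lesssim\sup_{t\le T}\abs{\Wbar_t}$, which vanishes a.s.\ by the SLLN in $C([0,T])$. For $h$ itself the product rule holds a.s.\ only for each fixed $\theta$, so you should take its right-hand side as the continuous-in-$\theta$ version of the integral.

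What your route buys is simplicity and robustness:
\begin{itemize}
\item no $\theta$-smoothness or Sobolev embedding is needed for this step;
\item the bound is uniform in $n$;
\item no summability of the mesh is used, so arbitrary partitions with mesh tending to zero would do;
\item the same integration by parts applied to $\int_0^T h\cdot d\Wbar_t$ would even give consistency under mere continuity of $H^\alpha,L^\beta$ in $\theta$.
\end{itemize}

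What the paper's machinery buys is reuse. The same lemma handles the $\sqrt N$-scaled comparison in Lemma~\ref{lem:discrete-local-hessian-comparison} and the per-trajectory second moments in Proposition~\ref{prop:discrete-score-contribution-comparison}. Your crude bound is not enough there: $\sqrt N\sup_t\abs{\Wbar_t}$ is only $O_P(1)$, and $N^{-1}\sum_i\sup_t\abs{\Wvec[i]_t}^2$ does not tend to zero. One would have to refine the summation by parts through the modulus of continuity of the Brownian paths.
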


Differentiating \eqref{eq:discrete-contrast}, we have
\begin{equation}\label{eq:discrete-score}
\begin{aligned}
\nabla_\theta\Phi_{N,n}(\theta)
    &=
    \int_0^T
        \nabla_\theta h_n(t,\theta)
        \left\{d\Ybar_t-h_n(t,\theta)\,dt\right\}
        \\
        &=\sum_{j=1}^{n}
        \nabla_\theta h(t_{j-1},\theta)
        \left\{
            \Delta_j\Ybar-h(t_{j-1},\theta)\Delta_n
        \right\}.
\end{aligned}
\end{equation}
The corresponding discrete Hessian is
\begin{align}
    \nabla_\theta^2\Phi_{N,n}(\theta)
    &=
    \int_0^T
        \nabla_\theta^2 h_n(t,\theta)
        \cdot
        \left\{d\Ybar_t-h_n(t,\theta)\,dt\right\} \notag\\
    &\quad
    -
    \int_0^T
        \nabla_\theta h_n(t,\theta)
        \nabla_\theta h_n(t,\theta)^\top
        \,dt.
\label{eq:discrete-hessian}
\end{align}

For covariance estimation, define 
\begin{align}
    \eta_{i,n}(\theta)
    &:=
    \int_0^T
        \nabla_\theta h_n(t,\theta)
        \left\{d\Yvec[i]_t-h_n(t,\theta)\,dt\right\} \notag\\
    &=
    \sum_{j=1}^{n}
        \nabla_\theta h(t_{j-1},\theta)
        \left\{
            \Delta_j\Yvec[i]-h(t_{j-1},\theta)\Delta_n
        \right\},
    \qquad
    i=1,\ldots,N,
\label{eq:discrete-eta-i-theta}
\end{align}
where
\(
    \Delta_j\Yvec[i]
    :=
    \Yvec[i]_{t_j}-\Yvec[i]_{t_{j-1}}
\).
Then
\begin{equation}\label{eq:discrete-score-average}
    \nabla_\theta\Phi_{N,n}(\theta)
    =
    \frac{1}{N}\sum_{i=1}^N \eta_{i,n}(\theta).
\end{equation}
Evaluating the score at the estimator $\hat\theta_{N,n}$, we define
\begin{equation}
    \hat\eta_{i,n}:=\eta_{i,n}(\hat\theta_{N,n}),
    \qquad
    \bar\eta_{N,n}:=\frac{1}{N}\sum_{i=1}^{N}\hat\eta_{i,n}.
\end{equation}
The sample covariance estimator and the plug-in information matrix are given by
\begin{equation}\label{eq:discrete-Sigma-hat}
    \hat\Sigma_{N,n}
    :=
    \frac{1}{N}\sum_{i=1}^{N}
    (\hat\eta_{i,n}-\bar\eta_{N,n})
    (\hat\eta_{i,n}-\bar\eta_{N,n})^\top, \quad\hat I_{N,n}
    :=
    \int_0^T
        \nabla_\theta h_n(t,\hat\theta_{N,n})
        \nabla_\theta h_n(t,\hat\theta_{N,n})^\top
        \,dt.
\end{equation}
Let $\hat V_{N,n}$ be the corresponding sandwich estimator defined by
\begin{equation}\label{eq:discrete-V-hat}
    \hat V_{N,n}
    =
    \hat I_{N,n}^{-1}\hat\Sigma_{N,n}\hat I_{N,n}^{-1},
\end{equation}
provided that \(\hat I_{N,n}\) is invertible. The asymptotic normality of $\hat\theta_{N,n}$ is given as follows.

\begin{theorem}\label{thm:discrete-asymptotic-normality}
Suppose that Assumptions~\ref{ass:parameter-space}, \ref{distinctiveness},
\ref{positiveDefinite}, and \ref{differentiability}(iii)
hold. If \(N\to\infty\), \(n\to\infty\), and
\begin{equation}\label{eq:mesh-rate-sqrtN}
    \sqrt{N}\,\Delta_n\to0,
\end{equation}
then
\begin{equation}
    \sqrt{N}(\hat\theta_{N,n}-\theta_0)
    \xrightarrow{d}
    \mathcal{N}\left(0,I(\theta_0)^{-1}\Sigma I(\theta_0)^{-1}\right).
\end{equation}
\end{theorem}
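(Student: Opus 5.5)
The proof follows the standard Taylor-expansion route for M-estimators. We compare the discrete score with the continuous score $\nabla_\theta\Phi_N(\theta_0)=N^{-1}\sum_i\eta_i(\theta_0)$, whose normalized sum is asymptotically normal by the i.i.d.\ central limit theorem. By Theorem~\ref{thm:discrete-consistency}, $\hat\theta_{N,n}\to\theta_0\in\Theta^\circ$ almost surely, so with probability tending to one $\hat\theta_{N,n}$ is an interior maximizer and $\nabla_\theta\Phi_{N,n}(\hat\theta_{N,n})=0$. A componentwise mean-value expansion then gives
\begin{equation*}
0=\sqrt{N}\,\nabla_\theta\Phi_{N,n}(\theta_0)+\left(\int_0^1\nabla_\theta^2\Phi_{N,n}\bigl(\theta_0+s(\hat\theta_{N,n}-\theta_0)\bigr)\,ds\right)\sqrt{N}(\hat\theta_{N,n}-\theta_0).
\end{equation*}
It therefore suffices to prove two facts and then apply Slutsky's lemma:
\begin{itemize}
\item[(a)] $\sqrt{N}\,\nabla_\theta\Phi_{N,n}(\theta_0)\xrightarrow{d}\mathcal N(0,\Sigma)$;
\item[(b)] $\sup_{|\theta-\theta_0|\le\delta_N}\bigl|\nabla_\theta^2\Phi_{N,n}(\theta)+I(\theta_0)\bigr|\convp0$ for some $\delta_N\to0$ that dominates $|\hat\theta_{N,n}-\theta_0|$.
\end{itemize}
Assumption~\ref{positiveDefinite} makes the limit $-I(\theta_0)$ invertible.

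For (a), write $d\Ybar_t-h(t,\theta_0)\,dt=d\Wbar_t+\bigl(\bform{\Lvec^{\beta_0},\rhobar{t,\theta_0,N}}-h(t,\theta_0)\bigr)dt$ and decompose
\begin{equation*}
\sqrt{N}\,\nabla_\theta\Phi_{N,n}(\theta_0)=\sqrt{N}\,\nabla_\theta\Phi_N(\theta_0)+R_{N,n}.
\end{equation*}
The remainder splits as $R_{N,n}=R^{(1)}+R^{(2)}+R^{(3)}$.
\begin{itemize}
\item $R^{(1)}=\sqrt N\int_0^T(\nabla_\theta h_n-\nabla_\theta h)(t,\theta_0)\,d\Wbar_t$. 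Since $\sqrt N\,\Wbar$ is a standard Wiener process, Itô's isometry gives second moment at most $\int_0^T|\nabla_\theta h_n-\nabla_\theta h|^2dt\lesssim\Delta_n^2$. This uses the time-Lipschitz property of $\nabla_\theta h$ from Lemma~\ref{lem:averaged-trajectory-time-regularity}.
\item $R^{(2)}=\sqrt N\int_0^T(\nabla_\theta h_n-\nabla_\theta h)(t,\theta_0)\,\bigl(\bform{\Lvec^{\beta_0},\rhobar{t,\theta_0,N}}-h(t,\theta_0)\bigr)dt$. By independence, $\sqrt N(\rhobar{t,\theta_0,N}-\rhoave{t,\theta_0})$ is bounded in $L^2$ uniformly in $t$, because the states are bounded density operators. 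Hence $R^{(2)}=O_P(\Delta_n)$.
\item $R^{(3)}=-\sqrt N\int_0^T\nabla_\theta h_n\,(h_n-h)(t,\theta_0)\,dt$. This deterministic term is $O(\sqrt N\Delta_n)$, which tends to zero by \eqref{eq:mesh-rate-sqrtN}.
\end{itemize}
The main term $\sqrt N\,\nabla_\theta\Phi_N(\theta_0)=N^{-1/2}\sum_i\eta_i(\theta_0)$ has i.i.d.\ summands. They are centered because $\mathbb E[dY^{(i)}_t]=h(t,\theta_0)\,dt$ by \eqref{eq:h-definition}, and they have finite second moments since both the states and $\nabla_\theta h$ are bounded. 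The multivariate central limit theorem then gives $\mathcal N(0,\Sigma)$.

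For (b), use \eqref{eq:discrete-hessian}. The deterministic part $\int_0^T\nabla_\theta h_n\nabla_\theta h_n^\top\,dt$ converges to $I(\theta)$ uniformly in $\theta$, by uniform continuity of $\nabla_\theta h$ on $[0,T]\times\Theta$ under Assumption~\ref{differentiability}(iii). Continuity of $I(\cdot)$ then handles $\theta\to\theta_0$. The stochastic part is $\int_0^T\nabla_\theta^2h_n(t,\theta)\cdot\{d\Ybar_t-h_n(t,\theta)\,dt\}$. Substituting the observation equation, it equals
\begin{equation*}
\int_0^T\nabla_\theta^2h_n\cdot d\Wbar_t+\int_0^T\nabla_\theta^2h_n\cdot\bigl(\bform{\Lvec^{\beta_0},\rhobar{t,\theta_0,N}}-h_n(t,\theta)\bigr)dt.
\end{equation*}
The drift integral tends to $0$ uniformly over $|\theta-\theta_0|\le\delta_N$. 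This follows from the strong law applied to $\rhobar{t,\theta_0,N}$ (the averaged-observation convergence lemmas used for Theorem~\ref{consistency}), together with the Lipschitz continuity of $h$ in $\theta$ and the bound $|h_n-h|\lesssim\Delta_n$.

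The expected obstacle is the uniform-in-$\theta$ control of the stochastic integral $\int_0^T\nabla_\theta^2h_n(t,\theta)\cdot d\Wbar_t$ over a neighborhood of $\theta_0$. My approach is to integrate by parts in time on the step function. This rewrites the integral as a sum of terms of the form $\nabla_\theta^2h(t_{j-1},\theta)\cdot(\Wbar_{t_j}-\Wbar_{t_{j-1}})$, which in turn equals
\begin{equation*}
\nabla_\theta^2h(T,\theta)\cdot\Wbar_T-\sum_j\bigl(\nabla_\theta^2h(t_j,\theta)-\nabla_\theta^2h(t_{j-1},\theta)\bigr)\cdot\Wbar_{t_j}
\end{equation*}
up to the endpoint term. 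The absolute value is then bounded by $C\sup_{t\le T}|\Wbar_t|$, where $C$ combines $\sup_{t,\theta}|\nabla_\theta^2 h|$ with the total variation in $t$ of $\nabla_\theta^2h(\cdot,\theta)$. Lemma~\ref{lem:averaged-trajectory-time-regularity} should bound both quantities uniformly in $\theta$ under Assumption~\ref{differentiability}(iii). Since $\sup_t|\Wbar_t|\to0$ almost surely by the strong law, the term vanishes uniformly in $\theta\in\Theta$. Combining (a) and (b) with Slutsky's lemma yields $\sqrt N(\hat\theta_{N,n}-\theta_0)\xrightarrow{d}\mathcal N\bigl(0,I(\theta_0)^{-1}\Sigma I(\theta_0)^{-1}\bigr)$.
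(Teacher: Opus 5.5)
Your proof is correct. It has the same skeleton as the paper's: interior first-order condition, integral-form Taylor expansion, comparison of the discrete score with $N^{-1/2}\sum_i\eta_i(\theta_0)$ plus the i.i.d.\ CLT, convergence of the averaged Hessian to $-I(\theta_0)$, and Slutsky. The two key estimates, however, are obtained by different means.

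\textbf{Score remainder.} The paper proves a bound uniform over a neighbourhood $U$ of $\theta_0$ through Lemma~\ref{lem:discrete-local-hessian-comparison}(i), which uses Lemma~\ref{lem:StochIntVanishesCombined}(i) (Sobolev--Morrey plus BDG). You work only at the fixed point $\theta_0$, where It\^o's isometry for a deterministic integrand suffices. Since only $\theta_0$ is used in the expansion, nothing is lost. Your finer bound $R^{(2)}=O_P(\Delta_n)$ is also correct, though the crude $O(\sqrt N\Delta_n)$ would already do.

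\textbf{Hessian.} The paper compares $\nabla_\theta^2\Phi_{N,n}$ with the continuous Hessian $\nabla_\theta^2\Phi_N$ in Lemma~\ref{lem:discrete-local-hessian-comparison}(ii). This requires $\partial_\theta(\nabla^2_\theta h_n-\nabla^2_\theta h)=O(\Delta_n)$, hence third derivatives. It then reuses \eqref{HessianConvergence}, which also needs order-three regularity. You bypass $\nabla_\theta^2\Phi_N$ entirely. Instead you bound the martingale term pathwise by Abel summation,
\[
\sup_{\theta\in\Theta}\Bigl|\int_0^T\nabla^2_\theta h_n(t,\theta)\cdot d\Wbar_t\Bigr|\le C\sup_{t\le T}\bigl|\Wbar_t\bigr|,
\]
and control the drift and deterministic parts explicitly in $\|\bar X^N\|_\infty$, $\Delta_n$ and $|\theta-\theta_0|$.

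\textbf{What each route buys.} Your route is more elementary. It needs no differentiation of stochastic integrals in $\theta$, no choice of modifications, and no Sobolev embedding, because $\Phi_{N,n}$ is a finite sum and only $\theta_0$ enters a genuine It\^o integral. It is uniform over all of $\Theta$ and gives almost sure rather than in-probability convergence of the averaged Hessian, so your $\delta_N$ device is in fact unnecessary. It also only requires $\nabla^2_\theta h$ to be bounded and Lipschitz in $t$ uniformly in $\theta$. Lemma~\ref{lem:averaged-trajectory-time-regularity} already provides this under Assumption~\ref{differentiability}(ii), so your argument establishes the theorem under weaker regularity than the stated (iii). The paper's route buys modularity: the discrete result is obtained as a perturbation of the continuous-time one, using the same general lemma as in the consistency and covariance proofs.
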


The discrete covariance estimator is consistent in the following sense.

\begin{theorem}\label{thm:discrete-covariance-estimator-consistency}
Suppose that Assumptions~\ref{ass:parameter-space}, \ref{distinctiveness},
\ref{positiveDefinite}, and \ref{differentiability}(ii)
hold. Then, as \(N\to\infty\) and
\(n\to\infty\),
\begin{equation}
    \hat\Sigma_{N,n}\convas\Sigma,
    \qquad
    \hat V_{N,n}\convas I(\theta_0)^{-1}\Sigma I(\theta_0)^{-1}.
\end{equation}
\end{theorem}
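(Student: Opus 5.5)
We prove Theorem~\ref{thm:discrete-covariance-estimator-consistency} by comparing everything with the continuous-time objects. We then use Theorem~\ref{thm:covariance-estimator-consistency} together with the strong consistency in Theorem~\ref{thm:discrete-consistency}.

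First, we expand the sample covariance. Write \(\eta_{i,n}(\theta)=A_n(\theta)\Yvec[i]\text{-part}-c_n(\theta)\), where
\[
A_{i,n}(\theta):=\sum_{j=1}^n\nabla_\theta h(t_{j-1},\theta)\Delta_j\Yvec[i],
\qquad
c_n(\theta):=\int_0^T\nabla_\theta h_n(t,\theta)h_n(t,\theta)\,dt .
\]
The deterministic term \(c_n(\theta)\) cancels in \(\hat\eta_{i,n}-\bar\eta_{N,n}\). Hence
\[
\hat\Sigma_{N,n}=\frac1N\sum_i A_{i,n}(\hat\theta_{N,n})A_{i,n}(\hat\theta_{N,n})^\top-\bar A_{N,n}\bar A_{N,n}^\top .
\]
Using \(\Yvec[i]_t=\int_0^t \bform{\Lvec^\beta,\rhosol{s,\theta_0,i}}ds+\Wvec[i]_t\), each entry of \(A_{i,n}(\theta)A_{i,n}(\theta)^\top\) is a finite sum of terms of the form
\[
\int_0^T\!\!\int_0^T f(s,\theta)g(u,\theta)\,dY^{(i,k)}_s\,dY^{(i,l)}_u
\]
with \(f,g\) step functions built from \(\partial_\theta h(t_{j-1},\theta)\). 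The same holds for the continuous-time \(\eta_i\), with \(f,g\) replaced by \(\partial_\theta h(t,\theta)\).

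Second, we establish a uniform strong law for the empirical second moments. We aim for
\[
\sup_{\theta\in\Theta}\Bigl|\frac1N\sum_i A_{i,n}(\theta)A_{i,n}(\theta)^\top-\mathbb E[A_{1}(\theta)A_1(\theta)^\top]\Bigr|\convas0,
\]
and similarly for the first moments, as \(N,n\to\infty\). Here \(A_1(\theta)=\int_0^T\nabla_\theta h(t,\theta)\,d\Yvec[1]_t\). We split the difference into two pieces. The first is \(\frac1N\sum_i(A_{i,n}A_{i,n}^\top-A_iA_i^\top)\). The second is \(\frac1N\sum_iA_iA_i^\top-\mathbb E[\cdot]\); this is exactly the quantity handled in the proof of Theorem~\ref{thm:covariance-estimator-consistency}, which presumably uses a uniform SLLN via equicontinuity in \(\theta\), and we reuse it.

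For the discretization piece, we use Assumption~\ref{differentiability}(ii) and Lemma~\ref{lem:averaged-trajectory-time-regularity}. These give \(\sup_{\theta}\sup_{|t-s|\le\Delta_n}|\nabla_\theta h(t,\theta)-\nabla_\theta h(s,\theta)|\lesssim\Delta_n\). Then
\[
|A_{i,n}(\theta)-A_i(\theta)|\le\Bigl|\int_0^T(\nabla_\theta h_n-\nabla_\theta h)(t,\theta)\,d\Yvec[i]_t\Bigr|.
\]
The drift part is bounded by \(\Delta_n\), since \(\rho\) is a density operator and \(|\bform{L_k^\beta,\rho}|\) is bounded. For the martingale part, integration by parts removes the stochastic integral: the integrand is deterministic with bounded variation in \(t\). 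This gives the bound
\[
\sup_\theta\Bigl|\int_0^T(\nabla_\theta h_n-\nabla_\theta h)\,d\Wvec[i]\Bigr|\lesssim\Delta_n\bigl(1+\sup_{t\le T}|\Wvec[i]_t|\bigr),
\]
where the time derivative \(\partial_t\nabla_\theta h\) is controlled by the same lemma. Consequently
\[
\sup_\theta\frac1N\sum_i|A_{i,n}A_{i,n}^\top-A_iA_i^\top|\lesssim\Delta_n\,\frac1N\sum_i\bigl(1+\sup_t|\Wvec[i]_t|\bigr)^2 .
\]
By the classical SLLN the right-hand side tends to \(0\) almost surely. This is valid along any joint limit \(N,n\to\infty\), because the random factor does not depend on \(n\).

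Third, we conclude by continuity. The map \(\theta\mapsto\mathbb E[A_1(\theta)A_1(\theta)^\top]-\mathbb E[A_1(\theta)]\mathbb E[A_1(\theta)]^\top\) is continuous; this follows by dominated convergence and continuity of \(\nabla_\theta h\). At \(\theta_0\) it equals \(\Sigma\), because \(c(\theta_0)\) is deterministic and therefore does not change the covariance. Moreover, \(\mathbb E\eta_i(\theta_0)=0\), since \(\mathbb E\,d\Yvec[i]_t=h(t,\theta_0)dt\); hence the covariance coincides with the second moment \(\Sigma\). Combined with \(\hat\theta_{N,n}\convas\theta_0\), this gives \(\hat\Sigma_{N,n}\convas\Sigma\).

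For \(\hat V_{N,n}\), we note that \(\sup_\theta|\int_0^T(\nabla_\theta h_n\nabla_\theta h_n^\top-\nabla_\theta h\nabla_\theta h^\top)dt|\lesssim\Delta_n\). Together with continuity of \(I(\theta)\) and consistency, this yields \(\hat I_{N,n}\convas I(\theta_0)\). The limit is invertible by Assumption~\ref{positiveDefinite}, so \(\hat I_{N,n}\) is eventually invertible almost surely, and continuity of inversion finishes the proof.

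The main obstacle is the uniform-in-\(\theta\) almost sure control of the discretization error of the stochastic integrals. The pathwise integration-by-parts bound is what I would try first. It requires Lipschitz continuity in \(t\) of \(\nabla_\theta h\), uniformly in \(\theta\), which I expect Lemma~\ref{lem:averaged-trajectory-time-regularity} to provide under Assumption~\ref{differentiability}(ii).
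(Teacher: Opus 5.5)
Your overall architecture matches the paper's. You write $\hat\Sigma_{N,n}$ as an empirical second moment minus an outer product of empirical means; dropping $c_n(\theta)$ is fine, since it cancels after centring. You then compare with the continuous-time quantities uniformly in $\theta$, invoke Lemma~\ref{lem:SecondMomentUniformLLN}, plug in $\hat\theta_{N,n}\convas\theta_0$, and handle $\hat I_{N,n}$ by the $O(\Delta_n)$ bound.

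The gap is in the step you yourself flagged. The pathwise estimate
\[
\sup_{\theta\in\Theta}\Bigl|\int_0^T(\nabla_\theta h_n-\nabla_\theta h)(t,\theta)\,d\Wvec[i]_t\Bigr|\lesssim\Delta_n\bigl(1+\sup_{t\le T}|\Wvec[i]_t|\bigr)
\]
is false. The integrand $g_n:=\nabla_\theta h_n-\nabla_\theta h$ is $O(\Delta_n)$ in sup norm, but its total variation in $t$ is $O(1)$: the step function has $n$ jumps of size $O(\Delta_n)$. So integration by parts cannot produce the factor $\Delta_n\sup_t|W_t|$. Carried out on each $(t_{j-1},t_j]$, it gives, componentwise,
\[
\int_0^T g_n(t,\theta)\,dW_t=\sum_{j=1}^n\int_{t_{j-1}}^{t_j}(W_t-W_{t_j})\,\partial_t\nabla_\theta h(t,\theta)\,dt .
\]
This is bounded by $C\,\omega(\Delta_n)$, where $\omega(\delta):=\sup_{|t-s|\le\delta}|W_t-W_s|$ is the modulus of continuity of the path, and pathwise it cannot be bounded by anything better. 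Suppose $\partial_t\nabla_\theta h(\cdot,\theta)$ is, say, positive on some subinterval. Consider the positive-probability event that $W$ stays in a thin tube around a sawtooth that vanishes at the grid points and is supported in that subinterval. On that event the right-hand side is of order one, while $\Delta_n(1+\sup_t|W_t|)$ is of order $\Delta_n$. The rate $\Delta_n$ holds only in $L^2$ (It\^o isometry), which is why the paper works with moment bounds. Your use of the classical SLLN along the joint limit rests on the error being $\Delta_n$ times an $n$-independent random factor, and that claim does not hold.

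The strategy is nevertheless repairable. Integration by parts does give $\sup_\theta(|A_{i,n}(\theta)|+|A_i(\theta)|)\lesssim1+\sup_t|\Wvec[i]_t|$, since $O(1)$ total variation suffices there. The display above gives $\sup_\theta|A_{i,n}(\theta)-A_i(\theta)|\lesssim\Delta_n+\omega_i(\Delta_n)$, with $\omega_i$ the modulus of continuity of $\Wvec[i]$. This uses only that $\nabla_\theta h$ is Lipschitz in $t$ uniformly in $\theta$ (Lemma~\ref{lem:averaged-trajectory-time-regularity}). Since $\omega_i(\Delta_n)$ is nonincreasing in $n$, for all $n\ge K$ the discretization error of the empirical second moments is bounded by $\frac1N\sum_{i=1}^N(\Delta_K+\omega_i(\Delta_K))(1+\sup_t|\Wvec[i]_t|)$. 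As $N\to\infty$ this converges almost surely to $\mathbb E[(\Delta_K+\omega_1(\Delta_K))(1+\sup_t|\Wvec[1]_t|)]$, which tends to $0$ as $K\to\infty$ by dominated convergence. That gives the double-limit statement; the first moments are handled the same way.

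With this correction your route genuinely differs from the paper's. The paper controls $\int_0^T(\nabla_\theta h_n-\nabla_\theta h)\,d\Wvec[i]_t$ through Lemma~\ref{lem:StochIntVanishesCombined}(iii), using Sobolev--Morrey and BDG moment bounds plus Borel--Cantelli over pairs $(N,n)$, and then uses Cauchy--Schwarz for the cross terms. That lemma needs $\partial_\theta$ of the integrand to be $O(\Delta_n)$, hence second-order regularity. Your pathwise argument gives uniformity in $\theta$ for free and needs only first-order $\theta$-regularity for this step. The cost is a modulus-of-continuity rate instead of $\Delta_n$, which is irrelevant for almost sure consistency.
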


\section{Proof of the main results}
\label{sec:proof-main-results}

\begingroup
\subsection{Continuously observed case}
\label{subsec:proof-continuously-observed-case}

By \zcref{lem:differentiation-stochastic-integral}, the contrast function \eqref{eq:contrast} is twice differentiable with respect to $\theta$ and we have
\begin{align}
\nabla_\theta\Phi_N(\theta)
&=
\int_0^T
\nabla_\theta h(t,\theta)
\left\{
d\Ybar_t-h(t,\theta)\,dt
\right\},
\label{eq:first-derivative-h}\\
\nabla_\theta^2\Phi_N(\theta)
&=
\int_0^T
\nabla_\theta^2 h(t,\theta)
\cdot
\left\{
d\Ybar_t-h(t,\theta)\,dt
\right\}
-
\int_0^T
\nabla_\theta h(t,\theta)
\nabla_\theta h(t,\theta)^\top
\,dt.
\label{eq:second-derivative-h}
\end{align}

The componentwise first derivative of \(h\) is given by
\begin{equation}\label{eq:h-beta}
    \partial_\theta h(t,\theta)
    =
    \bform{\partial_\theta\Lvec^\beta,\rhoave{t,\theta}}
    +
    \bform{\Lvec^\beta,\partial_\theta\rhoave{t,\theta}}.
\end{equation}

\subsubsection{Proof of \zcref{consistency}}

We now collect the elementary convergence results that will be used in the
proofs below.  We will use the
convention \(\partial_\theta^0h=h\).
\begin{lemma}\label{lem:basic-uniform-convergences}
Suppose that \zcref{ass:parameter-space} holds, and let
\(\ell\in\{0,1,2\}\). 
\begin{itemize}
    \item[(i)] If \zcref{differentiability} holds with order \(\ell\), then
\begin{equation}\label{eq:empirical-drift-convergence}
    \int_0^T
        \partial_\theta^\ell h(t,\theta)\cdot
        \bform{\Lvec^{\beta_0},\rhobar{t,\theta_0,N}}
    \,dt
    \convas
    \int_0^T
        \partial_\theta^\ell h(t,\theta)\cdot
        \bform{\Lvec^{\beta_0},\rhoave{t,\theta_0}}
    \,dt.
\end{equation}
as \(N\to\infty\), uniformly in \(\theta\in\Theta\).
\item[(ii)]If \zcref{differentiability} holds with order \(\ell+1\), then
\begin{align}
    \int_0^T
        \partial_\theta^\ell h(t,\theta)\cdot d\Wbar_t
    &\convas 0,
    \label{eq:stoch-int-vanishes-h-derivatives}\\
    \int_0^T
        \partial_\theta^\ell h(t,\theta)\cdot d\Ybar_t
    &\convas
    \int_0^T
        \partial_\theta^\ell h(t,\theta)\cdot h(t,\theta_0)
    \,dt.
    \label{eq:ybar-integral-convergence}
\end{align}
as \(N\to\infty\), uniformly in \(\theta\in\Theta\).
\end{itemize}
\end{lemma}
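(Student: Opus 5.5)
Part (i) is a uniform strong law of large numbers, so the plan is to separate the \(\theta\)-dependence from the randomness. Write
\begin{equation*}
    \int_0^T
        \partial_\theta^\ell h(t,\theta)\cdot
        \bform{\Lvec^{\beta_0},\rhobar{t,\theta_0,N}-\rhoave{t,\theta_0}}
    \,dt,
\end{equation*}
and bound it in absolute value by
\begin{equation*}
    \sup_{t,\theta}\abs{\partial_\theta^\ell h(t,\theta)}\,
    \norm{\Lvec^{\beta_0}}_{\mathrm{HS}}
    \int_0^T\norm{\rhobar{t,\theta_0,N}-\rhoave{t,\theta_0}}_{\mathrm{HS}}\,dt .
\end{equation*}
The supremum is finite because \(\Theta\) is compact and \(\partial_\theta^\ell h\) is continuous in \((t,\theta)\) under Assumption~\ref{differentiability} of order \(\ell\) (the regularity from Lemma~\ref{lem:averaged-trajectory-time-regularity}). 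The remaining factor no longer depends on \(\theta\). Its random integrand is \(N^{-1}\sum_i(\rhosol{t,\theta_0,i}-\rhoave{t,\theta_0})\), and each summand is bounded because every \(\rhosol{t,\theta_0,i}\) is a density operator (Theorem~\ref{thm:existence-sme}). To show this factor tends to zero a.s., consider the i.i.d. random variables \(Z_i:=\rhosol{\cdot,\theta_0,i}\), taking values in the separable Banach space \(L^1([0,T];M_d(\mathbb{C}))\). They are bounded, so the Banach-space SLLN (Mourier) applies. A more elementary alternative is to use pathwise continuity in \(t\): apply the scalar SLLN at rational times and control oscillations with moment bounds on the increments. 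I would use the Mourier route.

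For (ii), I would first prove \eqref{eq:stoch-int-vanishes-h-derivatives}. Since \(g(t):=\partial_\theta^\ell h(t,\theta)\) is deterministic and \(C^1\) in \(t\), Itô integration by parts, pathwise, gives
\begin{equation*}
    \int_0^T g(t)\cdot d\Wbar_t
    =
    g(T)\cdot\Wbar_T-\int_0^T \partial_t g(t)\cdot\Wbar_t\,dt ,
\end{equation*}
which gives the bound
\begin{equation*}
    \abs{\int_0^T g\cdot d\Wbar}
    \le C\sup_{t\le T}\abs{\Wbar_t},
\end{equation*}
with \(C\) uniform in \(\theta\). It then remains to show \(\sup_t\abs{\Wbar_t}\to0\) a.s. Note that \(\sqrt N\,\Wbar\) is a standard Brownian motion, so \(P(\sup_t\abs{\Wbar_t}>\varepsilon)\le Ce^{-N\varepsilon^2/(2rT)}\). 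This is summable in \(N\), and Borel--Cantelli yields the claim; alternatively one can apply the Banach SLLN in \(C([0,T])\).

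The main obstacle is the time regularity \(\partial_t\partial_\theta^\ell h\) needed for the integration by parts, and this is where order \(\ell+1\) enters. The plan is to get it from Lemma~\ref{lem:averaged-trajectory-time-regularity}, since \(\rhoave{t,\theta}\) solves the linear Lindblad ODE \(\partial_t\rhoave{t,\theta}=\mathcal L^\theta(\rhoave{t,\theta})\), because the martingale term has zero mean. If that lemma only gives continuity, a fallback is a Kolmogorov/chaining argument in \(\theta\): use Lipschitz dependence on \(\theta\) through the \((\ell+1)\)-th derivative together with pointwise a.s. convergence on a finite net.

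Finally, \eqref{eq:ybar-integral-convergence} follows by decomposing
\begin{equation*}
    d\Ybar_t=\bform{\Lvec^{\beta_0},\rhobar{t,\theta_0,N}}\,dt+d\Wbar_t .
\end{equation*}
Part (i) handles the drift, giving the limit \(\int_0^T\partial_\theta^\ell h(t,\theta)\cdot h(t,\theta_0)\,dt\) by \eqref{eq:h-definition}, and the martingale part vanishes uniformly by the previous step.
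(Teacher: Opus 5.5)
Your proof is correct. Part (i) is essentially the paper's argument: the paper applies the Banach-space strong law to the bounded, mean-zero i.i.d.\ $C([0,T];\mathbb{R}^r)$-valued processes $b(\mathbf{L}^{\beta_0},\rho_t^{\theta_0,i})-h(t,\theta_0)$ and then factors out $T\sup_{t,\theta}|\partial_\theta^\ell h|$, just as you do in $L^1([0,T];M_d(\mathbb{C}))$.

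For the martingale part of (ii) you take a genuinely different route. The paper invokes Lemma~\ref{lem:StochIntVanishesGeneral}. That lemma treats $\theta\mapsto\int_0^T f(t,\theta)\cdot d\mathbf{W}^{(i)}_t$ as i.i.d.\ random elements of $C(\Theta)$. It bounds their sup-norm moments using the Burkholder--Davis--Gundy inequality and the Sobolev--Morrey embedding in $\theta$; this step needs $\partial_\theta f=\partial_\theta^{\ell+1}h$, hence order $\ell+1$. It then applies the strong law in $C(\Theta)$. Your integration by parts in $t$ instead reduces everything to $\sup_{0\le t\le T}|\overline{\mathbf{W}}^N_t|\to0$. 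This is more elementary, and it directly gives a pathwise version of the random field that is continuous in $\theta$.

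Your remark that the time regularity is where order $\ell+1$ enters is inaccurate, though. The $t$-regularity comes from the Lindblad ODE and costs no extra $\theta$-derivative. Lemma~\ref{lem:averaged-trajectory-time-regularity} at order $\ell$ already makes $\partial_\theta^\ell h$ bounded and Lipschitz in $t$, uniformly in $\theta$, and that is all the Stieltjes integration by parts needs. So your route proves (ii) under order $\ell$ only, and the chaining fallback is never required.

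What the paper's route buys in exchange is generality. It uses no time regularity of the integrand, and its moment bounds depend only on $\sup|f|+\sup|\partial_\theta f|$. That is exactly the form reused in Lemma~\ref{lem:StochIntVanishesCombined} for the discretization errors $\partial_\theta^\ell h_n-\partial_\theta^\ell h$. Those errors have sup-norm $O(\Delta_n)$, but their total variation in $t$ does not tend to zero. Hence the crude bound $\bigl(\|f\|_\infty+\mathrm{TV}(f)\bigr)\sup_t|\overline{\mathbf{W}}^N_t|$ would not capture their smallness.
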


\begin{proof}
\begingroup
Put
\begin{align}
    X_i(t)
    &:=
    \bform{\Lvec^{\beta_0},\rhosol{t,\theta_0,i}}
    -
    \bform{\Lvec^{\beta_0},\rhoave{t,\theta_0}},
    \\
    \overline{X}^N(t)
    &:=
    \frac{1}{N}\sum_{i=1}^N X_i(t)
    =
    \bform{\Lvec^{\beta_0},\rhobar{t,\theta_0,N}}
    -
    \bform{\Lvec^{\beta_0},\rhoave{t,\theta_0}},
    \qquad 0\leq t\leq T .
\end{align}
Then \(X_1,X_2,\dots\) are independent and identically distributed (i.i.d.)
\(C([0,T];\mathbb R^r)\)-valued random variables. Since both
\(\rhosol{t,\theta_0,1}\) and \(\rhoave{t,\theta_0}\) are density operators,
\begin{equation}
    \norm{X_1}_{C([0,T];\mathbb{R}^r)}
    \leq
    4\left(
        \sum_{k=1}^r
        \norm{L_k^{\beta_0}}_{\mathrm{op}}^2
    \right)^{1/2}
    \qquad\text{a.s.},
\end{equation}
and, in particular,
\[
    \mathbb{E}\norm{X_1}_{C([0,T];\mathbb{R}^r)}<\infty .
\]
Moreover, \(\mathbb{E}X_1(t)=0\) for every \(t\in[0,T]\).
Since \(C([0,T];\mathbb{R}^r)\) is separable, \(X_1\) is strongly
measurable, and hence the above integrability bound implies that \(X_1\)
is Bochner integrable with mean zero.
Therefore, the strong law of large numbers in a separable Banach space
\cite[Corollary~7.10]{ledoux1991probability} yields
\begin{equation}
    \norm{\overline{X}^N}_{C([0,T];\mathbb{R}^r)}
    \convas 0 .
\end{equation}
Consequently, for each \(\ell=0,1,2\),
\begin{equation}
\sup_{\theta\in\Theta}
\left|
    \int_0^T \partial_\theta^\ell h(t,\theta)\cdot \overline{X}^N(t)\,dt
\right|
\leq
T\norm{\partial_\theta^\ell h}_{C([0,T]\times\Theta;\mathbb{R}^r)}
\norm{\overline{X}^N}_{C([0,T];\mathbb{R}^r)}
\convas 0,
\end{equation}
which proves (i).
\endgroup
Assertion (ii) follows by combining \zcref{lem:StochIntVanishesGeneral} with (i).
\end{proof}

\begin{proof}[Proof of \zcref{consistency}]
By \zcref{lem:basic-uniform-convergences},
\begin{equation}
    \int_0^T h(t,\theta)\cdot d\Ybar_t
    \convas
    \int_0^T h(t,\theta)\cdot h(t,\theta_0)\,dt
\end{equation}
uniformly in \(\theta\in\Theta\). Therefore
\begin{equation}\label{eq:contrast-limit}
    \Phi_N(\theta)
    \convas
    \Phi(\theta)
    :=
    -\frac{1}{2}
    \int_0^T
        \norm{h(t,\theta)-h(t,\theta_0)}_{\mathbb{R}^r}^2
    \,dt
    +
    \frac{1}{2}
    \int_0^T
        \norm{h(t,\theta_0)}_{\mathbb{R}^r}^2
    \,dt,
\end{equation}
uniformly in \(\theta\in\Theta\).  
\begingroup
The function \(\Phi\) is continuous on the compact set \(\Theta\). By
\zcref{distinctiveness} and the continuity of
\(t\mapsto h(t,\theta)-h(t,\theta_0)\), the integral in
\eqref{eq:contrast-limit} is strictly positive for every
\(\theta\neq\theta_0\). Hence \(\Phi\) has the unique maximizer
\(\theta_0\). The argmax theorem therefore implies
\(\hat{\theta}_N\convas\theta_0\).
\endgroup
\end{proof}

\subsubsection{Proof of \zcref{asymptotic normality}}

\begin{proposition}\label{prop:ScoreCLT}
Suppose that \zcref{ass:parameter-space} and \zcref{differentiability}(i) hold. Then
\begin{equation}
    \sqrt{N}\,\nabla_\theta\Phi_N(\theta_0)
    \xrightarrow{d}
    \mathcal{N}\left(0,\Sigma\right).
\end{equation}
\end{proposition}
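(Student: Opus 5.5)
The plan is to write the score at \(\theta_0\) as an empirical mean of i.i.d. random vectors and then apply the classical multivariate central limit theorem. By \eqref{eq:first-derivative-h} and linearity of the stochastic integral in the integrator \(\Ybar\),
\begin{equation}
    \nabla_\theta\Phi_N(\theta_0)
    =
    \frac{1}{N}\sum_{i=1}^N \eta_i(\theta_0),
\end{equation}
with \(\eta_i\) as in \eqref{eq:eta-i-theta-definition}. The integrand \(\nabla_\theta h(t,\theta_0)\) is deterministic. Each \(\eta_i(\theta_0)\) is therefore a measurable functional of the pair \((\Wvec[i],\rhosol{\cdot,\theta_0,i})\), which is itself a functional of \(\Wvec[i]\) by strong uniqueness (\zcref{thm:existence-sme}). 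Hence \(\eta_1(\theta_0),\eta_2(\theta_0),\dots\) are i.i.d.\ \(\mathbb{R}^p\)-valued random vectors, and it remains to check that they have mean zero and finite second moments.

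Under the true parameter, \(d\Yvec[i]_t = \bform{\Lvec^{\beta_0},\rhosol{t,\theta_0,i}}\,dt + d\Wvec[i]_t\). This gives the decomposition
\begin{equation}
    \eta_i(\theta_0)
    =
    \int_0^T \nabla_\theta h(t,\theta_0)\,d\Wvec[i]_t
    +
    \int_0^T \nabla_\theta h(t,\theta_0)\,X_i(t)\,dt,
\end{equation}
where \(X_i(t)=\bform{\Lvec^{\beta_0},\rhosol{t,\theta_0,i}}-h(t,\theta_0)\) is the process from the proof of \zcref{lem:basic-uniform-convergences}.

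Under \zcref{differentiability}(i), \zcref{lem:averaged-trajectory-time-regularity} makes \(\nabla_\theta h(\cdot,\theta_0)\) continuous and hence bounded on \([0,T]\).

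\begin{itemize}
    \item[(a)] The first term is a Wiener integral of a bounded deterministic integrand. It is Gaussian with mean zero and covariance \(I(\theta_0)\) by the Itô isometry.
    \item[(b)] In the second term, \(X_i\) is bounded almost surely by the bound established in that proof. Its expectation vanishes by Fubini, since \(\mathbb{E}X_i(t)=0\) for each \(t\) by the definition \eqref{eq:h-definition} of \(h\).
\end{itemize}

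Consequently \(\mathbb{E}\eta_i(\theta_0)=0\) and \(\mathbb{E}\abs{\eta_i(\theta_0)}^2<\infty\). The covariance is exactly \(\Sigma=\mathbb{E}[\eta_i(\theta_0)\eta_i(\theta_0)^\top]\), which is finite and independent of \(i\). Note that \(\Sigma\) generally differs from \(I(\theta_0)\), because the two terms are correlated and the second one contributes as well. Since we never need an explicit formula for \(\Sigma\), this causes no difficulty.

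The Lindeberg--Lévy multivariate CLT (via Cramér--Wold if desired) now gives \(\sqrt{N}\,\nabla_\theta\Phi_N(\theta_0)=N^{-1/2}\sum_i\eta_i(\theta_0)\xrightarrow{d}\mathcal N(0,\Sigma)\).

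The only delicate point is the first step. One must make sure that the identity \eqref{eq:first-derivative-h}, which comes from \zcref{lem:differentiation-stochastic-integral}, holds pathwise as the average of the per-trajectory integrals \(\eta_i\), and that these are genuinely i.i.d. This follows because \(\nabla_\theta h(\cdot,\theta_0)\) is deterministic, so \(\int_0^T \nabla_\theta h\,d\Ybar_t\) is linear in the integrator. Everything else is routine.
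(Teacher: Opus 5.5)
Your proposal is correct and follows the paper's proof. Both write $\sqrt{N}\,\nabla_\theta\Phi_N(\theta_0)$ as $N^{-1/2}\sum_{i=1}^N\eta_i(\theta_0)$ with i.i.d.\ summands, check mean zero and finite second moments using the boundedness of $\nabla_\theta h(\cdot,\theta_0)$ and of $b(\mathbf{L}^{\beta_0},\rho^{\theta_0,i}_t)$, and conclude with the multivariate CLT. The only differences are cosmetic: you use the It\^o isometry where the paper cites Burkholder--Davis--Gundy, and you spell out the mean-zero verification (Wiener integral plus a centred bounded drift term, via $\mathbb{E}X_i(t)=0$) that the paper only asserts.
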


\begin{proof}
Note that
\(\eta_1(\theta_0),\dots,\eta_N(\theta_0)\) are i.i.d., with
\begin{equation}
    \mathbb{E}[\eta_i(\theta_0)]=0,
    \qquad
    \mathbb{E}\left[
        \eta_i(\theta_0)\eta_i(\theta_0)^\top
    \right]
    =
    \Sigma.
\end{equation}
Moreover, the boundedness of \(\nabla_\theta h(t,\theta_0)\) and
\(\bform{\Lvec^{\beta_0},\rhosol{t,\theta_0,i}}\), together with the
Burkholder--Davis--Gundy inequality, implies
\begin{equation}
    \mathbb{E}\left[\norm{\eta_i(\theta_0)}_{\mathbb{R}^p}^2\right]<\infty.
\end{equation}
Hence, by the multivariate central limit theorem,
\begin{equation}
    \sqrt{N}\,\nabla_\theta\Phi_N(\theta_0)
    =
    \frac{1}{\sqrt{N}}
    \sum_{i=1}^N\eta_i(\theta_0)
    \xrightarrow{d}
    \mathcal{N}(0,\Sigma).
\end{equation}
\end{proof}
\begin{proof}[Proof of \zcref{asymptotic normality}]
Since \(\theta_0\in\Theta^\circ\) and
\(\hat{\theta}_N\convas\theta_0\), we have
\(\hat{\theta}_N\in\Theta^\circ\) eventually almost surely. Hence,
\(\nabla_\theta\Phi_N(\hat{\theta}_N)=0\) eventually almost surely.
By Taylor's theorem,
\begin{equation}
    0
    =
    \sqrt{N}\,\nabla_\theta\Phi_N(\hat{\theta}_N)
    =
    \sqrt{N}\,\nabla_\theta\Phi_N(\theta_0)
    +
    \left\{
        \int_0^1
        \nabla_\theta^2\Phi_N
        \bigl(
            \theta_0+s(\hat{\theta}_N-\theta_0)
        \bigr)\,ds
    \right\}
    \sqrt{N}(\hat{\theta}_N-\theta_0).
\end{equation}
By \zcref{lem:basic-uniform-convergences} and
\zcref{consistency},
\begin{equation}
\label{HessianConvergence}
\int_0^1
\nabla_\theta^2\Phi_N
\bigl(
\theta_0+s(\hat\theta_N-\theta_0)
\bigr)\,ds
\convas
-I(\theta_0).
\end{equation}
\begingroup
Indeed, by \zcref{lem:basic-uniform-convergences}, uniformly on
\(\Theta\),
\[
\nabla_\theta^2\Phi_N(\theta)
\convas
\int_0^T
\nabla_\theta^2h(t,\theta)\cdot\{h(t,\theta_0)-h(t,\theta)\}\,dt
-
\int_0^T
\nabla_\theta h(t,\theta)\nabla_\theta h(t,\theta)^\top\,dt,
\]
and the limit is continuous in \(\theta\) and equals \(-I(\theta_0)\) at
\(\theta=\theta_0\), so \eqref{HessianConvergence} follows from
\zcref{consistency}.
\endgroup
Since \(I(\theta_0)\) is positive definite by
\zcref{positiveDefinite}, \zcref{prop:ScoreCLT}
and Slutsky's theorem yield
\begin{equation}
\sqrt{N}(\hat{\theta}_N-\theta_0)
\xrightarrow{d}
\mathcal{N}\left(
0,
I(\theta_0)^{-1}\Sigma I(\theta_0)^{-1}
\right).
\end{equation}
\end{proof}

\subsubsection{Proof of \zcref{thm:covariance-estimator-consistency}}

\begin{lemma}\label{lem:SecondMomentUniformLLN}
Suppose that \zcref{ass:parameter-space} and \zcref{differentiability}(ii) hold. Put
\begin{equation}
    S_N(\theta)
    :=
    \frac{1}{N}\sum_{i=1}^N
    \eta_i(\theta)\eta_i(\theta)^\top,
    \qquad \theta\in\Theta,
\end{equation}
and
\begin{equation}
    S(\theta)
    :=
    \mathbb{E}\left[\eta_1(\theta)\eta_1(\theta)^\top\right].
\end{equation}
Then \(S\) is continuous on \(\Theta\), and
\begin{equation}\label{eq:uniform-lln-SN-lemma}
    \sup_{\theta\in\Theta}
    \left\|S_N(\theta)-S(\theta)\right\|_{\mathrm{HS}}
    \convas 0 .
\end{equation}
\end{lemma}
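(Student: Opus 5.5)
The plan is to decompose \(\eta_i(\theta)\) into a part that is linear in random, \(\theta\)-free quantities with deterministic \(\theta\)-dependent coefficients. Then \(S_N(\theta)\) becomes a finite combination of empirical averages of i.i.d.\ random variables that do not depend on \(\theta\), multiplied by continuous deterministic functions of \(\theta\).

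Write \(d\Yvec[i]_t=\bform{\Lvec^{\beta_0},\rhosol{t,\theta_0,i}}\,dt+d\Wvec[i]_t\). This gives
\begin{equation}
    \eta_i(\theta)
    =
    \int_0^T \nabla_\theta h(t,\theta)\,d\Wvec[i]_t
    +
    \int_0^T \nabla_\theta h(t,\theta)\,
    \bigl\{\bform{\Lvec^{\beta_0},\rhosol{t,\theta_0,i}}-h(t,\theta)\bigr\}\,dt .
\end{equation}
Under \zcref{differentiability}(ii), \(t\mapsto\nabla_\theta h(t,\theta)\) is \(C^1\) in \(t\) by \zcref{lem:averaged-trajectory-time-regularity}. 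We integrate the stochastic integral by parts:
\begin{equation}
    \int_0^T \nabla_\theta h(t,\theta)\,d\Wvec[i]_t
    =
    \nabla_\theta h(T,\theta)\Wvec[i]_T-\int_0^T\partial_t\nabla_\theta h(t,\theta)\,\Wvec[i]_t\,dt .
\end{equation}
This expresses \(\eta_i(\theta)=F(\theta;Z_i)\), where \(Z_i:=(\Wvec[i],X_i)\) is an i.i.d.\ sequence of \(C([0,T];\mathbb R^{2r})\)-valued random elements, with \(X_i(t)=\bform{\Lvec^{\beta_0},\rhosol{t,\theta_0,i}}\). The map \(F\) is affine and continuous in \(z\), and its coefficients are continuous in \(\theta\) uniformly. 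Concretely, there are deterministic bounds
\begin{equation}
    \abs{F(\theta;z)}\lesssim 1+\norm{z}_{C},
    \qquad
    \abs{F(\theta;z)-F(\theta';z)}\lesssim \omega(\abs{\theta-\theta'})(1+\norm{z}_C),
\end{equation}
where \(\omega\) is a modulus of continuity of \((\nabla_\theta h,\partial_t\nabla_\theta h,h)\) on the compact set \([0,T]\times\Theta\).

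Continuity of \(S\) follows from these bounds by dominated convergence. The dominating function is \((1+\norm{Z_1}_C)^2\), which is integrable because \(\mathbb E\sup_t\abs{\Wvec[1]_t}^2<\infty\) by Doob's inequality and \(X_1\) is bounded, as shown in the proof of \zcref{lem:basic-uniform-convergences}. For the uniform law of large numbers I would use the standard bracketing/compactness argument for parametric families (the uniform SLLN under a continuous-in-\(\theta\) integrand with integrable envelope, e.g.\ Le Cam/Jennrich). Fix \(\varepsilon>0\) and cover \(\Theta\) by finitely many balls \(B(\theta_m,\delta)\) with \(\omega(\delta)\) small. Then
\begin{equation}
    \sup_{\theta\in\Theta}\norm{S_N(\theta)-S(\theta)}_{\mathrm{HS}}
    \le
    \max_m\norm{S_N(\theta_m)-S(\theta_m)}_{\mathrm{HS}}
    +C\omega(\delta)\Bigl(\frac1N\sum_{i=1}^N(1+\norm{Z_i}_C)^2+\mathbb E(1+\norm{Z_1}_C)^2\Bigr).
\end{equation}
The first term tends to \(0\) a.s.\ by the scalar SLLN applied at finitely many points. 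The bracket in the second term converges a.s.\ to \(2\mathbb E(1+\norm{Z_1}_C)^2\). Letting \(\delta\downarrow0\) along a countable sequence gives \eqref{eq:uniform-lln-SN-lemma}.

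The main obstacle is the step that removes the \(\theta\)-dependence from the stochastic integral. Without it, \(\eta_i(\theta)\) is defined only up to null sets for each fixed \(\theta\), and one cannot take suprema. The integration by parts above handles this and is exactly where order (ii) of \zcref{differentiability} is used, through the time derivative of \(\nabla_\theta h\). Alternatively, one could invoke \zcref{lem:differentiation-stochastic-integral} to obtain a continuous version, but the explicit pathwise representation is cleaner and yields the Lipschitz-type bound directly.
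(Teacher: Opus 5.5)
Your argument is correct, but it takes a genuinely different route from the paper.

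\textbf{The paper's route.} It splits $\eta_i=A_i+B_i$. It uses Lemma~\ref{lem:differentiation-stochastic-integral} to write $\partial_\theta B_i=\int_0^T\nabla_\theta^2h(t,\theta)\,d\mathbf W^{(i)}_t$. It then combines the Burkholder--Davis--Gundy inequality with the Sobolev--Morrey embedding to get a continuous modification of $B_i$ with $\mathbb E\sup_\theta|B_i(\theta)|^\gamma<\infty$. Finally it applies the strong law of large numbers in the separable Banach space $C(\Theta;\mathbb R^{p\times p})$.

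\textbf{Your route.} You use the fact that the integrand is deterministic and differentiable in $t$. Integration by parts gives an explicit, everywhere-defined version $\eta_i(\theta)=F(\theta;Z_i)$ that is affine in the i.i.d.\ path $Z_i$. The finite-cover (Jennrich-type) uniform law of large numbers then needs only the scalar SLLN and Doob's second-moment bound.

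\textbf{What your route buys.} It avoids Sobolev embeddings and Banach-space limit theorems entirely. It also yields a canonical pathwise version of $\eta_i(\theta)$, so $\eta_i(\hat\theta_N)$ is unambiguous without appealing to a modification. It needs less smoothness than you say. Differentiating $\bar\rho_{t,\theta}=\rho_0+\int_0^t\mathcal L^\theta(\bar\rho_{s,\theta})\,ds$ in $\theta$ gives $\partial_t\nabla_\theta\bar\rho_{t,\theta}=\nabla_\theta[\mathcal L^\theta(\bar\rho_{t,\theta})]$, which involves only first $\theta$-derivatives of $H^\alpha$ and $L^\beta$. So the joint continuity of $\partial_t\nabla_\theta h$ on $[0,T]\times\Theta$ already holds under Assumption~\ref{differentiability}(i), and the integration by parts is not where (ii) enters. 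You need this joint continuity for your modulus $\omega$. It is slightly more than the Lipschitz-in-$t$ statement of Lemma~\ref{lem:averaged-trajectory-time-regularity}, so justify it from this identity rather than by citing that lemma. The paper needs (ii) only because it differentiates $\nabla_\theta h$ once more in $\theta$.

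\textbf{What the paper's route buys.} It requires no time regularity of the integrand. It reuses the same machinery as Lemmas~\ref{lem:StochIntVanishesGeneral} and~\ref{lem:StochIntVanishesCombined}, so it carries over to integrands for which pathwise integration by parts is unavailable.
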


\begin{proof}
For $i=1,\dots,N$, we write
\begin{equation}
    \eta_i(\theta)
    =
    A_i(\theta)+B_i(\theta),
\end{equation}
where
\begin{align}
    A_i(\theta)
    &:=
    \int_0^T
        \nabla_\theta h(t,\theta)
        \left\{
            \bform{\Lvec^{\beta_0},\rhosol{t,\theta_0,i}}
            -h(t,\theta)
        \right\}
    \,dt, \\
    B_i(\theta)
    &:=
    \int_0^T
        \nabla_\theta h(t,\theta)
    \,d\Wvec[i]_t .
\end{align}
The map \(\theta\mapsto A_i(\theta)\) is continuous and uniformly bounded,
because \(\nabla_\theta h\) and \(h\) are continuous and bounded on
\([0,T]\times\Theta\), and because \(\rhosol{t,\theta_0,i}\) is a density
matrix. Next, we justify the continuity of \(B_i\). Choose \(\gamma>\max\{p,4\}\). 
\begingroup
Applying \zcref{lem:differentiation-stochastic-integral} with \(\ell=1\)
to \(f=\nabla_\theta h\), we obtain
\endgroup
\begin{equation}
    \partial_\theta B_i(\theta)
    =
    \int_0^T
        \nabla_\theta^2 h(t,\theta)
    \,d\Wvec[i]_t .
\end{equation}
 Hence the boundedness of the first and second
\(\theta\)-derivatives of \(h\), together with the Burkholder--Davis--Gundy
inequality, yields
\begingroup
\begin{equation}
    \mathbb{E}\left[
        \|B_i\|_{W^{1,\gamma}(\Theta^\circ;\mathbb{R}^p)}^\gamma
    \right]<\infty .
\end{equation}
\endgroup
Since \(\gamma>p\) and \(\Theta\) is the closure of a bounded convex domain,
the Sobolev--Morrey embedding
\(W^{1,\gamma}(\Theta^\circ;\mathbb{R}^p)\hookrightarrow
C(\Theta;\mathbb{R}^p)\) implies that \(B_i\) admits a continuous modification and
\begin{equation}
    \mathbb{E}\left[
        \sup_{\theta\in\Theta}\norm{B_i(\theta)}_{\mathbb{R}^p}^\gamma
    \right]<\infty .
\end{equation}
Therefore, \(\eta_i(\theta)\eta_i(\theta)^\top\) is an integrable random element of
\(C(\Theta;\mathbb{R}^{p\times p})\), and hence
\(S(\theta)\) is continuous.
\begingroup
Since \(C(\Theta;\mathbb{R}^{p\times p})\) is separable and these random
elements are i.i.d., the strong law of large numbers in a separable Banach
space \cite[Corollary~7.10]{ledoux1991probability} yields
\begin{equation}
    \sup_{\theta\in\Theta}
    \left\|S_N(\theta)-S(\theta)\right\|_{\mathrm{HS}}
    \convas 0 .
\end{equation}
\endgroup
\end{proof}

\begin{proof}[Proof of \zcref{thm:covariance-estimator-consistency}]
For \(\theta\in\Theta\), put
\begin{equation}
    m_N(\theta)
    :=
    \frac{1}{N}\sum_{i=1}^N \eta_i(\theta).
\end{equation}
Then \(\hat\Sigma_N\) can be written as
\begin{equation}
    \hat\Sigma_N
    =
    S_N(\hat{\theta}_N)-m_N(\hat{\theta}_N)m_N(\hat{\theta}_N)^\top.
\end{equation}
By the definition of \(\eta_i(\theta)\),
\begin{equation}
    m_N(\theta)
    =
    \int_0^T \nabla_\theta h(t,\theta)\, d\Ybar_t
    -
    \int_0^T \nabla_\theta h(t,\theta) h(t,\theta)\,dt .
\end{equation}
Hence \zcref{lem:basic-uniform-convergences} with \(\ell=1\) gives
\begin{equation}\label{eq:mN-uniform-convergence-covariance-proof}
    \sup_{\theta\in\Theta}
    \norm{
        m_N(\theta)-m(\theta)
    }_{\mathbb{R}^p}
    \convas 0,
    \qquad
    m(\theta)
    :=
    \mathbb{E}\left[\eta_1(\theta)\right].
\end{equation}
\begingroup
The map \(m\) is continuous on \(\Theta\), and
\(m(\theta_0)=\mathbb{E}[\eta_1(\theta_0)]=0\). Hence, by
\zcref{consistency} and the uniform convergence displayed above,
\[
    m_N(\hat\theta_N)\convas0.
\]
\endgroup
Moreover, by \zcref{lem:SecondMomentUniformLLN},
\begin{equation}
    \sup_{\theta\in\Theta}
    \left\|S_N(\theta)-S(\theta)\right\|_{\mathrm{HS}}
    \convas 0,
\end{equation}
and we have
\begin{align}
    &\left\|
        S_N(\hat\theta_N)-S(\theta_0)
    \right\|_{\mathrm{HS}} \notag\\
    &\qquad\leq
    \sup_{\theta\in\Theta}
    \left\|S_N(\theta)-S(\theta)\right\|_{\mathrm{HS}}
    +
    \left\|S(\hat\theta_N)-S(\theta_0)\right\|_{\mathrm{HS}}
    \convas 0 ,
\end{align}
\begingroup
Since \(S(\theta_0)=\Sigma\), combining the two preceding convergences with
\begin{equation}
    \hat\Sigma_N
    =S_N(\hat\theta_N)
    -m_N(\hat\theta_N)m_N(\hat\theta_N)^\top
\end{equation}
gives
\begin{equation}
    \hat\Sigma_N
    \convas
    \Sigma .
\end{equation}
\endgroup

By \zcref{consistency}, 
\begin{equation}
    \hat I_N
    \convas
    \int_0^T
        \nabla_\theta h(t,\theta_0)
        \nabla_\theta h(t,\theta_0)^\top
    \,dt
    =
    I(\theta_0).
\end{equation}
\begingroup
Since \(I(\theta_0)\) is positive definite, matrix inversion is continuous
at \(I(\theta_0)\). Hence,
\begin{equation}
    \hat I_N^{-1}\convas I(\theta_0)^{-1}.
\end{equation}
\endgroup
Combining this with \(\hat\Sigma_N\convas\Sigma\) yields
\begin{equation}
    \hat V_N
    =
    \hat I_N^{-1}\hat\Sigma_N\hat I_N^{-1}
    \convas
    I(\theta_0)^{-1}\Sigma I(\theta_0)^{-1}.
\end{equation}
\end{proof}

\subsection{Discretely observed case}
\label{subsec:proof-discretely-observed-case}

\subsubsection{Proof of \zcref{thm:discrete-consistency}}
\label{subsubsec:proof-discrete-consistency}

If \zcref{differentiability} holds with order \(\ell\ge0\), then
for \(t\in(t_{j-1},t_j]\),
\begin{equation}\label{eq:discrete-step-derivative-error}
    \partial_\theta^\ell h_n(t,\theta)
    =
    \partial_\theta^\ell h(t_{j-1},\theta),
\end{equation}
and by \zcref{lem:averaged-trajectory-time-regularity}, we have
\begin{equation}\label{eq:discrete-local-score-step-rate}
    \sup_{(t,\theta)\in[0,T]\times\Theta}
    \norm{
        \partial_\theta^\ell h_n(t,\theta)
        -
        \partial_\theta^\ell h(t,\theta)
    }_{\mathbb{R}^r}
    \le C\Delta_n .
\end{equation}

\begin{lemma}
\label{lem:discrete_vanish}
Suppose that \zcref{ass:parameter-space} holds.
\begin{itemize}
    \item[(i)] For each \(\ell=0,1,2\), suppose that \zcref{differentiability} holds with \(\ell\). Then
    \begin{equation}
    \sup_{N\ge1}\sup_{\theta\in\Theta}
    \abs{
    \int_0^T
        \{\partial_\theta^\ell h_n(t,\theta)
        -\partial_\theta^\ell h(t,\theta)\}
        \cdot
        \bform{\Lvec^{\beta_0},\rhobar{t,\theta_0,N}}
    \,dt
    }
    \to0
    \end{equation}
    as \(n\to\infty\).

    \item[(ii)] For each \(\ell=0,1\), suppose that \zcref{differentiability} holds with \(\ell+1\). Then
    \begin{equation}
    \begin{aligned}
        &\lim_{K\to\infty}
        \sup_{\substack{N\ge K\\ n\ge K}}
        \sup_{\theta\in\Theta}
        \abs{
            \int_0^T
            \{\partial_\theta^\ell h_n(t,\theta)-\partial_\theta^\ell h(t,\theta)\}
            \cdot d\Wbar_t
        }
        =0
        \quad\text{a.s.},
        \\
        &\lim_{K\to\infty}
        \sup_{\substack{N\ge K\\ n\ge K}}
        \sup_{\theta\in\Theta}
        \abs{
            \int_0^T
            \{\partial_\theta^\ell h_n(t,\theta)-\partial_\theta^\ell h(t,\theta)\}
            \cdot d\Ybar_t
        }
        =0
        \quad\text{a.s.}
    \end{aligned}
    \end{equation}
\end{itemize}
\end{lemma}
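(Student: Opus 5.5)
Part (i) is deterministic once we use that \(\rhobar{t,\theta_0,N}\) is a density operator. I would bound \(\norm{\bform{\Lvec^{\beta_0},\rhobar{t,\theta_0,N}}}_{\mathbb{R}^r}\) uniformly in \(t\), \(N\) and \(\omega\) by \(2(\sum_k\norm{L_k^{\beta_0}}_{\mathrm{op}}^2)^{1/2}\). Here I use that \(\abs{\Tr(A^*\rho)}\le\norm{A}_{\mathrm{op}}\) for a density operator \(\rho\), and that a convex combination of density operators is again one. Combining this with the step-function estimate \eqref{eq:discrete-local-score-step-rate}, which rests on \zcref{lem:averaged-trajectory-time-regularity}, the integrand is bounded by \(C\Delta_n\). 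Hence the supremum over \(N\) and \(\theta\) is at most \(CT\Delta_n\to0\).

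For part (ii), I first decompose \(d\Ybar_t=\bform{\Lvec^{\beta_0},\rhobar{t,\theta_0,N}}\,dt+d\Wbar_t\). The \(dt\)-part is then controlled by (i), with a bound independent of \(N\). So the second display follows from the first, and the whole task reduces to the stochastic integral
\[
Z_{N,n}(\theta):=\int_0^T g_n(t,\theta)\cdot d\Wbar_t,\qquad g_n:=\partial_\theta^\ell h_n-\partial_\theta^\ell h .
\]
Two facts about \(g_n\) are deterministic: \(\sup_{t,\theta}\abs{g_n}\le C\Delta_n\), and \(\sup_{t,\theta}\abs{\partial_\theta g_n}\le C\), the latter using Assumption~\ref{differentiability} with order \(\ell+1\). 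The plan is to show \(\sup_{\theta}\abs{Z_{N,n}(\theta)}\to0\) a.s. jointly in \(N,n\). I would use the Sobolev--Morrey argument of \zcref{lem:SecondMomentUniformLLN}, with \(\partial_\theta Z_{N,n}=\int_0^T\partial_\theta g_n\cdot d\Wbar_t\) justified by \zcref{lem:differentiation-stochastic-integral}. Since \(\Wbar=N^{-1/2}\tilde{\mathbf W}\) for a standard Wiener process \(\tilde{\mathbf W}\), Burkholder--Davis--Gundy with \(\gamma>\max\{p,2\}\) gives
\[
\mathbb{E}\sup_{\theta\in\Theta}\abs{Z_{N,n}(\theta)}^\gamma\lesssim \mathbb{E}\norm{Z_{N,n}}_{W^{1,\gamma}}^\gamma\lesssim N^{-\gamma/2}.
\]
This already gives uniform-in-\(n\) control. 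A sharper route would interpolate the two bounds to get something like \(N^{-\gamma/2}\Delta_n^{\gamma(1-\epsilon)}\), but that is not needed.

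The main obstacle is passing from moments to a.s. convergence that holds simultaneously over all \(n\ge K\) for \(N\ge K\). My first attempt is to avoid a union bound over \(n\) altogether. Write \(Z_{N,n}(\theta)=\int_0^T\partial_\theta^\ell h_n\cdot d\Wbar_t-\int_0^T\partial_\theta^\ell h\cdot d\Wbar_t\). The second term tends to \(0\) uniformly in \(\theta\) a.s. by \zcref{lem:basic-uniform-convergences}(ii). The first term is a Riemann sum,
\[
\sum_j\partial_\theta^\ell h(t_{j-1},\theta)\cdot\Delta_j\Wbar=\partial_\theta^\ell h(T^-,\theta)\cdot\Wbar_T-\sum_j\bigl(\partial_\theta^\ell h(t_j,\theta)-\partial_\theta^\ell h(t_{j-1},\theta)\bigr)\cdot\Wbar_{t_j}
\]
after summation by parts. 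By time-Lipschitz continuity of \(\partial_\theta^\ell h\) from \zcref{lem:averaged-trajectory-time-regularity}, this is bounded by \(C\sup_{t\le T}\abs{\Wbar_t}\), uniformly in \(n\) and \(\theta\). The term \(\sup_t\abs{\Wbar_t}\) tends to \(0\) a.s. by the Banach-space strong law of large numbers in \(C([0,T];\mathbb{R}^r)\), exactly as in the proof of \zcref{lem:basic-uniform-convergences}. Thus \(\sup_{n}\sup_\theta\abs{Z_{N,n}(\theta)}\le C\sup_t\abs{\Wbar_t}+\sup_\theta\abs{\int_0^T\partial_\theta^\ell h\cdot d\Wbar_t}\to0\) a.s. as \(N\to\infty\), which is stronger than needed.

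If the summation-by-parts bound needs more regularity than is available, the fallback is Borel--Cantelli on the moment bound with large \(\gamma\), giving \(\sum_N N^{-\gamma/2}<\infty\). This works because the bound is uniform in \(n\): I would apply it to \(\sup_n\) via the first representation, which is linear in \(\Wbar\) with \(n\)-uniform coefficients. The \(\Ybar\) statement then follows by adding the bound from (i).
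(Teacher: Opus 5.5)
Your proof is correct. Part (i) and the reduction of the $\Ybar$ statement to the $\Wbar$ statement match the paper, but you handle the stochastic term in (ii) by a different route.

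The paper does not split $Z_{N,n}$. It notes that under order $\ell+1$ the step estimate \eqref{eq:discrete-local-score-step-rate} also applies to $\partial_\theta g_n=\partial_\theta^{\ell+1}h_n-\partial_\theta^{\ell+1}h$, so both $g_n$ and $\partial_\theta g_n$ are bounded by $C\Delta_n$ (you recorded only $\abs{\partial_\theta g_n}\le C$). It then applies \zcref{lem:StochIntVanishesCombined}(ii) with $a_n=C\Delta_n$. Sobolev--Morrey and Burkholder--Davis--Gundy give $\mathbb{E}\sup_{\theta}\abs{Z_{N,n}(\theta)}^{2m}\lesssim\Delta_n^{2m}N^{-m}$, which is summable over the double index $(N,n)$, and Borel--Cantelli concludes.

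You instead write $Z_{N,n}$ as a Riemann sum minus the continuous integral. You bound the Riemann sum pathwise by Abel summation and the time-Lipschitz property from \zcref{lem:averaged-trajectory-time-regularity}, getting at most $C\sup_{t\le T}\abs{\Wbar_t}$ uniformly in $n$ and $\theta$. (The boundary coefficient should read $\partial_\theta^\ell h(t_{n-1},\theta)$, with the sum over $j\le n-1$.) The continuous integral is then handled by \zcref{lem:basic-uniform-convergences}(ii).

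Your route is more elementary for the $n$-dependent part and proves more: $\sup_{n\ge1}\sup_{\theta}\abs{Z_{N,n}(\theta)}\to0$ a.s. as $N\to\infty$ alone. It uses neither $n\to\infty$ nor the summability of $\Delta_n^{2m}$, and the extra derivative enters only through \zcref{lem:basic-uniform-convergences}(ii).

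The price is that the pathwise bound discards the factor $\Delta_n$: the total variation in $t$ of $g_n$ is $O(1)$, not $O(\Delta_n)$. So your bound is of order $N^{-1/2}$ rather than $o(N^{-1/2})$, and for a single trajectory it is not small at all. It therefore could not give the $\sqrt N$-scaled statement in \zcref{lem:discrete-local-hessian-comparison}(i) or the per-trajectory statement in \zcref{prop:discrete-score-contribution-comparison}. That is why the paper runs all three through one moment lemma.

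Your fallback paragraph would not work as written. A moment bound that is uniform in $n$ for each fixed $n$ does not control $\sup_n$, and Borel--Cantelli over $n$ needs the decay $\abs{\partial_\theta g_n}\le C\Delta_n$ that you did not record. Your main argument does not rely on it, though.
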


\begin{proof}
Since \(\rhobar{t,\theta_0,N}\) is a density operator,
\begin{equation}
\norm{\bform{\Lvec^{\beta_0},\rhobar{t,\theta_0,N}}}_{\mathbb{R}^r}
\le
2\left(
\sum_{k=1}^r
\norm{L_k^{\beta_0}}_{\mathrm{op}}^2
\right)^{1/2}.
\end{equation}
Therefore, by \eqref{eq:discrete-local-score-step-rate},
\begin{align}
&\sup_{N\ge1}\sup_{\theta\in\Theta}
\abs{
    \int_0^T
        \{\partial_\theta^\ell h_n(t,\theta)
        -\partial_\theta^\ell h(t,\theta)\}
        \cdot
        \bform{\Lvec^{\beta_0},\rhobar{t,\theta_0,N}}
    \,dt
}
\lesssim\Delta_n.
\end{align}
The right-hand side converges to zero as \(n\to\infty\), which proves (i).
For (ii), by \(\Delta_n=T/n\) and \eqref{eq:discrete-local-score-step-rate}, we can choose the integer \(m\) in
\zcref{lem:StochIntVanishesCombined}(ii) so that
\(\sum_{n=1}^\infty \Delta_n^{2m}<\infty\). Thus
\begin{equation}
    \lim_{K\to\infty}
    \sup_{\substack{N\ge K\\ n\ge K}}
    \sup_{\theta\in\Theta}
    \abs{
        \int_0^T
        \{\partial_\theta^\ell h_n(t,\theta)
        -\partial_\theta^\ell h(t,\theta)\}
        \cdot d\Wbar_t
    }
    =0
    \quad\text{a.s.}
\end{equation}
The second convergence in (ii) follows from the first one and (i).
\end{proof}

\begin{proof}[Proof of \zcref{thm:discrete-consistency}]
By the definitions of \(\Phi_{N,n}\) and \(\Phi_N\),
\begin{equation}\label{eq:discrete-consistency-contrast-difference}
    \Phi_{N,n}(\theta)-\Phi_N(\theta)
    =
    \int_0^T \{h_n(t,\theta)-h(t,\theta)\}\cdot d\Ybar_t
    -
    \frac12
    \int_0^T
        \{\norm{h_n(t,\theta)}_{\mathbb{R}^r}^2
        -\norm{h(t,\theta)}_{\mathbb{R}^r}^2\}\,dt .
\end{equation}
The first term on the right-hand side converges to zero uniformly in
\(\theta\in\Theta\), almost surely, as \(N\to\infty\) and \(n\to\infty\), by
\zcref{lem:discrete_vanish}:
\begin{equation}\label{eq:discrete-consistency-ybar-term-vanishes}
    \sup_{\theta\in\Theta}
    \abs{
        \int_0^T \{h_n(t,\theta)-h(t,\theta)\}\cdot d\Ybar_t
    }
    \convas0 .
\end{equation}
For the second term, the uniform continuity and boundedness of \(h\) on
\([0,T]\times\Theta\) imply
\begin{equation}\label{eq:discrete-consistency-square-term}
    \sup_{\theta\in\Theta}
    \abs{
    \int_0^T
        \{\norm{h_n(t,\theta)}_{\mathbb{R}^r}^2
        -\norm{h(t,\theta)}_{\mathbb{R}^r}^2\}\,dt
    }
    \to0 .
\end{equation}
Combining this with the proof of \zcref{consistency}, we obtain
\begin{equation}\label{eq:discrete-consistency-uniform-convergence}
    \sup_{\theta\in\Theta}
    \abs{\Phi_{N,n}(\theta)-\Phi(\theta)}\convas0.
\end{equation}
As in the proof of \zcref{consistency}, the deterministic contrast
\(\Phi\) has the unique maximizer \(\theta_0\) by
\zcref{distinctiveness}. Since \(\Theta\) is compact, the same argmax
argument gives
\begin{equation}\label{eq:discrete-consistency-final}
    \hat\theta_{N,n}\convas\theta_0.
\end{equation}
\end{proof}

\subsubsection{Asymptotic normality}
\label{subsubsec:proof-discrete-asymptotic-normality}

\begin{lemma}\label{lem:discrete-local-hessian-comparison}
    Suppose that \zcref{ass:parameter-space} holds.
    \begin{itemize}
        \item [(i)] If \zcref{differentiability}(ii) holds,
\(n=n(N)\to\infty\) and \(\sqrt{N}\Delta_n\to0\), then, for every neighbourhood \(U\Subset\Theta^\circ\) of \(\theta_0\),
\begin{equation}
    \sqrt{N}
    \sup_{\theta\in U}
    \norm{
        \nabla_\theta\Phi_{N,n}(\theta)
        -\nabla_\theta\Phi_N(\theta)
    }_{\mathbb{R}^p}
    \convp0 .
\end{equation}
\item[(ii)]
If \zcref{differentiability}(iii) holds and
\(n=n(N)\to\infty\), then, for every neighbourhood
\(U\Subset\Theta^\circ\) of \(\theta_0\),
\begin{equation}
    \sup_{\theta\in U}
    \norm{
        \nabla_\theta^2\Phi_{N,n}(\theta)
        -
        \nabla_\theta^2\Phi_N(\theta)
    }_{\mathrm{HS}}
    \convp0 .
\end{equation}
    \end{itemize}
\end{lemma}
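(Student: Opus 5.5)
For both parts I would decompose the difference of derivatives using the explicit formulas \eqref{eq:first-derivative-h}, \eqref{eq:second-derivative-h}, \eqref{eq:discrete-score} and \eqref{eq:discrete-hessian}. I would then substitute $d\Ybar_t=\bform{\Lvec^{\beta_0},\rhobar{t,\theta_0,N}}\,dt+d\Wbar_t$. This gives
\begin{align}
\nabla_\theta\Phi_{N,n}(\theta)-\nabla_\theta\Phi_N(\theta)
&=\int_0^T\{\nabla_\theta h_n-\nabla_\theta h\}(t,\theta)\,d\Wbar_t
+\int_0^T\{\nabla_\theta h_n-\nabla_\theta h\}(t,\theta)\,\bform{\Lvec^{\beta_0},\rhobar{t,\theta_0,N}}\,dt\notag\\
&\quad-\int_0^T\bigl\{\nabla_\theta h_n(t,\theta)h_n(t,\theta)-\nabla_\theta h(t,\theta)h(t,\theta)\bigr\}\,dt .
\end{align}
The Hessian difference splits the same way: there is a stochastic integral of $\nabla_\theta^2h_n-\nabla_\theta^2h$ against $d\Wbar_t$, plus Lebesgue integrals.

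\textbf{Part (ii).} The Lebesgue terms are bounded uniformly in $\theta\in\Theta$ and $N$ by $C\Delta_n$. This uses \eqref{eq:discrete-local-score-step-rate} with $\ell=0,1,2$, the boundedness of $h,\nabla_\theta h,\nabla_\theta^2 h$, and the fact that $\rhobar{t,\theta_0,N}$ is a density operator, exactly as in \zcref{lem:discrete_vanish}(i). These terms therefore vanish as $n\to\infty$. For the stochastic term I would invoke \zcref{lem:discrete_vanish}(ii) with $\ell=1$ applied componentwise; this is where order $\ell+1=2$, hence \zcref{differentiability}(ii), is needed. Alternatively one can argue directly: the term is $N^{-1/2}$ times a centred Gaussian-type process in $\theta$. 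Either way the supremum over $U$, indeed over $\Theta$, tends to $0$ almost surely, hence in probability. One caveat is that we now need the $\nabla^2_\theta h$ version, i.e.\ $\ell=2$, so the order-3 assumption \zcref{differentiability}(iii) is exactly what makes $\ell+1=3$ available. I would appeal to \zcref{lem:StochIntVanishesCombined}, which the proof of \zcref{lem:discrete_vanish} already uses, with $f=\partial_\theta^2 h_n-\partial_\theta^2 h$.

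\textbf{Part (i).} Multiplying by $\sqrt N$, the deterministic terms are at most $C\sqrt N\Delta_n\to0$, uniformly in $\theta$, by the same bounds. The remaining term is
\[
\sqrt N\sup_{\theta\in U}\Bigl|\int_0^T\{\nabla_\theta h_n-\nabla_\theta h\}(t,\theta)\,d\Wbar_t\Bigr|
=\sup_{\theta\in U}\bigl|Z_N(\theta)\bigr| ,
\]
where $Z_N(\theta):=\int_0^T g_n(t,\theta)\,d\widetilde{\mathbf W}_t$, $g_n:=\nabla_\theta h_n-\nabla_\theta h$, and $\widetilde{\mathbf W}:=\sqrt N\,\Wbar$ is again a standard $r$-dimensional Wiener process. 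Since $g_n$ is deterministic, $Z_N$ is Gaussian with $\mathbb E|Z_N(\theta)|^\gamma\lesssim \sup_t|g_n(t,\theta)|^\gamma\lesssim\Delta_n^\gamma$. By \zcref{lem:differentiation-stochastic-integral}, $\partial_\theta Z_N(\theta)=\int_0^T\partial_\theta g_n\,d\widetilde{\mathbf W}_t$, with $\partial_\theta g_n=\nabla_\theta^2 h_n-\nabla_\theta^2h$. Under \zcref{differentiability}(ii), \eqref{eq:discrete-local-score-step-rate} with $\ell=2$ requires the time regularity of $\partial_\theta^2h$, which \zcref{lem:averaged-trajectory-time-regularity} provides. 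If only uniform continuity were available, one would still have $\sup|\partial_\theta g_n|\to0$. Combining these with BDG gives $\mathbb E\|Z_N\|^\gamma_{W^{1,\gamma}(U)}\to0$ for $\gamma>p$. The Sobolev--Morrey embedding, as in the proof of \zcref{lem:SecondMomentUniformLLN}, then yields $\mathbb E\sup_{U}|Z_N|^\gamma\to0$, and Markov's inequality gives convergence in probability.

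\textbf{Main obstacle.} The delicate point is this $\sqrt N$-scaled stochastic term: almost-sure lemmas no longer suffice, and we need the moment bound to be uniform over $\theta$. The Sobolev embedding argument handles this, provided the second $\theta$-derivative of the discretization error is controlled, which is where \zcref{differentiability}(ii) enters. Restricting to $U\Subset\Theta^\circ$ lets us use a Lipschitz domain for the embedding; alternatively one can use $U$'s convex hull or a small ball.
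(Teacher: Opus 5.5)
Your proposal is correct and takes essentially the paper's route. You use the same split into a $d\Wbar$-martingale term plus finite-variation terms of order $\Delta_n$, which become $O(\sqrt N\Delta_n)$ after scaling in (i). The martingale term is controlled by the BDG plus Sobolev--Morrey moment bound that the paper packages as \zcref{lem:StochIntVanishesCombined}(i); you re-derive it inline via the observation that $\sqrt N\,\Wbar$ is a standard Wiener process. The one slip is your first appeal in (ii) to \zcref{lem:discrete_vanish}(ii) with $\ell=1$, which covers the score's martingale term rather than the Hessian's. Your subsequent switch to $f_n=\nabla_\theta^2h_n-\nabla_\theta^2h$ under order $3$ is exactly what the paper does.
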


\begin{proof}
By \eqref{eq:discrete-score} and \eqref{eq:first-derivative-h} , for
\(\theta\in U\),
\begin{align}
\nabla_\theta\Phi_{N,n}(\theta)-\nabla_\theta\Phi_N(\theta)
&=
    \int_0^T
        \{\nabla_\theta h_n(t,\theta)-\nabla_\theta h(t,\theta)\}
        \{d\Ybar_t-h(t,\theta)\,dt\} \notag\\
&\quad+
    \int_0^T
        \nabla_\theta h_n(t,\theta)
        \{h(t,\theta)-h_n(t,\theta)\}\,dt .
    \label{eq:discrete-local-score-decomposition}
\end{align}
From \eqref{eq:discrete-local-score-step-rate}, finite-variation terms in the above decomposition are bounded by \(C\Delta_n\). Applying
\zcref{lem:StochIntVanishesCombined} componentwise gives
\begin{equation}\label{eq:discrete-local-score-martingale-bound}
    \sqrt{N}
    \sup_{\theta\in U}
    \norm{
        \int_0^T
        \{\nabla_\theta h_n(t,\theta)-\nabla_\theta h(t,\theta)\}
        \cdot d\Wbar_t
    }_{\mathbb{R}^p}
    \convp0 .
\end{equation}
Together with \(\sqrt{N}\Delta_n\to0\), this proves (i).
\begingroup
For (ii), using the displayed formulas for the continuous and discrete
Hessians, for \(\theta\in U\),
\begin{align*}
&\nabla_\theta^2\Phi_{N,n}(\theta)-\nabla_\theta^2\Phi_N(\theta) \notag\\
&\quad=
\int_0^T
\{\nabla_\theta^2h_n(t,\theta)-\nabla_\theta^2h(t,\theta)\}
\cdot\{d\Ybar_t-h(t,\theta)\,dt\} \notag\\
&\qquad+
\int_0^T \nabla_\theta^2h_n(t,\theta)
\cdot\{h(t,\theta)-h_n(t,\theta)\}\,dt \notag\\
&\qquad-
\int_0^T
\left\{
\nabla_\theta h_n(t,\theta)\nabla_\theta h_n(t,\theta)^\top
-
\nabla_\theta h(t,\theta)\nabla_\theta h(t,\theta)^\top
\right\}\,dt .
\end{align*}
All finite-variation terms are \(O(\Delta_n)\), uniformly on \(U\), by
\eqref{eq:discrete-local-score-step-rate} and boundedness of the relevant
derivatives. For the martingale term, apply
\zcref{lem:StochIntVanishesCombined}(i), componentwise, to
\[
f_n(t,\theta)=\nabla_\theta^2h_n(t,\theta)-\nabla_\theta^2h(t,\theta).
\]
Under \(\ell=3\), both \(f_n\) and \(\partial_\theta f_n\) are bounded by
\(C\Delta_n\). Hence the martingale term is \(o_P(1)\), uniformly on \(U\),
and (ii) follows.
\endgroup
\end{proof}
\begin{proof}[Proof of \zcref{thm:discrete-asymptotic-normality}]
It is enough to argue along an arbitrary sequence \((N,n)\) satisfying
\(N\to\infty\), \(n\to\infty\), and
\(\sqrt{N}\Delta_n\to0\). Along such a sequence, we write
\(n=n(N)\). Since
\(\theta_0\in\Theta^\circ\), we have
\(\hat\theta_{N,n}\in\Theta^\circ\) eventually almost surely. Taylor's theorem gives
\begin{align}
0
&=
\sqrt{N}\,\nabla_\theta\Phi_{N,n}(\theta_0)+
\left\{
\int_0^1
\nabla_\theta^2\Phi_{N,n}
\bigl(
\theta_0+s(\hat\theta_{N,n}-\theta_0)
\bigr)\,ds
\right\}
\sqrt{N}(\hat\theta_{N,n}-\theta_0).
\end{align}

By \zcref{lem:discrete-local-hessian-comparison}(i), we have
\begin{equation}
\sqrt{N}
\left\{
\nabla_\theta\Phi_{N,n}(\theta_0)
-
\nabla_\theta\Phi_N(\theta_0)
\right\}
\convp0.
\end{equation}
Together with \zcref{prop:ScoreCLT}, this yields
\begin{equation}
\sqrt{N}\,\nabla_\theta\Phi_{N,n}(\theta_0)
\xrightarrow{d}
\mathcal{N}(0,\Sigma).
\end{equation}

Moreover,
\begin{equation}
\sup_{0\le s\le1}
\norm{
\theta_0+s(\hat\theta_{N,n}-\theta_0)-\theta_0
}_{\mathbb{R}^p}
\le
\norm{\hat\theta_{N,n}-\theta_0}_{\mathbb{R}^p}
\convp0.
\end{equation}
Hence, by \zcref{lem:discrete-local-hessian-comparison}(ii) and
the same argument used to prove \eqref{HessianConvergence},
\begin{equation}
\int_0^1
\nabla_\theta^2\Phi_{N,n}
\bigl(
\theta_0+s(\hat\theta_{N,n}-\theta_0)
\bigr)\,ds
\convp
-I(\theta_0).
\end{equation}
Since \(I(\theta_0)\) is positive definite, Slutsky's theorem gives
\begin{equation}
\sqrt{N}(\hat\theta_{N,n}-\theta_0)
\xrightarrow{d}
\mathcal{N}\left(
0,
I(\theta_0)^{-1}\Sigma I(\theta_0)^{-1}
\right).
\end{equation}
This completes the proof.
\end{proof}

\subsubsection{Proof of \zcref{thm:discrete-covariance-estimator-consistency}}
\label{subsubsec:proof-discrete-covariance-consistency}

For \(\theta\in\Theta\), put
\begin{equation}\label{eq:discrete-continuous-mean-second-definitions}
    m_{N,n}(\theta)
    :=
    \frac{1}{N}\sum_{i=1}^N \eta_{i,n}(\theta),
    \qquad
    S_{N,n}(\theta)
    :=
    \frac{1}{N}\sum_{i=1}^N
    \eta_{i,n}(\theta)\eta_{i,n}(\theta)^\top,
\end{equation}
We first compare the discrete score contributions with their continuous
counterparts.
\begin{proposition}
\label{prop:discrete-score-contribution-comparison}
Suppose that \zcref{ass:parameter-space} and \zcref{differentiability}(ii)
hold. Then as \(N\to\infty\) and
\(n\to\infty\),
\begin{equation}\label{eq:discrete-mean-comparison-covariance-proof}
    \sup_{\theta\in\Theta}
    \norm{m_{N,n}(\theta)-m_N(\theta)}_{\mathbb{R}^p}
    \convas0,
\qquad
    \sup_{\theta\in\Theta}
    \norm{S_{N,n}(\theta)-S_N(\theta)}_{\mathrm{HS}}
    \convas0.
\end{equation}
\end{proposition}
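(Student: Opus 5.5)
We compare the discrete and continuous score contributions trajectory by trajectory. For each \(i\) and \(\theta\), write
\begin{equation}
    \eta_{i,n}(\theta)-\eta_i(\theta)
    =
    \int_0^T
        \{\nabla_\theta h_n(t,\theta)-\nabla_\theta h(t,\theta)\}
        \{d\Yvec[i]_t-h(t,\theta)\,dt\}
    +
    \int_0^T
        \nabla_\theta h_n(t,\theta)\{h(t,\theta)-h_n(t,\theta)\}\,dt
    =: D_{i,n}(\theta).
\end{equation}
Averaging over \(i\) gives \(m_{N,n}(\theta)-m_N(\theta)=\nabla_\theta\Phi_{N,n}(\theta)-\nabla_\theta\Phi_N(\theta)\). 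This is exactly the decomposition \eqref{eq:discrete-local-score-decomposition}, now taken over all of \(\Theta\) rather than a neighbourhood \(U\).

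\textbf{The mean.} The finite-variation parts are \(O(\Delta_n)\) uniformly in \(\theta\) by \eqref{eq:discrete-local-score-step-rate}. This covers the term with \(h(t,\theta)\,dt\), the drift part \(\bform{\Lvec^{\beta_0},\rhobar{t,\theta_0,N}}\,dt\) of \(d\Ybar_t\) (bounded because \(\rhobar{t,\theta_0,N}\) is a density operator), and the last integral. The remaining martingale term \(\int_0^T\{\nabla_\theta h_n-\nabla_\theta h\}\cdot d\Wbar_t\) is handled by \zcref{lem:discrete_vanish}(ii) with \(\ell=1\), which needs \zcref{differentiability}(ii). That result gives almost sure uniform vanishing as \(N,n\to\infty\) jointly. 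This proves the first assertion.

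\textbf{The second moments.} Use the algebraic identity
\begin{equation}
    \eta_{i,n}\eta_{i,n}^\top-\eta_i\eta_i^\top
    =
    D_{i,n}\eta_i^\top+\eta_i D_{i,n}^\top+D_{i,n}D_{i,n}^\top .
\end{equation}
By Cauchy--Schwarz over the empirical average,
\begin{equation}
    \sup_\theta\norm{S_{N,n}-S_N}_{\mathrm{HS}}
    \lesssim
    \Bigl(\sup_\theta\tfrac1N\textstyle\sum_i\norm{D_{i,n}(\theta)}^2\Bigr)^{1/2}
    \Bigl(\tfrac1N\textstyle\sum_i\sup_\theta\norm{\eta_i(\theta)}^2\Bigr)^{1/2}
    +\sup_\theta\tfrac1N\textstyle\sum_i\norm{D_{i,n}(\theta)}^2 .
\end{equation}
The second factor of the first term is bounded almost surely by the ordinary SLLN. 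The needed integrability \(\mathbb E\sup_\theta\norm{\eta_1(\theta)}^2<\infty\) was established in the proof of \zcref{lem:SecondMomentUniformLLN} via the Sobolev--Morrey argument.

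It therefore suffices to show \(\frac1N\sum_i\sup_\theta\norm{D_{i,n}(\theta)}^2\to0\) almost surely. Since \(D_{i,n}=O(\Delta_n)+M_{i,n}\) with \(M_{i,n}(\theta):=\int_0^T f_n(t,\theta)\,d\Wvec[i]_t\) and \(f_n=\nabla_\theta h_n-\nabla_\theta h\), the task reduces to controlling \(Z_{i,n}:=\sup_\theta\norm{M_{i,n}(\theta)}^2\). Under \zcref{differentiability}(ii), both \(f_n\) and \(\partial_\theta f_n\) are bounded by \(C\Delta_n\). The same Burkholder--Davis--Gundy plus Sobolev--Morrey argument as in \zcref{lem:SecondMomentUniformLLN}, with \(\gamma>\max\{p,4\}\), then gives \(\mathbb E[Z_{i,n}^{q}]\lesssim\Delta_n^{2q}\) for any fixed \(q\ge1\), with the constant independent of \(N\), \(i\) and \(n\).

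\textbf{Main obstacle: joint almost sure convergence.} Here \(n\) and \(N\) vary together, so the SLLN cannot be applied for fixed \(n\). My plan is to bound \(\bar Z_{N,n}:=\frac1N\sum_{i\le N}Z_{i,n}\) through higher moments. By Jensen or Minkowski, \(\mathbb E[\bar Z_{N,n}^q]\le\max_i\mathbb E[Z_{i,n}^q]\lesssim\Delta_n^{2q}\), uniformly in \(N\). Markov's inequality then gives \(P(\sup_{N}\bar Z_{N,n}>\varepsilon)\) summable in \(n\), provided the \(\sup_N\) is controlled.

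To control it, I would use that \(N\mapsto\frac1N\sum_{i\le N}(Z_{i,n}-\mathbb E Z_{i,n})\) is a reverse martingale for fixed \(n\), since the \(Z_{i,n}\) are i.i.d. in \(i\). Doob's maximal inequality then gives \(\mathbb E\sup_N\bar Z_{N,n}^q\lesssim\mathbb E Z_{1,n}^q\lesssim\Delta_n^{2q}\). Choosing \(q\) large enough that \(\sum_n\Delta_n^{2q}<\infty\), Borel--Cantelli yields \(\sup_N\bar Z_{N,n}\to0\) almost surely as \(n\to\infty\). This is the same mechanism as the choice of \(m\) in \zcref{lem:StochIntVanishesCombined}(ii). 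It gives the second assertion of \eqref{eq:discrete-mean-comparison-covariance-proof} in the same joint sense, \(\lim_{K\to\infty}\sup_{N,n\ge K}=0\) almost surely.
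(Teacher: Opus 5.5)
Your proof is correct. Its skeleton matches the paper's: the same decomposition of $\eta_{i,n}(\theta)-\eta_i(\theta)$, the $O(\Delta_n)$ bound on the finite-variation parts, the three-term expansion of $S_{N,n}-S_N$, and Cauchy--Schwarz.

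The genuine difference is the key probabilistic step, namely the almost sure vanishing of $\frac1N\sum_{i\le N}\sup_\theta\norm{\int_0^T f_n(t,\theta)\,d\Wvec[i]_t}^2$.

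\textbf{The paper's route.} It simply invokes \zcref{lem:StochIntVanishesCombined}(iii) with $a_n=C\Delta_n$; you could have cited that part directly instead of re-deriving it. That lemma bounds the \emph{centered} empirical mean by $\mathbb{E}|\cdot|^{2m}\lesssim a_n^{4m}/N^m$. It then applies Borel--Cantelli over all pairs $(N,n)$, which needs $m>1$ so that $\sum_N N^{-m}<\infty$. The conclusion is only the double-index statement $\lim_{K}\sup_{N,n\ge K}$.

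\textbf{Your route.} You use the reverse-martingale form of Doob's inequality, $\mathbb{E}\sup_N\bar Z_{N,n}^q\le (q/(q-1))^q\,\mathbb{E}Z_{1,n}^q$ for $q>1$. This controls $\sup_N$ at once, so Borel--Cantelli runs over $n$ alone. You therefore get the stronger conclusion $\lim_{n\to\infty}\sup_{N\ge1}\frac1N\sum_{i\le N}\sup_\theta\norm{D_{i,n}(\theta)}^2=0$ a.s. It also makes clear that no averaging over trajectories is needed, since each trajectory's martingale error is already $O(\Delta_n)$ in every $L^q$. The centered/uncentered mismatch in your Doob step is harmless, because $\bar Z_{N,n}\ge0$ is itself a reverse martingale.

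\textbf{Cosmetic differences.} For the mean, you apply \zcref{lem:discrete_vanish}(ii) to the averaged integral. The paper instead deduces it from the empirical $L^2$ bound via $\norm{m_{N,n}-m_N}\le(\frac1N\sum_i\norm{\delta_{i,n}}^2)^{1/2}$. For the $\eta$-factor, you use the scalar SLLN for $\sup_\theta\norm{\eta_i(\theta)}^2$, with the integrability established in the proof of \zcref{lem:SecondMomentUniformLLN}. The paper uses that lemma's uniform convergence of $S_N$ instead.
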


\begin{proof}
From \eqref{eq:eta-i-theta-definition} and \eqref{eq:discrete-eta-i-theta}, for each \(i\),
\begin{align}
\delta_{i,n}(\theta):&=
    \eta_{i,n}(\theta)-\eta_i(\theta)\\
&=
    \int_0^T
        \{\nabla_\theta h_n(t,\theta)-\nabla_\theta h(t,\theta)\}
        \{d\Yvec[i]_t-h(t,\theta)\,dt\}
    \notag\\
&\quad+
    \int_0^T
        \nabla_\theta h_n(t,\theta)
        \{h(t,\theta)-h_n(t,\theta)\}\,dt .
    \label{eq:delta-i-n-decomposition}
\end{align}
From \eqref{eq:discrete-local-score-step-rate}, finite-variation terms in the above decomposition are bounded by \(C\Delta_n\), uniformly in \(i\) and \(\theta\). Since \(\Delta_n=T/n\),
\zcref{lem:StochIntVanishesCombined}, applied componentwise with \(a_n=C\Delta_n\), gives
\begin{equation}
    \lim_{K\to\infty}
    \sup_{\substack{N\ge K\\ n\ge K}}
    \frac{1}{N}\sum_{i=1}^N
    \sup_{\theta\in\Theta}
    \norm{\int_0^T
        \{\nabla_\theta h_n(t,\theta)-\nabla_\theta h(t,\theta)\}
        \cdot d\Wvec[i]_t}_{\mathbb{R}^p}^2
    =0
    \quad\text{a.s.}
\end{equation}
Therefore,
\begin{equation}
\begin{aligned}
    \sup_{\theta\in\Theta}
    \norm{m_{N,n}(\theta)-m_N(\theta)}_{\mathbb{R}^p}
    &\le
    \left(
        \sup_{\theta\in\Theta}
        \frac{1}{N}\sum_{i=1}^N
        \norm{\delta_{i,n}(\theta)}_{\mathbb{R}^p}^2
    \right)^{1/2}
    \convas0.
\end{aligned}
\end{equation}
Furthermore,
\begin{align}
&S_{N,n}(\theta)-S_N(\theta) \notag\\
&\quad=
    \frac{1}{N}\sum_{i=1}^N
    \left\{
        \delta_{i,n}(\theta)\eta_i(\theta)^\top
        +\eta_i(\theta)\delta_{i,n}(\theta)^\top
        +\delta_{i,n}(\theta)\delta_{i,n}(\theta)^\top
    \right\}.
\end{align}
The last term converges to \(0\) almost surely by the preceding estimate. By the Cauchy--Schwarz inequality,
\begin{align}
&\sup_{\theta\in\Theta}
\norm{
    \frac{1}{N}\sum_{i=1}^N
    \delta_{i,n}(\theta)\eta_i(\theta)^\top
}_{\mathrm{HS}}\le
\left(
    \sup_{\theta\in\Theta}
    \frac{1}{N}\sum_{i=1}^N
    \norm{\delta_{i,n}(\theta)}_{\mathbb{R}^p}^2
\right)^{1/2}
\left(
    \sup_{\theta\in\Theta}
    \frac{1}{N}\sum_{i=1}^N
    \norm{\eta_i(\theta)}_{\mathbb{R}^p}^2
\right)^{1/2}.
    \label{eq:cross-term-cauchy-schwarz}
\end{align}
\begingroup
The first factor on the right-hand side converges to \(0\) almost surely by
the preceding estimate. By \zcref{lem:SecondMomentUniformLLN} and the continuity of \(S\) on
the compact set \(\Theta\), the second factor is almost surely bounded for
all sufficiently large \(N\). Thus the cross term converges to zero almost
surely. Its transpose is handled identically, while the
\(\delta_{i,n}\delta_{i,n}^\top\) term converges to zero by the preceding
estimate. This completes the proof.
\endgroup
\end{proof}

\begin{proof}[Proof of \zcref{thm:discrete-covariance-estimator-consistency}]
We prove the asserted almost sure convergence in the double-index sense
\(N\to\infty\), \(n\to\infty\).
By the definition of $\hat\Sigma_{N,n}$,
\begin{equation}\label{eq:discrete-covariance-decomposition}
    \hat\Sigma_{N,n}
    =
    S_{N,n}(\hat\theta_{N,n})
    -m_{N,n}(\hat\theta_{N,n})m_{N,n}(\hat\theta_{N,n})^\top .
\end{equation}
Combining \zcref{prop:discrete-score-contribution-comparison} and the proof of \zcref{thm:covariance-estimator-consistency},
\begin{equation}
    \sup_{\theta\in\Theta}\norm{m_{N,n}(\theta)-m(\theta)}_{\mathbb{R}^p}\convas0.
\end{equation}
By \zcref{thm:discrete-consistency} and \(m(\theta_0)=0\), we obtain
\begin{equation}\label{eq:discrete-covariance-mean-vanishes}
    m_{N,n}(\hat\theta_{N,n})
    \convas0 .
\end{equation}
By \zcref{prop:discrete-score-contribution-comparison} and \zcref{lem:SecondMomentUniformLLN},
\begin{equation}\label{eq:discrete-covariance-second-comparison}
    \sup_{\theta\in\Theta}
    \norm{S_{N,n}(\theta)-S(\theta)}_{\mathrm{HS}}
    \convas0,
\end{equation}
and we have
\begin{equation}\label{eq:discrete-covariance-second-moment-converges}
    S_{N,n}(\hat\theta_{N,n})
    \convas S(\theta_0)=\Sigma .
\end{equation}
Therefore,
\begin{equation}
    \hat\Sigma_{N,n}\convas\Sigma .
\end{equation}
It remains to prove $\hat V_{N,n}\convas I(\theta_0)^{-1}\Sigma I(\theta_0)^{-1}$. Note that
\begin{align}
&\norm{
    \nabla_\theta h_n(t,\theta)\nabla_\theta h_n(t,\theta)^\top
    -
    \nabla_\theta h(t,\theta)\nabla_\theta h(t,\theta)^\top
}_{\mathrm{HS}} \lesssim
\norm{\nabla_\theta h_n(t,\theta)-\nabla_\theta h(t,\theta)}_{\mathrm{HS}}.
\end{align}
By \eqref{eq:discrete-local-score-step-rate}, we have
\begin{equation}
\sup_{\theta\in\Theta}
\norm{
    \int_0^T
        \nabla_\theta h_n(t,\theta)
        \nabla_\theta h_n(t,\theta)^\top\,dt
    -
    \int_0^T
        \nabla_\theta h(t,\theta)
        \nabla_\theta h(t,\theta)^\top\,dt
}_{\mathrm{HS}}
\to0 .
\end{equation}
Together with \zcref{thm:discrete-consistency}, this yields
\begin{equation}
    \hat I_{N,n}\convas I(\theta_0).
\end{equation}
\begingroup
Since \(I(\theta_0)\) is positive definite, matrix inversion is continuous
on a neighbourhood of \(I(\theta_0)\), so
\(\hat I_{N,n}^{-1}\convas I(\theta_0)^{-1}\). Consequently,
\endgroup
\begin{equation}
    \hat V_{N,n}
    =
    \hat I_{N,n}^{-1}\hat\Sigma_{N,n}\hat I_{N,n}^{-1}
    \convas
    I(\theta_0)^{-1}\Sigma I(\theta_0)^{-1}.
\end{equation}
This completes the proof of \zcref{thm:discrete-covariance-estimator-consistency}.
\end{proof}

\endgroup

\section{Numerical example}
\label{sec:numerical-example}

This section is devoted to a numerical examination of the theoretical results
for a continuously monitored two-level system.  We first specify an example
system for which the deterministic observation drift is available explicitly
through the Bloch representation.  We then describe the simulation design for
the sample paths and the numerical maximization of the contrast.  Finally, a
Monte Carlo study examines the consistency of the parameter and covariance
estimators, followed by the marginal and joint distributional behavior relevant
to asymptotic normality and the mesh-balance condition.

\subsection{Example system}
\label{subsec:numerical-model-implementation}

Let
\[
    \sigma_x=
    \begin{pmatrix}
        0&1\\
        1&0
    \end{pmatrix},
    \qquad
    \sigma_y=
    \begin{pmatrix}
        0&-i\\
        i&0
    \end{pmatrix},
    \qquad
    \sigma_z=
    \begin{pmatrix}
        1&0\\
        0&-1
    \end{pmatrix}.
\]
We consider the one-dimensional observation case \(r=1\) and set
\begin{equation}\label{eq:numerical-model-HL}
    H^\alpha=\frac{\alpha}{2}\sigma_x,
    \qquad
    L^\beta=\beta\sigma_z,
    \qquad
    \theta=(\alpha,\beta)^\top.
\end{equation}
Writing the averaged density matrix in Bloch form as
\begin{equation}
    \rhoave{t,\theta}
    =\frac{1}{2}
    \left(I+x_t^\theta\sigma_x+y_t^\theta\sigma_y+z_t^\theta\sigma_z\right),
\end{equation}
the deterministic averaged equation reduces to
\begin{equation}\label{eq:numerical-averaged-bloch}
    \dot{x}_t^\theta=-2\beta^2x_t^\theta,
    \qquad
    \dot{y}_t^\theta=-2\beta^2y_t^\theta-\alpha z_t^\theta,
    \qquad
    \dot{z}_t^\theta=\alpha y_t^\theta.
\end{equation}
The corresponding deterministic observation drift is
\begin{equation}\label{eq:numerical-observation-drift}
    h(t,\theta)
    =
    \Tr\!\left(
        L^\beta\rhoave{t,\theta}
        +\rhoave{t,\theta}(L^\beta)^*
    \right)
    =
    2\beta z_t^\theta.
\end{equation}

For the numerical experiment we take
\begin{equation}\label{eq:numerical-initial-state}
    \rho_0=\frac{I+\sigma_z}{2},
    \qquad
    x_0^\theta=y_0^\theta=0,
    \qquad
    z_0^\theta=1.
\end{equation}
The \((y,z)\)-subsystem is linear with constant coefficients, and therefore
\begin{equation}\label{eq:numerical-exact-bloch-flow}
    \begin{pmatrix}y_t^\theta\\ z_t^\theta\end{pmatrix}
    =
    \exp\!\left[
        t\begin{pmatrix}
            -2\beta^2 & -\alpha\\
            \alpha & 0
        \end{pmatrix}
    \right]
    \begin{pmatrix}0\\1\end{pmatrix}.
\end{equation}
Thus \(h(t,\theta)\) is evaluated directly from
\eqref{eq:numerical-exact-bloch-flow}, and in the present scalar model the
discrete contrast \(\Phi_{N,n}\) in \eqref{eq:discrete-contrast} becomes
\begin{equation}\label{eq:numerical-explicit-discrete-contrast}
    \Phi_{N,n}(\theta)
    =
    2\beta\sum_{j=1}^{n}z_{t_{j-1}}^\theta\,\Delta_j\Ybar
    -2\beta^2\Delta_n\sum_{j=1}^{n}
        \bigl(z_{t_{j-1}}^\theta\bigr)^2.
\end{equation}

\begin{numericalremark}\label{rem:numerical-alpha-symmetry}
For the initial state \eqref{eq:numerical-initial-state}, the \(z\)-component
satisfies
\begin{equation}\label{eq:numerical-z-second-order}
    \ddot z_t^\theta+2\beta^2\dot z_t^\theta+\alpha^2 z_t^\theta=0,
    \qquad
    z_0^\theta=1,
    \qquad
    \dot z_0^\theta=0.
\end{equation}
Hence
\begin{equation}\label{eq:numerical-alpha-sign-symmetry}
    z_t^{(-\alpha,\beta)}=z_t^{(\alpha,\beta)},
    \qquad
    h(t,-\alpha,\beta)=h(t,\alpha,\beta),
\end{equation}
and the contrast is even in \(\alpha\).  This symmetry determines the
identifiable branch used in the numerical optimization below.

Moreover, for any true parameter \(\theta_0=(\alpha_0,\beta_0)\) with
\(\beta_0\neq0\), the model is identifiable on the branch \(\alpha\geq0\),
with \(\beta\) allowed to range over \(\mathbb{R}\).  Indeed,
\begin{equation}
    h(0,\alpha,\beta)=2\beta,
    \qquad
    \partial_t h(0,\alpha,\beta)=0,
    \qquad
    \partial_t^2 h(0,\alpha,\beta)=-2\beta\alpha^2.
\end{equation}
Thus \(h(t,\theta)=h(t,\theta_0)\) for all \(t\) first gives
\(\beta=\beta_0\), and then, because \(\beta_0\neq0\), comparison of the
second derivatives gives \(\alpha^2=\alpha_0^2\).  Hence
\(\alpha=\alpha_0\) on \(\alpha\geq0\).  By contrast, if
\(\beta_0=0\), then \(h(t,\alpha,0)\equiv0\) for every \(\alpha\), so
\(\alpha\) is not identifiable.
\end{numericalremark}

The observation paths used in the Monte Carlo experiment are generated from
the stochastic Bloch equation
\begin{equation}\label{eq:numerical-stochastic-bloch}
\left\{
\begin{aligned}
    dX_t
    &=
    -2\beta_0^2X_t\,dt
    -2\beta_0Z_tX_t\,dW_t,\\
    dY_t
    &=
    \left(-2\beta_0^2Y_t-\alpha_0Z_t\right)dt
    -2\beta_0Z_tY_t\,dW_t,\\
    dZ_t
    &=
    \alpha_0Y_t\,dt
    +2\beta_0(1-Z_t^2)\,dW_t,
\end{aligned}
\right.
\end{equation}
with observation equation
\begin{equation}\label{eq:numerical-observation-equation}
    dY_t^{\mathrm{obs}}
    =
    2\beta_0 Z_t\,dt+dW_t.
\end{equation}
Here \(X_t,Y_t,Z_t\) are the stochastic Bloch coordinates, whereas
\(Y_t^{\mathrm{obs}}\) denotes the scalar observation process.

\begin{numericalremark}\label{rem:numerical-measurement-strength}
We briefly comment on the physical interpretation of \(\beta\).  In the
continuous-measurement convention of Jacobs and
Steck~\cite{jacobs2006straightforward}, a Hermitian measurement operator is
written in the form \(c=\sqrt{2k}\,X\), where \(k\) is the measurement
strength.  Thus, for \(L^\beta=\beta\sigma_z\), the corresponding strength is
\(k=\beta^2/2\).
\end{numericalremark}

\subsection{Simulation design for sample paths}
\label{subsec:numerical-simulation-design}

We use the model above to examine first the consistency results for
discretely observed data and then the asymptotic normality result, including
the role of the stronger condition \(\sqrt{N}\Delta_n\to0\).  We set
\begin{equation}\label{eq:numerical-true-parameter-setting}
    \theta_0=(\alpha_0,\beta_0)=(1.2,0.7),
    \qquad
    T=5,
    \qquad
    \rho_0=\frac{I+\sigma_z}{2},
\end{equation}
and consider the nine combinations
\begin{equation}\label{eq:numerical-mesh-design}
    N\in\{4,100,1000\},
    \qquad
    \Delta_n\in\{0.5,0.1,0.01\}.
\end{equation}
For each combination we use \(1000\) Monte Carlo replications and maximize
over the fixed compact parameter space \(\Theta=[0,10]\times[0,10]\).

\begin{numericalremark}\label{rem:numerical-parameter-space}
We take \(\alpha\geq0\) because
\(h(t,-\alpha,\beta)=h(t,\alpha,\beta)\) for the present initial state, so this
fixes the identifiable branch.  As noted in \zcref[S]{rem:numerical-alpha-symmetry},
identifiability itself does not require \(\beta\geq0\) when \(\beta_0\neq0\).
We nevertheless restrict \(\beta\geq0\) in view of the physical interpretation
of \(\beta\) discussed in \zcref[S]{rem:numerical-measurement-strength}. The upper
bound \(10\) is imposed only to obtain a compact search region for this numerical
experiment.
\end{numericalremark}

The stochastic Bloch equation and the observation process are simulated on a
finer grid with step \(\delta=10^{-3}\).  Let
\(r_m=(x_m,y_m,z_m)^\top\) denote the simulated Bloch vector and let
\(\widetilde r_{m+1}\) be the value obtained from one Euler--Maruyama step of
the Bloch SDE before enforcing the state-space constraint.  To prevent a
finite-step Euler--Maruyama update from leaving the Bloch ball, we use the
radial projection
\begin{equation}\label{eq:numerical-bloch-ball-projection}
    r_{m+1}
    =
    \Pi_{\mathbb B^3}(\widetilde r_{m+1})
    :=
    \begin{cases}
        \widetilde r_{m+1},
        & \|\widetilde r_{m+1}\|_2\leq 1,\\[2mm]
        \displaystyle
        \frac{\widetilde r_{m+1}}
             {\|\widetilde r_{m+1}\|_2},
        & \|\widetilde r_{m+1}\|_2>1.
    \end{cases}
\end{equation}
The observation increment is evaluated from the pre-update value \(z_m\), as
\begin{equation}\label{eq:numerical-fine-observation-increment}
    \Delta_m Y
    =2\beta_0 z_m\,\delta+\Delta_m W,
    \qquad
    \Delta_m W\sim\mathcal N(0,\delta),
\end{equation}
and the fine-grid increments are then aggregated to the three observation grids
in \eqref{eq:numerical-mesh-design}.

\begin{numericalremark}
The use of a finer grid for data generation avoids using the same coarse
discretization both to generate the observations and to evaluate the contrast.
Otherwise, matching discretization errors could make the numerical agreement
artificially favorable.  The projection in
\eqref{eq:numerical-bloch-ball-projection} is used only in the stochastic
Euler--Maruyama data-generation step.  The deterministic function
\(h(t,\theta)\) entering the contrast is evaluated from the exact averaged
Bloch dynamics described above.
\end{numericalremark}

Within each Monte Carlo replication, the same fine-grid Brownian paths are used
for all three observation grids.  The samples for \(N=4,100,1000\) are nested.

The numerical maximization of the contrast is carried out as follows.
\begin{enumerate}
    \item The contrast is evaluated on a uniform grid over the whole parameter
    space \(\Theta=[0,10]\times[0,10]\), with grid spacing \(0.05\) in both
    coordinates.

    \item A grid point is regarded as a local-maximum candidate when its
    contrast value is no smaller than those at all neighboring grid points
    (up to eight neighbors).  The candidates are ordered by their contrast
    values, and the eight highest well-separated candidates are selected as
    starting points, with a minimum Euclidean separation of \(0.15\) between
    selected points.

    \item Starting from each selected grid point, the contrast is refined by a
    bounded Gauss--Newton-type continuous optimization with backtracking, with
    the iterates constrained to remain in \(\Theta\).

    \item The refined candidates are compared, and the one with the largest
    contrast value is taken as \(\hat\theta_{N,n}\).
\end{enumerate}
Thus the grid is used to locate several possible maxima over the full parameter
space, while the final estimate is obtained after continuous refinement.

\subsection{Monte Carlo study}
\label{subsec:numerical-monte-carlo-study}

\subsubsection{Monte Carlo means}
\label{subsubsec:numerical-monte-carlo-means}

For each \((N,\Delta_n)\) of the nine designs, \zcref[S]{tab:numerical-discrete-mesh-consistency} reports
the Monte Carlo means of the unscaled estimators \(\hat\alpha_{N,n}\) and
\(\hat\beta_{N,n}\).  The empirical standard deviation over the \(1000\)
replications is shown in parentheses below each mean.  This first comparison
concerns the consistency result in \zcref[S]{thm:discrete-consistency}, before
introducing the \(\sqrt N\)-scaled statistics used for asymptotic normality.

\begin{table}
\caption{Monte Carlo means of the parameter estimators based on \(1000\)
replications: (a) \(\hat\alpha_{N,n}\) and
(b) \(\hat\beta_{N,n}\).  The empirical standard deviation is shown in
parentheses below each mean.  The true parameter is
\(\theta_0=(\alpha_0,\beta_0)=(1.2,0.7)\).}
\label{tab:numerical-discrete-mesh-consistency}
\begin{minipage}[t]{0.49\textwidth}
    \centering
    \textbf{(a) \(\alpha\)}\\[3pt]
    \setlength{\tabcolsep}{3.5pt}
    \renewcommand{\arraystretch}{1.08}
    \begin{tabular}{@{}lccc@{}}
\toprule
$N$ & $\Delta_n=0.5$ & $\Delta_n=0.1$ & $\Delta_n=0.01$ \\
\midrule
4 & $1.790$ & $1.819$ & $1.728$ \\
 & $(1.801)$ & $(1.956)$ & $(1.840)$ \\[.2em]
100 & $1.278$ & $1.223$ & $1.210$ \\
 & $(0.146)$ & $(0.134)$ & $(0.131)$ \\[.2em]
1000 & $1.272$ & $1.220$ & $1.207$ \\
 & $(0.045)$ & $(0.042)$ & $(0.042)$ \\
\bottomrule
\end{tabular}

\end{minipage}\hfill
\begin{minipage}[t]{0.49\textwidth}
    \centering
    \textbf{(b) \(\beta\)}\\[3pt]
    \setlength{\tabcolsep}{3.5pt}
    \renewcommand{\arraystretch}{1.08}
    \begin{tabular}{@{}lccc@{}}
\toprule
$N$ & $\Delta_n=0.5$ & $\Delta_n=0.1$ & $\Delta_n=0.01$ \\
\midrule
4 & $0.608$ & $0.657$ & $0.668$ \\
 & $(0.251)$ & $(0.313)$ & $(0.336)$ \\[.2em]
100 & $0.641$ & $0.686$ & $0.695$ \\
 & $(0.076)$ & $(0.079)$ & $(0.080)$ \\[.2em]
1000 & $0.649$ & $0.694$ & $0.703$ \\
 & $(0.026)$ & $(0.026)$ & $(0.027)$ \\
\bottomrule
\end{tabular}

\end{minipage}
\end{table}

Along the diagonal refinement from \((N,\Delta_n)=(4,0.5)\) to
\((100,0.1)\) and then \((1000,0.01)\), the Monte Carlo means move toward
\((\alpha_0,\beta_0)=(1.2,0.7)\), while the empirical standard deviations
contract substantially.  The table also separates the effects of increasing
\(N\) and refining the observation mesh.  For example, at the fixed coarse
mesh \(\Delta_n=0.5\), increasing \(N\) sharply reduces the dispersion but
does not remove the visible discretization bias in the \(\alpha\)-coordinate.
This is consistent with the joint limit \(N\to\infty\) and
\(n\to\infty\) in \zcref[S]{thm:discrete-consistency}.

The covariance-consistency result in
\zcref[S]{thm:discrete-covariance-estimator-consistency} has the limiting target
\begin{equation}\label{eq:numerical-limiting-covariance}
    V
    :=
    I(\theta_0)^{-1}\Sigma I(\theta_0)^{-1}.
\end{equation}
For the present model, the expectation defining \(\Sigma\), and hence the
numerical value of \(V\), is not available in closed form.  We therefore
replace \(V\) in the numerical diagnostics by a fine-grid Monte Carlo
approximation.  On a grid with spacing \(\Delta t\), we first approximate
\(I(\theta_0)\) by
\begin{equation}\label{eq:numerical-fine-grid-information}
    I^{(\Delta t)}
    =
    \Delta t\sum_{k=1}^{T/\Delta t}
    \nabla_\theta h(t_{k-1},\theta_0)
    \nabla_\theta h(t_{k-1},\theta_0)^\top.
\end{equation}
For each independently generated single trajectory, we form the fine-grid
score approximation
\begin{equation}\label{eq:numerical-fine-grid-score}
    \eta_i^{(\Delta t)}
    =
    \sum_{k=1}^{T/\Delta t}
    \nabla_\theta h(t_{k-1},\theta_0)
    \left\{
        \Delta_k\Yvec[i]
        -h(t_{k-1},\theta_0)\Delta t
    \right\}.
\end{equation}
The expectation in \(\Sigma\) is then approximated by the empirical second
moment of these scores, and the resulting approximation of \(V\) is obtained
from the same sandwich formula as in
\eqref{eq:numerical-limiting-covariance}.  Using \(25000\) independent
single-trajectory Monte Carlo paths with \(\Delta t=10^{-3}\) gives
\begin{equation}\label{eq:numerical-limiting-covariance-value}
    V
    \approx
    \begin{pmatrix}
        1.6805 & 0.6367\\
        0.6367 & 0.6476
    \end{pmatrix}.
\end{equation}

We also checked the sensitivity of this approximation to the fine-grid
spacing.  In a separate paired-grid calculation with \(25000\) trajectories,
we used a grid with \(\Delta t=10^{-4}\) and drove the corresponding
\(\Delta t=10^{-3}\) Euler--Maruyama scheme by aggregating ten consecutive
Brownian increments from the same paths.  The maximum relative change among
the \((1,1)\), \((1,2)\), and \((2,2)\) entries was approximately
\(0.17\%\).  Thus the fine-grid discretization effect is already well below
the \(10^{-2}\) scale for all three displayed covariance entries at
\(\Delta t=10^{-3}\).

To examine directly the covariance estimator in
\zcref[S]{thm:discrete-covariance-estimator-consistency},
\zcref[S]{tab:numerical-covariance-mesh-comparison} reports the Monte Carlo means
of the entries of \(\hat V_{N,n}\) over the \(1000\) replications.  For a
replication in which \(\hat I_{N,n}\) is singular, \(\hat V_{N,n}\) is
undefined and that replication is omitted from the corresponding mean.

\begin{table}
\caption{Monte Carlo means of the defined entries of the empirical sandwich
covariance estimator \(\hat V_{N,n}\).  The last column gives the corresponding
entry of the fine-grid approximation of \(V\), shown once per entry block.  \(V\)
does not depend on \(N\) or \(\Delta_n\).}
\label{tab:numerical-covariance-mesh-comparison}
\centering
\setlength{\tabcolsep}{7pt}
\renewcommand{\arraystretch}{1.08}
\begin{tabular}{llcccr}
\toprule
Entry & \(N\) & \(\Delta_n=0.5\) & \(\Delta_n=0.1\) & \(\Delta_n=0.01\) & \(V\)\\
\midrule
\((1,1)\) & 4    & 13.2047 & 1.2430 & 1.4616 & \\
           & 100  & 1.2018  & 1.5470 & 1.6482 & \\
           & 1000 & 1.2219  & 1.5824 & 1.6852 & 1.6805\\
\midrule
\((1,2)\) & 4    & 0.1966 & 0.3793 & 0.4571 & \\
           & 100  & 0.3004 & 0.5379 & 0.6120 & \\
           & 1000 & 0.3240 & 0.5659 & 0.6408 & 0.6367\\
\midrule
\((2,2)\) & 4    & 0.3366 & 0.4435 & 0.4799 & \\
           & 100  & 0.4762 & 0.5918 & 0.6323 & \\
           & 1000 & 0.4877 & 0.6064 & 0.6475 & 0.6476\\
\bottomrule
\end{tabular}
\end{table}

For \((N,\Delta_n)=(1000,0.01)\), the Monte Carlo mean of the empirical
covariance estimator is approximately
\[
    \begin{pmatrix}
        1.6852 & 0.6408\\
        0.6408 & 0.6475
    \end{pmatrix}.
\]
The three displayed entries are close to the fine-grid reference values in
\eqref{eq:numerical-limiting-covariance-value}.  The table also shows the
finite-sample instability of covariance estimation for \(N=4\).  In
particular, the relatively large Monte Carlo mean of the \((1,1)\) entry at
\(\Delta_n=0.5\) is caused by nearly singular plug-in information matrices in
some replications.  This is consistent with the instability of the \(N=4\)
Studentized distributions discussed below.

The convergence in \zcref[S]{tab:numerical-covariance-mesh-comparison} is the
numerical counterpart of the covariance-consistency result: as the number of
paths increases and the observation mesh is refined, the empirical sandwich
covariance approaches the same limiting matrix \(V\) that enters the
asymptotic normal law.  The Studentized comparisons below provide a separate
check of the practical effect of replacing \(V\) by \(\hat V_{N,n}\).

Having first examined consistency, we next turn to the distributional
approximation in \zcref[S]{thm:discrete-asymptotic-normality}.  In addition to
\(N\to\infty\) and \(n\to\infty\), this result requires the stronger mesh
balance condition \(\sqrt{N}\Delta_n\to0\).  We examine this through the
marginal and joint distributional comparisons below.

\subsubsection{Histograms and QQ plots for marginal distributions}
\label{subsubsec:numerical-marginal-distributions}

Using the limiting covariance \(V\) as the theoretical reference, define
\begin{equation}\label{eq:numerical-standardized-mesh-error}
    Z_{N,n}^{\alpha}
    :=
    \frac{\sqrt{N}(\hat\alpha_{N,n}-\alpha_0)}
    {\sqrt{V_{11}}},
    \qquad
    Z_{N,n}^{\beta}
    :=
    \frac{\sqrt{N}(\hat\beta_{N,n}-\beta_0)}
    {\sqrt{V_{22}}}.
\end{equation}
For comparison, whenever the empirical sandwich covariance
\(\hat V_{N,n}\) in \eqref{eq:discrete-V-hat} is defined, define the
corresponding Studentized errors by
\begin{equation}\label{eq:numerical-studentized-mesh-error}
    T_{N,n}^{\alpha}
    :=
    \frac{\sqrt{N}(\hat\alpha_{N,n}-\alpha_0)}
    {\sqrt{(\hat V_{N,n})_{11}}},
    \qquad
    T_{N,n}^{\beta}
    :=
    \frac{\sqrt{N}(\hat\beta_{N,n}-\beta_0)}
    {\sqrt{(\hat V_{N,n})_{22}}}.
\end{equation}
The empirical covariance \(\hat V_{N,n}\) is computed from the same observed
trajectories as the corresponding parameter estimate, using
\eqref{eq:discrete-Sigma-hat}--\eqref{eq:discrete-V-hat}.

For the marginal histograms, the common display range \([-3,3]\) is divided
into \(18\) equal-width bins.  Thus, with
\begin{equation}\label{eq:numerical-histogram-bins}
    b_k=-3+\frac{k}{3},
    \qquad k=0,\ldots,18,
\end{equation}
the \(k\)-th bin is \([b_{k-1},b_k)\), except that the right endpoint is
included in the last bin.  For a sample \(X_1,\ldots,X_m\), the plotted bar
height is
\begin{equation}\label{eq:numerical-histogram-height}
    H_k(X)
    =
    \frac{1}{m(b_k-b_{k-1})}
    \sum_{r=1}^{m}
    \mathbf 1_{[b_{k-1},b_k)}(X_r),
    \qquad k=1,\ldots,18,
\end{equation}
with the evident right-endpoint convention for the last bin.  Here \(m=1000\)
for the \(Z\)-statistics, while for a Studentized statistic \(m\) is the number
of replications for which the corresponding \(\hat V_{N,n}\) is defined.
Consequently, observations outside \([-3,3]\) are not discarded before
normalization: the total area of the displayed bars equals the empirical
fraction of the relevant sample lying in \([-3,3]\).  In particular, the
histograms are not renormalized over the displayed range, so their heights can
be compared directly with the standard normal density shown in the same panel.

\begin{figure}
\centering
\begin{minipage}[t]{0.49\textwidth}
    \centering
    \textbf{(a) Histogram: \(\alpha\)}\\[2pt]
    \includegraphics[width=\textwidth]{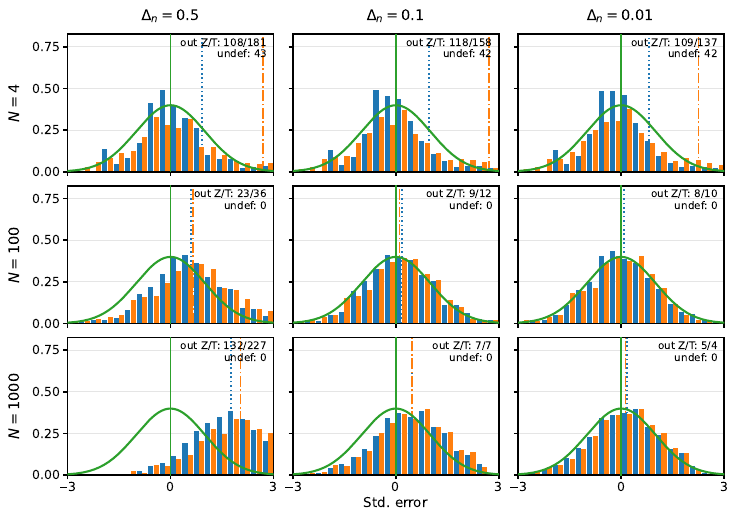}
\end{minipage}\hfill
\begin{minipage}[t]{0.49\textwidth}
    \centering
    \textbf{(b) Histogram: \(\beta\)}\\[2pt]
    \includegraphics[width=\textwidth]{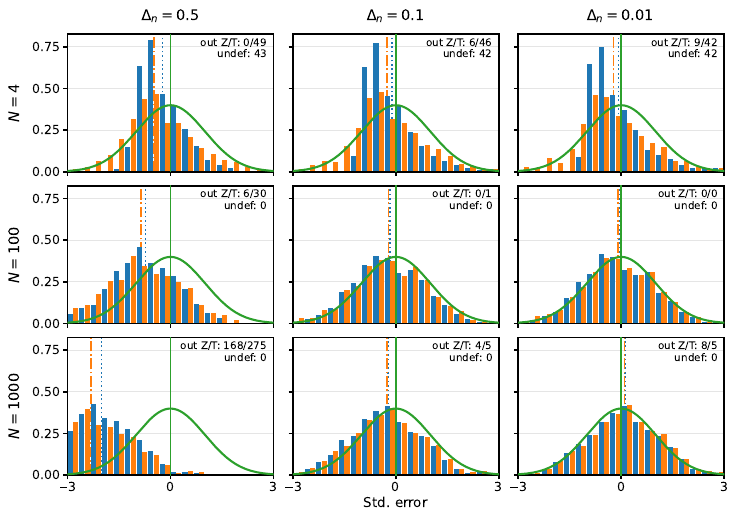}
\end{minipage}

\vspace{0.6em}

\begin{minipage}[t]{0.49\textwidth}
    \centering
    \textbf{(c) Normal QQ plot: \(\alpha\)}\\[2pt]
    \includegraphics[width=\textwidth]{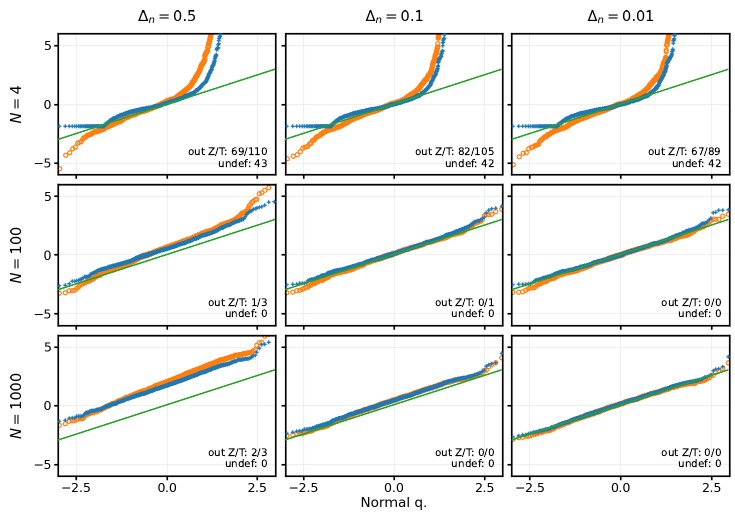}
\end{minipage}\hfill
\begin{minipage}[t]{0.49\textwidth}
    \centering
    \textbf{(d) Normal QQ plot: \(\beta\)}\\[2pt]
    \includegraphics[width=\textwidth]{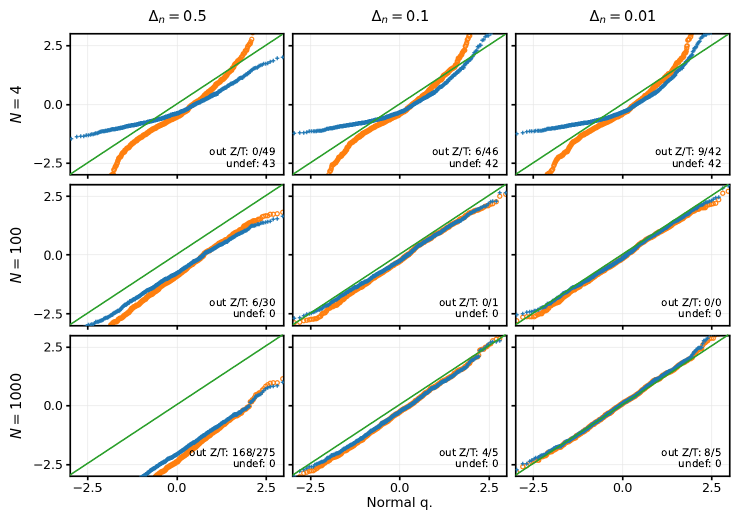}
\end{minipage}
\caption{Histograms and normal QQ plots for marginal standardized and
Studentized errors.  Panels~(a) and~(b) show paired histograms for
\(\alpha\) and \(\beta\), respectively.  The blue filled bars
show the limiting-covariance standardized errors
\(Z_{N,n}^{\alpha}\) and \(Z_{N,n}^{\beta}\), while the orange outlines
show the corresponding Studentized errors \(T_{N,n}^{\alpha}\) and
\(T_{N,n}^{\beta}\).  The green solid curve is
the standard normal density. The green solid vertical line marks zero, while
the blue dotted and orange dash-dotted lines mark the corresponding Monte Carlo
means. Panels~(c) and~(d) show paired normal QQ plots for the same two
standardizations: blue plus markers represent \(Z\), orange open circles
represent \(T\), and the green solid line is the identity line.  Within each
panel, rows correspond to \(N=4,100,1000\) and columns to
\(\Delta_n=0.5,0.1,0.01\).  The notation ``out Z/T'' records observations
outside the common displayed range, and ``undef'' records replications for
which \(\hat V_{N,n}\) is not defined.}
\label{fig:numerical-discrete-mesh-marginal}
\end{figure}

The marginal histograms and QQ plots in
\zcref[S]{fig:numerical-discrete-mesh-marginal} give a consistent picture: along
the diagonal combinations of \((N,\Delta_n)\), agreement with the Gaussian
reference improves, with the closest agreement at
\((N,\Delta_n)=(1000,0.01)\).  Coarser observation intervals retain a
systematic shift.

This shift is also visible directly at the \(\sqrt N\)-scale.  For
\(N=1000\), the Monte Carlo mean of
\(\sqrt N(\hat\theta_{N,n}-\theta_0)\) is approximately
\((2.28,-1.62)\) when \(\Delta_n=0.5\), whereas it is approximately
\((0.21,0.09)\) when \(\Delta_n=0.01\).  Thus the discretization bias that
remains visible at a coarse observation interval becomes much smaller on the
finer mesh, complementing the histogram and QQ-plot comparison above.

The paired histograms and QQ plots also show the effect of replacing the
limiting reference \(V\) by the empirical estimator \(\hat V_{N,n}\).  For
\(N=100\) and \(N=1000\), the Studentized distributions are close to the
limiting-covariance standardized ones, especially for the finer observation
intervals.  For \(N=4\), covariance estimation is much less stable.  In the
replications with \(\hat\alpha=0\), the plug-in information matrix
\(\hat I_{N,n}\) is singular, so \(\hat V_{N,n}\) is undefined.  These are
the \(43\), \(42\), and \(42\) ``undef'' cases for
\(\Delta_n=0.5,0.1,0.01\), respectively.  The boundary estimates are retained
in the Monte Carlo summaries.  No undefined covariance estimates occur for
\(N=100\) or \(N=1000\).

\subsubsection{Scatter and QQ plots for joint distribution}
\label{subsubsec:numerical-joint-distributions}

For the joint distribution, using the limiting covariance \(V\) as a reference,
and using the empirical sandwich covariance whenever it is defined, set
\begin{equation}\label{eq:numerical-whitened-error-vector}
\begin{aligned}
    Z_{N,n}
    &=
    V^{-1/2}\sqrt{N}
    \begin{pmatrix}
        \hat\alpha_{N,n}-\alpha_0\\
        \hat\beta_{N,n}-\beta_0
    \end{pmatrix},\\
    T_{N,n}
    &=
    \hat V_{N,n}^{-1/2}\sqrt{N}
    \begin{pmatrix}
        \hat\alpha_{N,n}-\alpha_0\\
        \hat\beta_{N,n}-\beta_0
    \end{pmatrix}.
\end{aligned}
\end{equation}
The first vector uses the limiting covariance, whereas the second is the joint
Studentized version.  Under the joint asymptotic normality and covariance
consistency results, both are expected to approach the bivariate standard
normal distribution \(\mathcal N(0,I_2)\).

For a visual examination of the joint distribution, we use paired scatter
plots of \(Z_{N,n}\) and \(T_{N,n}\).  The 95\% contour
of the reference distribution is
\[
    \bigl\{z\in\mathbb R^2: \|z\|^2=\chi^2_{2,0.95}\bigr\},
\]
and is shown by the green solid circle.

We also superimpose empirical 95\% Gaussian ellipses for the two point
clouds.  If \(\bar z\) and \(S_Z\) denote the empirical mean and covariance
matrix of the \(Z_{N,n}\) sample, and \(\bar t\) and \(S_T\) denote the
corresponding quantities for the defined \(T_{N,n}\) sample, the plotted
ellipses are
\[
    (x-\bar z)^\top S_Z^{-1}(x-\bar z)=\chi^2_{2,0.95},
    \qquad
    (x-\bar t)^\top S_T^{-1}(x-\bar t)=\chi^2_{2,0.95}.
\]
These are descriptive plug-in Gaussian probability ellipses for the two
empirical clouds, rather than confidence regions for their mean vectors.

To examine the radial distribution, define the squared Mahalanobis distances
\begin{equation}\label{eq:numerical-joint-chi-square}
    D^2_{N,n}(Z)=\|Z_{N,n}\|^2,
    \qquad
    D^2_{N,n}(T)=\|T_{N,n}\|^2.
\end{equation}
Under the corresponding bivariate standard normal reference, both squared
Mahalanobis distances have the \(\chi^2_2\) reference distribution.  The two
joint comparisons are shown in \zcref[S]{fig:numerical-discrete-mesh-joint}:
panel~(a) displays scatter plots of the standardized error vectors, and
panel~(b) compares
the empirical quantiles of \(D^2_{N,n}(Z)\) and \(D^2_{N,n}(T)\) with the
corresponding \(\chi^2_2\) quantiles.

\begin{figure}
\centering
\begin{minipage}[t]{0.49\textwidth}
    \centering
    \textbf{(a) Scatter plot}\\[2pt]
    \includegraphics[width=\textwidth]{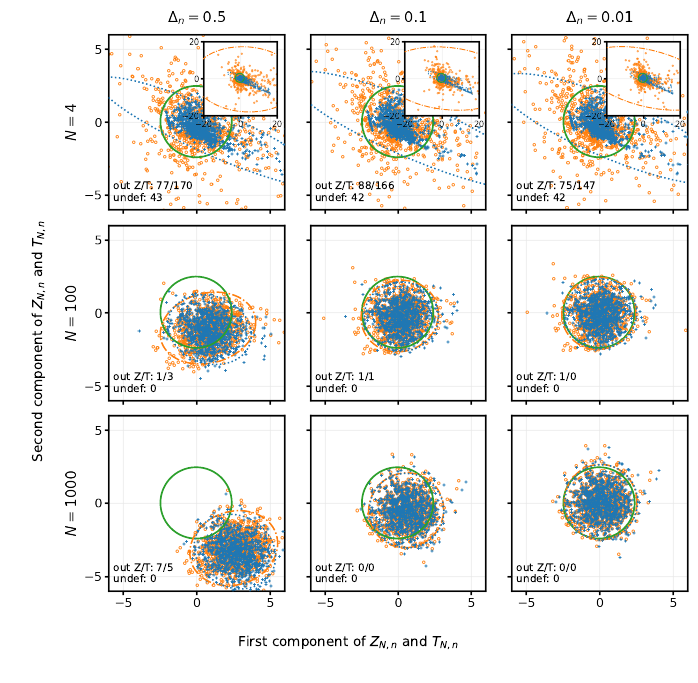}
\end{minipage}\hfill
\begin{minipage}[t]{0.49\textwidth}
    \centering
    \textbf{(b) QQ plot}\\[2pt]
    \includegraphics[width=\textwidth]{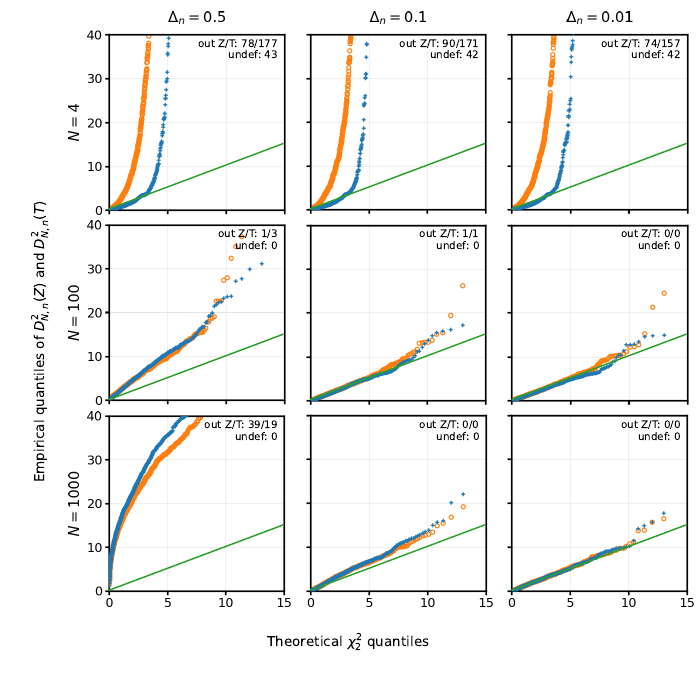}
\end{minipage}
\caption{Scatter and QQ plots for joint standardized and Studentized errors.
Rows correspond to \(N=4,100,1000\) and columns to
\(\Delta_n=0.5,0.1,0.01\).  Panel~(a) shows \(Z_{N,n}\) and \(T_{N,n}\)
defined in \eqref{eq:numerical-whitened-error-vector}: blue plus markers
represent \(Z_{N,n}\), orange open circles represent \(T_{N,n}\), the blue
dotted and orange dash-dotted ellipses are the corresponding empirical 95\%
Gaussian probability ellipses, and the green solid circle is the 95\% contour
of \(\mathcal N(0,I_2)\).  For \(N=4\), the insets show the same quantities
over \([-20,20]^2\). The main panels use \([-6,6]^2\).  Panel~(b) shows QQ
plots for \(D^2_{N,n}(Z)\) and \(D^2_{N,n}(T)\) in
\eqref{eq:numerical-joint-chi-square} against the \(\chi^2_2\) distribution.
Blue plus markers represent \(D^2_{N,n}(Z)\), orange open circles represent
\(D^2_{N,n}(T)\), and the green solid line is the identity line.  The
notation ``out Z/T'' records observations outside the common displayed range,
and ``undef'' records replications for which \(\hat V_{N,n}\) is not defined.}
\label{fig:numerical-discrete-mesh-joint}
\end{figure}

The joint scatter and QQ plots show the same dependence on the observation
interval as the marginal plots.  The closest agreement with the bivariate
Gaussian reference is seen for \((N,\Delta_n)=(1000,0.01)\), whereas coarser
observation intervals show clear departures in the location and radial
distribution of both the limiting-covariance standardized and Studentized
errors.  Taken together with the marginal plots and the scaled-bias
comparison above, these results are consistent with the role of the condition
\(\sqrt{N}\Delta_n\to0\) in the discrete asymptotic normality result.

\appendix
\section{Auxiliary lemmas}
\label{app:auxiliary-lemmas}

\label{app:parameter-differentiability}

We first collect the differentiability properties used in the main proof. Let \(C^{i,j}([0,T]\times\Theta)\) denote the space of functions that are \(i\)-times continuously differentiable in \(t\) and \(j\)-times continuously differentiable in \(\theta\).

\begin{lemma}
\label{lem:averaged-trajectory-time-regularity}
Let \(\Theta\) be compact and let \(\ell\ge0\). Suppose that \(H^\alpha\) and \(L^\beta\) are \(\ell\)-times continuously
differentiable in \(\theta\). Then \(\rhoave{t,\theta}\) is \(\ell\)-times continuously differentiable in \(\theta\). In addition, all its \(\theta\)-derivatives of order at most
\(\ell\) are uniformly bounded on \([0,T]\times\Theta\) and uniformly
Lipschitz continuous in \(t\), uniformly over \(\theta\in\Theta\).
\end{lemma}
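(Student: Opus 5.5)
The plan is to exploit the fact that the averaged state solves a deterministic linear ODE. Taking expectations in \eqref{eq:sample_path} kills the stochastic integral, because \(\mathcal H^\beta(\rhosol{t,\theta,i})\) is bounded (the state is a density operator), so the Itô integral is a true martingale. This gives
\[
\frac{d}{dt}\rhoave{t,\theta}=\mathcal L^\theta\bigl(\rhoave{t,\theta}\bigr),\qquad \rhoave{0,\theta}=\rho_0 .
\]
Note that the nonlinear term \(\bform{L_k^\beta,\rho}\rho\) appears only in the martingale part, so it drops out. Hence \(\rhoave{t,\theta}=e^{t\mathcal L^\theta}\rho_0\), where \(\mathcal L^\theta\) is a linear operator on the finite-dimensional space \(M_d(\mathbb C)\). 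Its matrix entries are polynomial in the entries of \(H^\alpha\), \(L_k^\beta\) and their adjoints, so \(\theta\mapsto\mathcal L^\theta\) is \(C^\ell\) by Assumption~\ref{differentiability}. Moreover, continuity and compactness of \(\Theta\) give \(\sup_{\theta\in\Theta}\norm{\mathcal L^\theta}\le K\) for some constant \(K\).

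Next I would obtain \(\theta\)-differentiability from the smooth dependence of the matrix exponential on its generator. I would argue by induction on the order \(m\le\ell\). Formally differentiating the ODE gives
\[
\frac{d}{dt}\partial_\theta^m\rhoave{t,\theta}=\mathcal L^\theta\bigl(\partial_\theta^m\rhoave{t,\theta}\bigr)+\sum_{j=1}^{m}\binom{m}{j}(\partial_\theta^j\mathcal L^\theta)\bigl(\partial_\theta^{m-j}\rhoave{t,\theta}\bigr),\qquad \partial_\theta^m\rhoave{0,\theta}=0 .
\]
The standard theorem on differentiable dependence on parameters for linear ODEs with \(C^\ell\) coefficients justifies this and gives joint continuity in \((t,\theta)\). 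Alternatively, one can use the Duhamel formula
\[
\partial_\theta e^{t\mathcal L^\theta}=\int_0^t e^{(t-s)\mathcal L^\theta}(\partial_\theta\mathcal L^\theta)e^{s\mathcal L^\theta}\,ds
\]
iteratively. In either form, the derivatives are given by variation of constants, with forcing terms built from lower-order derivatives.

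For the uniform bounds I would again argue inductively. First, \(\norm{\rhoave{t,\theta}}\le1\) in trace norm, since it is a density matrix; alternatively, \(\norm{e^{t\mathcal L^\theta}}\le e^{KT}\). Grönwall, or the variation-of-constants formula, then gives
\[
\sup_{t,\theta}\norm{\partial_\theta^m\rhoave{t,\theta}}\le e^{KT}\,T\,\sum_{j\ge1}\binom{m}{j}\sup_{\theta\in\Theta}\norm{\partial_\theta^j\mathcal L^\theta}\;\sup_{t,\theta}\norm{\partial_\theta^{m-j}\rhoave{t,\theta}}<\infty,
\]
using continuity of \(\partial_\theta^j\mathcal L^\theta\) on the compact set \(\Theta\). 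Time Lipschitz continuity follows immediately from the differentiated ODE: its right-hand side is bounded uniformly in \((t,\theta)\) by the bounds just obtained. Hence
\[
\norm{\partial_\theta^m\rhoave{t,\theta}-\partial_\theta^m\rhoave{s,\theta}}\le C\abs{t-s}
\]
uniformly in \(\theta\).

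The only genuinely delicate step is the first one: that the expectation commutes with the dynamics, i.e.\ that the stochastic integral has zero mean. Boundedness of the integrand, from Theorem~\ref{thm:existence-sme}, settles it. The corresponding statements for \(h\) then follow from \eqref{eq:h-beta} and the Leibniz rule, since \(\Lvec^\beta\) is \(C^\ell\) and bounded on \(\Theta\).
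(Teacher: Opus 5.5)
Your proof is correct. Its first step coincides with the paper's: taking expectations in the stochastic master equation reduces everything to the linear equation $\bar\rho_t^\theta=\rho_0+\int_0^t\mathcal L^\theta(\bar\rho_s^\theta)\,ds$. You also justify that the martingale term has zero mean, which the paper leaves implicit.

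The two arguments diverge in the regularity step. The paper never uses a closed form. It runs the Picard iteration for the integral equation and differentiates the iterates in $\theta$. A factorial estimate then shows the iterates are Cauchy in $C^{0,\ell}([0,T]\times\Theta)$. The paper identifies the limit with $\bar\rho^\theta$ by uniqueness and reads off $C^{1,\ell}$ regularity, which gives bounded, uniformly time-Lipschitz $\theta$-derivatives.

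You instead write $\bar\rho_t^\theta=e^{t\mathcal L^\theta}\rho_0$. You obtain $\theta$-regularity from the analytic dependence of $(t,A)\mapsto e^{tA}$, composed with the $C^\ell$ map $\theta\mapsto\mathcal L^\theta$, or from the standard parameter-dependence theorem. This is shorter and gives a little more for free:
\begin{itemize}
\item joint smoothness in $(t,\theta)$ justifies commuting $\partial_t$ with $\partial_\theta^m$ in your differentiated ODE;
\item uniform boundedness already follows from continuity on the compact set $[0,T]\times\Theta$, so your inductive Duhamel/Gr\"onwall bound is correct but not strictly needed.
\end{itemize}

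The paper's Picard argument is self-contained, essentially re-proving the parameter-dependence theorem, and does not rely on the explicit exponential formula. For the autonomous linear generator at hand, both arguments are equally rigorous.
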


\begin{proof}
By taking expectations in \eqref{eq:sample_path}, we have
\[
    \rhoave{t,\theta}
    =
    \rho_0+
    \int_0^t
        \mathcal{L}^{\theta}\bigl(\rhoave{s,\theta}\bigr)
    \,ds .
\]

Consider the Picard sequence
\[
    \rho_0(t,\theta)=\rho_0,
    \qquad
    \rho_{m+1}(t,\theta)
    =
    \rho_0+
    \int_0^t
        \mathcal{L}^{\theta}\bigl(\rho_m(s,\theta)\bigr)
    \,ds .
\]
Since the derivatives of \(\mathcal{L}^{\theta}\) up to order \(\ell\)
are uniformly bounded on \(\Theta\), differentiating
\begin{equation}
    \rho_{m+1}(t,\theta)-\rho_m(t,\theta)
    =
    \int_0^t
        \mathcal{L}^{\theta}
        \bigl(\rho_m(s,\theta)-\rho_{m-1}(s,\theta)\bigr)
    \,ds
\end{equation}
with respect to \(\theta\) gives
\begin{equation}
\begin{aligned}
\max_{0\leq j\leq\ell}
\sup_{\theta\in\Theta}
\norm{
    \partial_\theta^j
    \bigl(\rho_{m+1}(t,\theta)-\rho_m(t,\theta)\bigr)
}_{\mathrm{HS}}
&\leq
C\int_0^t
\max_{0\leq j\leq\ell}
\sup_{\theta\in\Theta}
\norm{
    \partial_\theta^j
    \bigl(\rho_m(s,\theta)-\rho_{m-1}(s,\theta)\bigr)
}_{\mathrm{HS}}
\,ds
\\
&\leq
\frac{(Ct)^m}{m!}
\max_{0\leq j\leq\ell}
\sup_{(s,\theta)\in[0,T]\times\Theta}
\norm{
    \partial_\theta^j
    \bigl(\rho_1(s,\theta)-\rho_0\bigr)
}_{\mathrm{HS}} .
\end{aligned}
\end{equation}
for some constant $C$. Hence \(\{\rho_m\}\) is a Cauchy sequence in
\(C^{0,\ell}([0,T]\times\Theta)\). 
Let \(\rho(t,\theta)\) denote this limit. Since
\(\mathcal{L}^{\theta}(\rho_m(t,\theta))\) also converges in
\(C^{0,\ell}([0,T]\times\Theta)\), passing to the limit in the integral equation
gives
\[
    \rho(t,\theta)
    =
    \rho_0+
    \int_0^t
        \mathcal{L}^{\theta}\bigl(\rho(s,\theta)\bigr)
    \,ds .
\]
By the uniqueness of the solution to the above integral equation, \(\rho(t,\theta)=\rhoave{t,\theta}\), and since
\(\mathcal{L}^{\theta}(\rho(t,\theta))\in C^{0,\ell}([0,T]\times\Theta)\),
 \(\rhoave{t,\theta}\) belongs to \(C^{1,\ell}([0,T]\times\Theta)\).
Hence its \(\theta\)-derivatives up to order \(\ell\) and their derivatives with respect to \(t\) are uniformly bounded. Consequently, the \(\theta\)-derivatives are uniformly Lipschitz continuous in \(t\), uniformly over \(\theta\in\Theta\).
\end{proof}

\begingroup
\begin{lemma}\label{lem:differentiation-stochastic-integral}
Let \(\ell\in\{1,2,3\}\), and suppose that \(\Theta^\circ\) is a bounded
Lipschitz domain with closure \(\Theta\). Let
\(Z=(Z^{(1)},\dots,Z^{(r)})\) be a continuous semimartingale on
\([0,T]\) of the form
\[
    dZ_t=a_t\,dt+dM_t,
\]
where \(a\) is progressively measurable and satisfies
\[
    \mathbb{E}\left[
        \left(\int_0^T \norm{a_t}_{\mathbb{R}^r}\,dt\right)^q
    \right]<\infty
\]
for every \(q\geq1\), and \(M\) is an \(r\)-dimensional continuous
martingale whose quadratic variation matrix satisfies
\[
    \operatorname{tr}\langle M\rangle_t \leq C_M t,
    \qquad 0\leq t\leq T,
\]
for some constant \(C_M\). Let
\(f:[0,T]\times\Theta\to\mathbb{R}^r\) be \(\ell\)-times continuously
differentiable in \(\theta\) on \(\Theta^\circ\), and suppose that all
\(\theta\)-derivatives of total order at most \(\ell\) extend continuously and
boundedly to \([0,T]\times\Theta\). Then, for every \(q\geq1\), the random
field
\[
    I(\theta):=\int_0^T f(t,\theta)\cdot dZ_t
\]
belongs to \(W^{\ell,q}(\Theta^\circ)\) almost surely, with weak derivatives
\[
    \partial_\theta^\nu I(\theta)
    =
    \int_0^T \partial_\theta^\nu f(t,\theta)\cdot dZ_t,
    \qquad |\nu|\leq \ell,
\]
where \(\nu\) is a multi-index. Moreover, if \(q>\dim\Theta\), then \(I\)
admits a modification in \(C^{\ell-1}(\Theta)\), and for \(|\nu|\leq \ell-1\)
the corresponding classical derivatives are given by the same stochastic
integrals.
\end{lemma}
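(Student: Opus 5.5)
We prove the statement by fixing a jointly measurable version of the stochastic integral field and identifying its weak derivatives through a Fubini-type interchange. Split \(I(\theta)=\int_0^T f(t,\theta)\cdot a_t\,dt+\int_0^T f(t,\theta)\cdot dM_t=:I^a(\theta)+I^M(\theta)\).

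The finite-variation part \(I^a\) is handled pathwise. By dominated convergence, using boundedness of \(\partial_\theta^\nu f\) and \(\int_0^T\norm{a_t}\,dt<\infty\) a.s., it is classically \(C^\ell\) on \(\Theta^\circ\) with derivatives \(\int_0^T\partial_\theta^\nu f\cdot a_t\,dt\). These derivatives are bounded by \(C\int_0^T\norm{a_t}dt\), which has all moments, so \(I^a\in W^{\ell,q}\) a.s.

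For the martingale part, we first choose a version of \(\theta\mapsto I^M(\theta)\) that is jointly measurable in \((\theta,\omega)\). One way is to take \(L^2\)-limits of Riemann-type approximations; alternatively, we can deduce continuity from a Kolmogorov estimate, since by BDG,
\[
\mathbb E\abs{I^M(\theta)-I^M(\theta')}^{q}\lesssim \Bigl(\sup_t\norm{f(t,\theta)-f(t,\theta')}\Bigr)^{q}(C_MT)^{q/2}\lesssim\abs{\theta-\theta'}^q,
\]
where \(\ell\ge1\) gives the Lipschitz bound. We do the same for each \(J_\nu(\theta):=\int_0^T\partial^\nu_\theta f(t,\theta)\cdot dM_t\) with \(\abs{\nu}\le\ell-1\). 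For \(\abs\nu=\ell\) only a measurable version is available, so we take a jointly measurable one.

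To identify the weak derivatives, we test against \(\varphi\in C_c^\infty(\Theta^\circ)\) and apply the stochastic Fubini theorem, which is valid since the integrands are bounded and \(\varphi\) has compact support:
\[
\int_{\Theta^\circ} J_\nu(\theta)\,\partial_{\theta_j}\varphi(\theta)\,d\theta=\int_0^T\Bigl(\int_{\Theta^\circ}\partial^\nu_\theta f(t,\theta)\,\partial_{\theta_j}\varphi(\theta)\,d\theta\Bigr)\cdot dM_t=-\int_0^T\Bigl(\int\partial_{\theta_j}\partial^\nu_\theta f\,\varphi\,d\theta\Bigr)\cdot dM_t .
\]
The last expression equals \(-\int J_{\nu+e_j}\varphi\,d\theta\) a.s. The null set depends on \(\varphi\), so we restrict to a countable family \(\{\varphi_k\}\) dense in \(C_c^1\), which yields the weak derivative identities a.s. simultaneously. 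By Fubini and BDG,
\[
\mathbb E\int_{\Theta^\circ}\abs{J_\nu}^q\,d\theta\lesssim \abs{\Theta}\,C_M^{q/2}T^{q/2}\sup\abs{\partial^\nu f}^q<\infty ,
\]
so each \(J_\nu\in L^q(\Theta^\circ)\) a.s. Hence \(I\in W^{\ell,q}(\Theta^\circ)\) a.s. with the stated derivatives.

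For the regularity claim with \(q>\dim\Theta\), we use Morrey's embedding \(W^{\ell,q}\hookrightarrow C^{\ell-1,1-p/q}(\overline{\Theta^\circ})\), which is available because \(\Theta^\circ\) is a bounded Lipschitz domain (convex in our application). This gives a \(C^{\ell-1}(\Theta)\) modification. Its classical derivatives of order \(\le\ell-1\) agree a.e., hence everywhere by continuity, with the continuous versions of \(J_\nu\) obtained above, because two continuous functions that agree a.e. coincide; thus they are given by the same stochastic integrals. The main obstacle is the identification step: the measurability and choice of versions, and uniformity of the null set over test functions. The countable dense family together with stochastic Fubini, which requires only a jointly measurable bounded integrand, is how we handle it.
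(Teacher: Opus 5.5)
Your argument is correct, but it takes a different route from the paper. The paper never splits $Z$. It proves one moment bound, $\mathbb{E}|\int_0^T g\cdot dZ_t|^q\lesssim_q\sup_t\|g(t,\theta)\|^q$ for bounded deterministic $g$, using BDG together with the moment hypothesis on $a$. From this it deduces that the difference quotients of $\int_0^T\partial_\theta^\nu f\cdot dZ_t$ converge in $L^q(\Omega)$, locally uniformly in $\theta$, to $\int_0^T\partial_{\theta_a}\partial_\theta^\nu f\cdot dZ_t$. It reads this as identifying the weak derivatives and as giving $\mathbb{E}\|I\|_{W^{\ell,q}(\Theta^\circ)}^q<\infty$; Sobolev--Morrey then supplies the $C^{\ell-1}$ modification.

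That route is shorter. It avoids stochastic Fubini and Kolmogorov, and it produces directly the expectation bound that the later Sobolev--Morrey arguments rely on. However, it leaves implicit how $L^q(\Omega)$-differentiability at each fixed $\theta$ becomes an almost-sure statement about weak derivatives. That step really needs a jointly measurable version, testing against $\varphi\in C_c^\infty(\Theta^\circ)$ with Fubini on $\Omega\times\Theta^\circ$, and a countable family of test functions. The paper also does not explain why the derivatives of the Morrey representative agree, for each fixed $\theta$, with the stochastic integrals.

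Your proof makes exactly these points explicit:
\begin{itemize}
\item The pathwise treatment of the drift shows that Sobolev membership needs only $\int_0^T\|a_t\|\,dt<\infty$ a.s.; the moments of $a$ matter only for the expectation bound.
\item Stochastic Fubini with a countable $C^1$-dense family of test functions puts all the weak-derivative identities on a single null set.
\item The Kolmogorov-continuous versions of $J_\nu$ for $|\nu|\le\ell-1$ identify the classical derivatives pointwise.
\end{itemize}
The price is heavier machinery. Moreover, once you have those continuous versions the Morrey step is nearly redundant, since a continuous function whose weak first derivatives are continuous is classically $C^1$.

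One small point: the bound $\sup_t\|f(t,\theta)-f(t,\theta')\|\lesssim|\theta-\theta'|$ in your Kolmogorov estimate uses that the segment joining $\theta$ and $\theta'$ lies in $\Theta$. This holds in the paper's convex setting; for a general Lipschitz domain, apply Kolmogorov locally instead.
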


\begin{proof}
For any bounded deterministic integrand \(g(t,\theta)\), the moment assumption
on \(a\), the Burkholder--Davis--Gundy inequality, and the quadratic-variation
bound give
\[
\mathbb{E}\left[
    \left|\int_0^T g(t,\theta)\cdot dZ_t\right|^q
\right]
\lesssim_q
\sup_{0\leq t\leq T}\norm{g(t,\theta)}_{\mathbb{R}^r}^q.
\]
Hence, for every multi-index \(\nu\) with \(|\nu|\leq \ell\),
\[
\mathbb{E}\left[
    \left\|
        \int_0^T \partial_\theta^\nu f(t,\theta)\cdot dZ_t
    \right\|_{L^q(\Theta^\circ)}^q
\right]<\infty.
\]
For a coordinate vector \(e_a\) and \(|\nu|\leq \ell-1\), the same estimate
applied to the difference quotient gives
\[
\begin{aligned}
&\mathbb{E}\left[
\left|
\frac{1}{\varepsilon}
\left\{
\int_0^T \partial_\theta^\nu f(t,\theta+\varepsilon e_a)\cdot dZ_t
-
\int_0^T \partial_\theta^\nu f(t,\theta)\cdot dZ_t
\right\}
-
\int_0^T
\partial_{\theta_a}\partial_\theta^\nu f(t,\theta)\cdot dZ_t
\right|^q
\right]
\\
&\qquad\lesssim_q
\sup_{0\leq t\leq T}
\norm{
\frac{\partial_\theta^\nu f(t,\theta+\varepsilon e_a)
      -\partial_\theta^\nu f(t,\theta)}{\varepsilon}
-
\partial_{\theta_a}\partial_\theta^\nu f(t,\theta)
}_{\mathbb{R}^r}^q,
\end{aligned}
\]
which tends to zero as \(\varepsilon\to0\), locally uniformly in
\(\theta\in\Theta^\circ\). Thus the displayed stochastic integrals are the
weak \(\theta\)-derivatives of \(I\) through order \(\ell\), and
\[
\mathbb{E}\bigl[\|I\|_{W^{\ell,q}(\Theta^\circ)}^q\bigr]<\infty.
\]
If \(q>\dim\Theta\), the Sobolev--Morrey embedding on the bounded Lipschitz
domain \(\Theta^\circ\) yields a \(C^{\ell-1}\)-modification on \(\Theta\). The
weak derivatives of order at most \(\ell-1\) then agree with the corresponding
classical derivatives, which proves the final assertion.
\end{proof}
\endgroup

\begin{lemma} \label{lem:StochIntVanishesGeneral}
    Let \(\Theta^\circ\) be a bounded Lipschitz domain in a Euclidean
    space with its closure \(\Theta\). Let
    \(f:[0,T]\times\Theta\to\mathbb{R}^r\) be a function. Assume that,
    for each \(t\in[0,T]\), the map \(\theta\mapsto f(t,\theta)\) is
    differentiable on \(\Theta^\circ\), and that \(f\) and
    \(\partial_\theta f\) are uniformly continuous in \((t,\theta)\) on their
    respective domains. Then
    \begin{equation}
        \int_0^T f(t,\theta)\, d\Wbar_t \convas 0,
    \end{equation}
    as \(N\to\infty\), uniformly in \(\theta\in\Theta\).
\end{lemma}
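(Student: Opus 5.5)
We need to prove that $\sup_{\theta\in\Theta}\bigl|\int_0^T f(t,\theta)\cdot d\Wbar_t\bigr|\to0$ almost surely. The plan is a moment bound for the random field $I_N(\theta):=\int_0^T f(t,\theta)\cdot d\Wbar_t$, made uniform in $\theta$ through the Sobolev--Morrey embedding, followed by Borel--Cantelli. Two facts drive it: $f$ is deterministic, and $\Wbar=N^{-1/2}\,\mathbf B$ in law, where $\mathbf B$ is a standard $r$-dimensional Wiener process.

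\emph{Step 1 (pointwise moments).} For each $\theta$, $I_N(\theta)$ is centered Gaussian with variance
\[
N^{-1}\int_0^T\norm{f(t,\theta)}_{\mathbb{R}^r}^2\,dt .
\]
Since $f$ is bounded on $[0,T]\times\Theta$ (uniform continuity on a bounded domain extends it continuously to the compact closure), this gives, for every $q\ge1$,
\[
\mathbb{E}|I_N(\theta)|^q\lesssim_q N^{-q/2}\sup_{t,\theta}\norm{f}^q .
\]

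\emph{Step 2 (derivatives).} Apply \zcref{lem:differentiation-stochastic-integral} with $\ell=1$, $Z=\Wbar$ (so $a=0$, $M=\Wbar$, $\operatorname{tr}\langle M\rangle_t=rt/N\le rt$) and integrand $f$. This shows that $I_N\in W^{1,q}(\Theta^\circ)$ a.s., with weak derivatives $\partial_\theta I_N(\theta)=\int_0^T\partial_\theta f(t,\theta)\cdot d\Wbar_t$. The same Gaussian computation, using boundedness of $\partial_\theta f$, gives
\[
\mathbb{E}|\partial_\theta I_N(\theta)|^q\lesssim N^{-q/2}.
\]
Integrating over the bounded set $\Theta^\circ$ then yields
\[
\mathbb{E}\norm{I_N}_{W^{1,q}(\Theta^\circ)}^q\lesssim N^{-q/2}.
\]

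\emph{Step 3 (uniform bound and Borel--Cantelli).} Fix $q>\max\{\dim\Theta,2\}$. Morrey's embedding on the bounded Lipschitz domain gives
\[
\sup_{\theta\in\Theta}|I_N(\theta)|\lesssim\norm{I_N}_{W^{1,q}(\Theta^\circ)}
\]
for the continuous modification. Hence, by Chebyshev's inequality,
\[
P\Bigl(\sup_\theta|I_N(\theta)|>\varepsilon\Bigr)\lesssim\varepsilon^{-q}N^{-q/2}.
\]
Because $q>2$, the series $\sum_N N^{-q/2}$ converges, so Borel--Cantelli gives $\sup_\theta|I_N|\le\varepsilon$ eventually a.s. Taking $\varepsilon=1/k$ over countably many $k$ yields the almost sure uniform convergence to $0$.

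\emph{Main obstacle.} The delicate step is Step 2. We must check that the hypotheses of \zcref{lem:differentiation-stochastic-integral} are met: only first derivatives of $f$ are assumed, and these extend continuously and boundedly to the closure because of the uniform continuity. We must also make sure the embedding constant and the version of $I_N$ are independent of $N$. The constant depends only on $\Theta^\circ$ and $q$, so this holds once we work with the continuous modification supplied by that lemma. A fallback that avoids Sobolev spaces is Kolmogorov's continuity criterion applied to increments:
\[
\mathbb{E}|I_N(\theta)-I_N(\theta')|^q\lesssim N^{-q/2}|\theta-\theta'|^q ,
\]
which follows from the mean value theorem for $f$ along the segment joining $\theta$ and $\theta'$ (available since $\Theta^\circ$ is convex in the paper's setting). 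Combined with the quantitative (Garsia--Rodemich--Rumsey type) form of the criterion, this bounds $\mathbb{E}\sup_\theta|I_N|^q$ by $N^{-q/2}$, and Step 3 then applies unchanged.
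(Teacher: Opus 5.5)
Your proof is correct, but it takes a different route from the paper's.

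The paper works with a single trajectory. Via Sobolev--Morrey and BDG, it shows that \(\int_0^T f(t,\theta)\,d\Wvec[1]_t\) is a mean-zero \(C(\Theta)\)-valued random element with finite \(2m\)-th moment, hence Bochner integrable. It then notes that \(\int_0^T f(t,\theta)\,d\Wbar_t\) is the empirical mean of \(N\) i.i.d. copies of this element. The strong law of large numbers in the separable Banach space \(C(\Theta)\) (Ledoux--Talagrand, Cor.~7.10) then gives the almost sure uniform convergence directly.

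You instead estimate the averaged field \(I_N\) itself. You use \(\langle\Wbar\rangle_t=tI_r/N\) to get \(\mathbb{E}\sup_\theta|I_N(\theta)|^q\lesssim N^{-q/2}\), and conclude by Chebyshev and Borel--Cantelli with \(q>2\).

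What your version buys:
\begin{itemize}
\item It is more elementary and quantitative: it gives a moment rate and avoids the Banach-space SLLN.
\item It uses the independence of the \(\Wvec[i]\) only through the quadratic-variation scaling of \(\Wbar\).
\item It is essentially the argument the paper itself uses for \zcref{lem:StochIntVanishesCombined}(ii), specialised to a fixed integrand and a single index \(N\).
\end{itemize}

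What the paper's route buys: it needs only first-moment integrability of the single-path field in \(C(\Theta)\) and no rate. It would therefore survive with weaker moment control on the driving noise.

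As you note, your Kolmogorov/GRR fallback relies on convexity of \(\Theta^\circ\). The Sobolev route is the one that matches the lemma's stated generality of a bounded Lipschitz domain.
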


\begin{proof}
Choose an integer $m$ sufficiently large so that
$2m>\dim\Theta$. By the Sobolev--Morrey embedding theorem
\cite[Theorem~4.12]{adams2003sobolev}, applied on $\Theta^\circ$,
\begin{equation}
\mathbb{E}\qty[
\norm{
\int_0^T f(t,\theta)\,d\Wvec[1]_t
}_{C(\Theta)}^{2m}
]
\lesssim
\mathbb{E}\qty[
\norm{
\int_0^T f(t,\theta)\,d\Wvec[1]_t
}_{W^{1,2m}(\Theta^\circ)}^{2m}
].
\end{equation}

For $\ell=0,1$, by the Burkholder--Davis--Gundy inequality,
\begin{align*}
\mathbb{E}\qty[
\norm{
\int_0^T
\partial_\theta^\ell f(t,\theta)\,d\Wvec[1]_t
}_{L^{2m}(\Theta)}^{2m}
]
&=
\int_\Theta
\mathbb{E}\qty[
\abs{
\int_0^T
\partial_\theta^\ell f(t,\theta)\,d\Wvec[1]_t
}^{2m}
]\,d\theta \\
&\lesssim
\int_\Theta
\qty(
\int_0^T
\norm{\partial_\theta^\ell f(t,\theta)}_{\mathbb{R}^r}^2
\,dt
)^m
d\theta .
\end{align*}
Since $f$ and $\partial_\theta f$ are uniformly bounded on their
respective domains, the right-hand side is finite. Hence
\begin{equation}
\mathbb{E}\qty[
\norm{
\int_0^T f(t,\theta)\,d\Wvec[1]_t
}_{C(\Theta)}^{2m}
]
<\infty.
\end{equation}
 Moreover, for every $\theta\in\Theta$,
\begin{equation}
\mathbb{E}\qty[
\int_0^T f(t,\theta)\,d\Wvec[1]_t
]
=0 .
\end{equation}
Since $C(\Theta)$ is separable, the preceding integrability bound implies
that this $C(\Theta)$-valued random element is Bochner integrable with
mean zero. Therefore, the strong law of large numbers in a separable Banach
space \cite[Corollary~7.10]{ledoux1991probability} yields the desired
uniform convergence on $\Theta$.
\end{proof}

\begin{remark}
In the main text, the parameter space is written as
\(\Theta=\overline{\Theta^\circ}\), where \(\Theta^\circ\) is a bounded
convex open set. Since every bounded convex open set is a Lipschitz
domain, the assumptions of Lemma~\ref{lem:StochIntVanishesGeneral} are
satisfied.
\end{remark}

\begin{lemma}
\label{lem:StochIntVanishesCombined}
Let \(\Theta^\circ\) be a bounded Lipschitz domain in a Euclidean space
with its closure \(\Theta\). Let
\(f_n:[0,T]\times\Theta\to\mathbb{R}^r\), \(n\geq1\), be functions such that, for each \(t\in[0,T]\), the map
\(\theta\mapsto f_n(t,\theta)\) is differentiable on \(\Theta^\circ\).
Suppose that there exists a sequence \(a_n>0\) such that
\begin{equation}
    \sup_{(t,\theta)\in[0,T]\times\Theta}
    \norm{f_n(t,\theta)}_{\mathbb{R}^r}
    +
    \sup_{(t,\theta)\in[0,T]\times\Theta^\circ}
    \norm{\partial_\theta f_n(t,\theta)}_{\mathbb{R}^r}
    \leq a_n,
    \qquad n\geq1.
    \label{eq:combined-integrand-bound}
\end{equation}

\begin{itemize}
\item[(i)]
Let \(n=n(N)\) satisfy \(a_{n(N)}\to0\). Then
\begin{equation}
    \sqrt{N}
    \sup_{\theta\in\Theta}
    \abs{
        \int_0^T f_{n(N)}(t,\theta)\,d\Wbar_t
    }
    \convp 0 .
    \label{eq:combined-scaled-convergence}
\end{equation}

\item[(ii)]
Suppose that there exists an integer \(m>1\) such that
\(2m>\dim\Theta\) and
\begin{equation}
    \sum_{n=1}^\infty a_n^{2m}<\infty .
    \label{eq:combined-double-summability}
\end{equation}
Then
\begin{equation}
    \lim_{K\to\infty}
    \sup_{\substack{N\geq K\\ n\geq K}}
    \sup_{\theta\in\Theta}
    \abs{
        \int_0^T f_n(t,\theta)\,d\Wbar_t
    }
    =0
    \quad\text{a.s.}
    \label{eq:combined-double-convergence}
\end{equation}

\item[(iii)]
Suppose that there exists an integer \(m>1\) such that
\(4m>\dim\Theta\) and
\begin{equation}
    \sum_{n=1}^\infty a_n^{4m}<\infty .
    \label{eq:combined-empirical-summability}
\end{equation}
Then
\begin{equation}
    \lim_{K\to\infty}
    \sup_{\substack{N\geq K\\ n\geq K}}
    \frac{1}{N}\sum_{i=1}^N
    \sup_{\theta\in\Theta}
    \abs{
        \int_0^T f_n(t,\theta)\,d\Wvec[i]_t
    }^2
    =0
    \quad\text{a.s.}
    \label{eq:combined-empirical-convergence}
\end{equation}
\end{itemize}
The same conclusions hold for vector- or matrix-valued integrands with
finitely many components, by applying the above results componentwise.
\end{lemma}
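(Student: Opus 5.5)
The three parts rest on one moment estimate. For a single trajectory put
\[
G^{(i)}_n(\theta):=\int_0^T f_n(t,\theta)\cdot d\Wvec[i]_t .
\]
Since \(f_n\) is deterministic, the process \(\theta\mapsto \partial_\theta G^{(i)}_n(\theta)\) is the stochastic integral of \(\partial_\theta f_n\), exactly as in \zcref{lem:differentiation-stochastic-integral}. The Burkholder--Davis--Gundy inequality, applied pointwise in \(\theta\) and integrated over the bounded set \(\Theta^\circ\), then gives for every \(q\ge2\)
\[
\mathbb{E}\bigl[\|G^{(i)}_n\|_{W^{1,q}(\Theta^\circ)}^q\bigr]\lesssim a_n^q .
\]
When \(q>\dim\Theta\), the Sobolev--Morrey embedding turns this into
\[
\mathbb{E}\bigl[\sup_{\theta\in\Theta}|G^{(i)}_n(\theta)|^q\bigr]\lesssim a_n^q .
\]
The same argument applies to \(\bar G_n:=\int_0^T f_n\cdot d\Wbar_t\). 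Here \(\Wbar\) is a Brownian motion with covariance \(N^{-1}t\,I_r\), so BDG yields the extra factor \(N^{-q/2}\), namely
\[
\mathbb{E}\bigl[\sup_{\theta\in\Theta}|\bar G_n(\theta)|^q\bigr]\lesssim N^{-q/2}a_n^q .
\]
The constants depend only on \(q\), \(T\), \(r\) and \(\Theta\).

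\emph{Part (i).} Take \(q=2m>\dim\Theta\). Then
\[
\mathbb{E}\bigl[(\sqrt N\sup_\theta|\bar G_{n(N)}|)^{2m}\bigr]\lesssim a_{n(N)}^{2m}\to0,
\]
and Markov's inequality gives convergence in probability.

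\emph{Part (ii).} For \(\varepsilon>0\) I use Markov's inequality and a union bound over pairs \((N,n)\):
\[
\sum_{N,n}\mathbb{P}\Bigl(\sup_\theta|\bar G_n|>\varepsilon\Bigr)\lesssim \varepsilon^{-2m}\sum_N N^{-m}\sum_n a_n^{2m}<\infty .
\]
This uses \(m>1\) and \eqref{eq:combined-double-summability}. Borel--Cantelli then shows that, almost surely, only finitely many pairs \((N,n)\) have \(\sup_\theta|\bar G_n|>\varepsilon\). Hence the supremum over \(N,n\ge K\) is at most \(\varepsilon\) for all large \(K\). Letting \(\varepsilon\downarrow0\) along a countable sequence gives \eqref{eq:combined-double-convergence}.

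\emph{Part (iii).} Put \(\xi_{i,n}:=\sup_\theta|G^{(i)}_n(\theta)|^2\). For fixed \(n\), the variables \(\xi_{1,n},\xi_{2,n},\dots\) are i.i.d., and by the moment bound with \(q=4m\) we have \(\mathbb{E}\xi_{i,n}\lesssim a_n^2\) and \(\mathbb{E}\xi_{i,n}^{2m}\lesssim a_n^{4m}\). I split
\[
\frac1N\sum_i\xi_{i,n}=\mathbb{E}\xi_{1,n}+\frac1N\sum_i(\xi_{i,n}-\mathbb{E}\xi_{1,n}).
\]
The summability condition forces \(a_n\to0\), so the first term tends to \(0\). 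For the centered average, the Marcinkiewicz--Zygmund (or Rosenthal) inequality gives
\[
\mathbb{E}\Bigl|\frac1N\sum_{i=1}^N(\xi_{i,n}-\mathbb{E}\xi_{1,n})\Bigr|^{2m}\lesssim N^{-m}a_n^{4m}.
\]
The same double-sum Borel--Cantelli argument as in (ii) applies, using \(\sum_N N^{-m}<\infty\) and \eqref{eq:combined-empirical-summability}. This gives \eqref{eq:combined-empirical-convergence}.

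\emph{Main obstacle.} The delicate step is the uniform-in-\(\theta\) moment bound with explicit dependence on \(a_n\) and \(N\). One must check that the Sobolev--Morrey constant depends only on \(\Theta^\circ\) and \(q\), and that BDG in \(W^{1,q}\) is legitimate. The latter needs the weak derivative of the random field to be the stochastic integral of \(\partial_\theta f_n\). I would take this from \zcref{lem:differentiation-stochastic-integral} with \(\ell=1\). Its continuity hypotheses on \(f_n\) may need to be weakened to boundedness, justified by the difference-quotient argument in its proof.
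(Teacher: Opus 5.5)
Your proposal is correct and follows the paper's proof essentially step for step: the Sobolev--Morrey plus Burkholder--Davis--Gundy bound $\mathbb{E}[\sup_\theta|\cdot|^{2m}]\lesssim a_n^{2m}N^{-m}$ combined with Markov for (i), the double-sum Borel--Cantelli over pairs $(N,n)$ using $m>1$ for (ii), and the i.i.d.\ centred $2m$-th moment bound $\lesssim a_n^{4m}N^{-m}$ together with $\mathbb{E}X_{i,n}\lesssim a_n^2\to0$ for (iii). Your explicit appeal to Marcinkiewicz--Zygmund/Rosenthal, and your remark that the weak-derivative identification must be run under mere boundedness rather than the continuity hypotheses of the differentiation lemma, only make explicit what the paper leaves implicit. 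That second point is relevant because the paper applies this lemma to integrands $\nabla_\theta h_n-\nabla_\theta h$ that are step functions in $t$.
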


\begin{proof}
For (i), put
\begin{equation}
    I_N(\theta)
    :=
    \int_0^T f_{n(N)}(t,\theta)\,d\Wbar_t .
\end{equation}
Taking an integer \(m\) such that \(2m>\dim\Theta\), the
Sobolev--Morrey embedding and the Burkholder--Davis--Gundy inequality,
together with \(\langle\Wbar\rangle_t=N^{-1}tI_r\), give
\begin{align}
    \mathbb{E}\qty[
        \norm{I_N}_{L^\infty(\Theta)}^{2m}
    ]
    &\lesssim
    \mathbb{E}\qty[
        \norm{I_N}_{W^{1,2m}(\Theta^\circ)}^{2m}
    ] \notag\\
    &\lesssim
    \frac{1}{N^m}
    \int_{\Theta^\circ}
    \qty(
        \int_0^T
        \left\{
            \norm{f_{n(N)}(t,\theta)}_{\mathbb{R}^r}^2
            +
            \norm{\partial_\theta f_{n(N)}(t,\theta)}_{\mathbb{R}^r}^2
        \right\}
        dt
    )^m
    d\theta
    \lesssim
    \frac{a_{n(N)}^{2m}}{N^m}.
    \label{eq:combined-scaled-moment}
\end{align}
Hence
\begin{equation}
    \mathbb{E}\qty[
        \qty(
            \sqrt{N}\norm{I_N}_{L^\infty(\Theta)}
        )^{2m}
    ]
    \lesssim
    a_{n(N)}^{2m}
    \longrightarrow0,
\end{equation}
which proves (i). For (ii), set
\begin{equation}
    I_{N,n}(\theta)
    :=
    \int_0^T f_n(t,\theta)\,d\Wbar_t .
\end{equation}
The same argument gives
\begin{equation}
    \mathbb{E}\qty[
        \norm{I_{N,n}}_{L^\infty(\Theta)}^{2m}
    ]
    \lesssim
    \frac{a_n^{2m}}{N^m}.
    \label{eq:combined-double-moment}
\end{equation}
Therefore, for every \(\varepsilon>0\), Markov's inequality yields
\begin{align}
    \sum_{N=1}^\infty\sum_{n=1}^\infty
    \mathbb{P}\qty(
        \sup_{\theta\in\Theta}
        \abs{I_{N,n}(\theta)}
        >
        \varepsilon
    )
    &\lesssim
    \frac{1}{\varepsilon^{2m}}
    \sum_{N=1}^\infty
    \frac{1}{N^m}
    \sum_{n=1}^\infty
    a_n^{2m}
    <\infty .
\end{align}
By the Borel--Cantelli lemma, for each \(\varepsilon>0\), almost surely
only finitely many pairs \((N,n)\) satisfy
\begin{equation}
    \sup_{\theta\in\Theta}
    \abs{I_{N,n}(\theta)}
    >
    \varepsilon .
\end{equation}
Taking \(\varepsilon\) over a countable sequence decreasing to zero proves
(ii). For (iii), put
\begin{equation}
    X_{i,n}
    :=
    \sup_{\theta\in\Theta}
    \abs{
        \int_0^T f_n(t,\theta)\,d\Wvec[i]_t
    }^2 .
\end{equation}
The same Sobolev--Morrey and Burkholder--Davis--Gundy estimates give
\begin{equation}
    \mathbb{E}\left[X_{i,n}^{2m}\right]
    \lesssim
    a_n^{4m},
    \qquad
    \mathbb{E}\left[X_{i,n}\right]
    \lesssim
    a_n^2 .
    \label{eq:combined-empirical-moments}
\end{equation}
Since \(X_{i,n}\), \(i\geq1\), are i.i.d., we have
\begin{equation}
    \mathbb{E}\qty[
        \abs{
            \frac{1}{N}\sum_{i=1}^N
            \left(
                X_{i,n}-\mathbb{E}[X_{i,n}]
            \right)
        }^{2m}
    ]
    \lesssim
    \frac{a_n^{4m}}{N^m}.
    \label{eq:combined-empirical-centered-moment}
\end{equation}
Thus, by Markov's inequality,
\begin{equation}
    \sum_{N=1}^\infty\sum_{n=1}^\infty
    \mathbb{P}\qty(
        \abs{
            \frac{1}{N}\sum_{i=1}^N
            \left(
                X_{i,n}-\mathbb{E}[X_{i,n}]
            \right)
        }
        >
        \varepsilon
    )
    <\infty .
\end{equation}
The Borel--Cantelli lemma therefore gives
\begin{equation}
    \lim_{K\to\infty}
    \sup_{\substack{N\geq K\\ n\geq K}}
    \abs{
        \frac{1}{N}\sum_{i=1}^N
        \left(
            X_{i,n}-\mathbb{E}[X_{i,n}]
        \right)
    }
    =0
    \quad\text{a.s.}
\end{equation}
Together with \(\mathbb{E}[X_{i,n}]\lesssim a_n^2\to0\), this proves (iii).
\end{proof}

\section*{Acknowledgements}
The authors thank Hiroki Nemoto for valuable discussions, helpful comments on earlier versions of the manuscript, and assistance in checking several calculations.

\bibliographystyle{amsplain}
\bibliography{references}
\end{document}